\DeclareMathOperator{\sHom}{\mathscr{H}\text{\kern -3pt {\calligra\large om}}\,}
\DeclareMathOperator{\sRHom}{\mathscr{RH}\text{\kern -3pt {\calligra\large om}}\,}
\DeclareMathOperator{\sQuot}{\mathscr{Q}\text{\kern -3pt {\calligra\large uot}}\,}
\patchcmd{\@settitle}{\uppercasenonmath\@title}{}{}{}
\newcommand{\nc}{\newcommand}
\newtheorem{theorem}[subsection]{Theorem}
\newtheorem{proposition}[subsection]{Proposition}
\newtheorem{lemma}[subsection]{Lemma}
\newtheorem{corollary}[subsection]{Corollary}
\newtheorem{conjecture}[subsection]{Conjecture}
\newtheorem{definition}[subsection]{Definition}
\newtheorem{claim}[subsection]{Claim}
\nc{\fa}{{\mathfrak{a}}}
\nc{\fb}{{\mathfrak{b}}}
\nc{\fd}{{\mathfrak{d}}}
\nc{\fg}{{\mathfrak{g}}}
\nc{\fh}{{\mathfrak{h}}}
\nc{\fj}{{\mathfrak{j}}}
\nc{\fn}{{\mathfrak{n}}}
\nc{\fm}{{\mathfrak{m}}}
\nc{\fu}{{\mathfrak{u}}}
\nc{\fp}{{\mathfrak{p}}}
\nc{\fr}{{\mathfrak{r}}}
\nc{\ft}{{\mathfrak{t}}}
\nc{\fsl}{{\mathfrak{sl}}}
\nc{\fgl}{{\mathfrak{gl}}}
\nc{\hsl}{{\widehat{\mathfrak{sl}}}}
\nc{\hgl}{{\widehat{\mathfrak{gl}}}}
\nc{\hg}{{\widehat{\mathfrak{g}}}}
\nc{\chg}{{\widehat{\mathfrak{g}}}{}^\vee}
\nc{\hn}{{\widehat{\mathfrak{n}}}}
\nc{\chn}{{\widehat{\mathfrak{n}}}{}^\vee}
\nc{\Mod}{{\textrm{Mod}}}
\nc{\wGL}{{\widehat{GL}^+}}
\nc{\BA}{{\mathbb{A}}}
\nc{\BC}{{\mathbb{C}}}
\nc{\BG}{{\mathbb{G}}}
\nc{\BM}{{\mathbb{M}}}
\nc{\BN}{{\mathbb{N}}}
\nc{\BF}{{\mathbb{F}}}
\nc{\BH}{{\mathbb{H}}}
\nc{\BP}{{\mathbb{P}}}
\nc{\BQ}{{\mathbb{Q}}}
\nc{\BR}{{\mathbb{R}}}
\nc{\BZ}{{\mathbb{Z}}}
\nc{\ff}{{\mathbb{F}}}
\nc{\kk}{{\mathbb{K}}}
\nc{\kko}{{\mathbb{K}}}
\nc{\coh}{{\text{Coh}}}
\nc{\CA}{{\mathcal{A}}}
\nc{\CC}{{\mathcal{C}}}
\nc{\CB}{{\mathcal{B}}}
\nc{\DD}{{\mathcal{D}}}
\nc{\CE}{{\mathcal{E}}}
\nc{\CF}{{\mathcal{F}}}
\nc{\tCF}{{\widetilde{\CF}}}
\nc{\tCM}{{\widetilde{\CM}}}
\nc{\tCT}{{\widetilde{\CT}}}
\nc{\oCF}{{\bar{\CF}}}
\nc{\CG}{{\mathcal{G}}}
\nc{\CL}{{\mathcal{L}}}
\nc{\CK}{{\mathcal{K}}}
\nc{\CI}{{\mathcal{I}}}
\nc{\CM}{{\mathcal{M}}}
\nc{\CH}{{\mathcal{H}}}
\nc{\CN}{{\mathcal{N}}}
\nc{\CO}{{\mathcal{O}}}
\nc{\CP}{{\mathcal{P}}}
\nc{\CR}{{\mathcal{R}}}
\nc{\CQ}{{\mathcal{Q}}}
\nc{\CS}{{\mathcal{S}}}
\nc{\CT}{{\mathcal{T}}}
\nc{\tCU}{{\widetilde{\CU}}}
\nc{\CU}{{\mathcal{U}}}
\nc{\CV}{{\mathcal{V}}}
\nc{\CW}{{\mathcal{W}}}
\nc{\CZ}{{\mathcal{Z}}}
\nc{\tpsi}{{\widetilde{\Psi}}}
\nc{\wpi}{{\widetilde{\pi}}}
\nc{\Ker}{{\text{Ker }}}
\nc{\Coker}{{\text{Coker }}}
\nc{\CX}{{\mathcal{X}}}
\nc{\tCX}{{\widetilde{\mathcal{X}}}}
\nc{\CY}{{\mathcal{Y}}}
\nc{\tCY}{{\widetilde{\mathcal{Y}}}}
\nc{\tN}{{\widetilde{\CN}}}
\nc{\pN}{{\BP\widetilde{\CN}}}
\nc{\tT}{{T}}
\nc{\fq}{{\mathfrak{q}}}
\nc{\fC}{{\mathfrak{C}}}
\nc{\fJ}{{\mathfrak{J}}}
\nc{\fG}{{\mathfrak{G}}}
\nc{\fL}{{\mathfrak{L}}}
\nc{\fZ}{{\mathfrak{Z}}}
\nc{\fU}{{\mathfrak{U}}}
\nc{\fV}{{\mathfrak{V}}}
\nc{\fS}{{\mathfrak{S}}}
\nc{\od}{{\overline{d}}}
\nc{\rg}{{\textrm{R}\Gamma}}
\nc{\erg}{{\emph{R}\Gamma}}
\nc{\id}{{\textrm{Id}}}
\nc{\rhom}{{\textrm{RHom}}}
\def\Ext{\textrm{Ext}}
\def\Hom{\textrm{Hom}}
\def\RHom{\textrm{RHom}}
\def\e{\varepsilon}
\def\tCF{\widetilde{\CF}}
\def\fsp{\mathfrak{sp}}
\def\fgl{\mathfrak{gl}}
\def\fb{\mathfrak{b}}
\def\fh{\mathfrak{h}}
\def\fg{\mathfrak{g}}
\def\fp{\mathfrak{p}}
\def\ft{\mathfrak{t}}
\def\loccit{\emph{loc. cit. }}
\def\loccitt{\emph{loc. cit.}}
\def\Id{\text{Id}}
\def\sgn{\text{sign }}
\def\Tan{{\text{Tan}}}
\def\eTan{{\emph{Tan}}}
\def\Hilb{{\text{Hilb}}}
\def\eHilb{{\emph{Hilb}}}
\def\ch{{\text{ch}}}
\def\ech{{\emph{ch}}}
\def\td{{\text{td}}}
\def\trans{{\text{tr}}}
\def\Pic{{\text{Pic}}}
\def\heis{{\text{Heis}}}
\def\vir{{\text{Vir}}}
\def\tr{{\text{tr}}}
\def\sm{{\text{small}}}
\def\big{{\text{big}}}
\def\as{{A^*(S)}}
\def\ah{{A^*(\Hilb)}}
\def\ahs{{A^*(\Hilb \times S)}}
\def\eah{{A^*(\eHilb)}}
\def\eahs{{A^*(\eHilb \times S)}}
\def\eahss{{A^*(\eHilb \times S^2)}}
\begin{document}

\title[Lehn's formula in Chow and conjectures of Beauville and Voisin]{\large{\textbf{LEHN'S FORMULA IN CHOW AND CONJECTURES OF BEAUVILLE AND VOISIN}}}
\author[Davesh Maulik, Andrei Negu\cb t]{Davesh Maulik, Andrei Negu\cb t}

\address{D.M. Massachusetts Institute of Technology, Department of Mathematics, 77 Massachusetts Avenue, Cambridge, MA 02139, US}
\email{maulik@mit.edu}

\address{A.N. Massachusetts Institute of Technology, Department of Mathematics, 77 Massachusetts Avenue, Cambridge, MA 02139, US}
\address{Simion Stoilow Institute of Mathematics, Bucharest, Romania}
\email{andrei.negut@gmail.com}


\maketitle

\begin{abstract} The Beauville-Voisin conjecture for a hyperk\"ahler manifold $X$ states that the subring of the Chow ring $A^*(X)$ generated by divisor classes and Chern characters of the tangent bundle injects into the cohomology ring of $X$. We prove a weak version of this conjecture when $X$ is the Hilbert scheme of points on a K3 surface, for the subring generated by divisor classes and tautological classes.  This in particular implies the weak splitting conjecture of Beauville for these geometries.  In the process, we extend Lehn's formula and the Li-Qin-Wang $W_{1+\infty}$ algebra action from cohomology to Chow groups, for the Hilbert scheme of an arbitrary smooth projective surface $S$.

\end{abstract}

\section{Introduction}

\subsection{} \label{sub:intro main} 

We will work with smooth algebraic varieties $X$ over an algebraically closed field of characteristic 0, henceforth denoted by $\BC$. For such a variety $X$, we will write $A^*(X)$ and $H^*(X)$ for its Chow group and even-degree cohomology group with $\BQ$--coefficients, respectively. When $X = \Hilb_n(S)$ is the Hilbert scheme of $n$ points on a K3 surface $S$, a significant source of elements of $A^*(X)$ is given by the universal subscheme:
\begin{equation}
\label{eqn:first diag}
\xymatrix{\CZ_n \ar@{^{(}->}[r] & \Hilb_n(S) \times S \ar[d]_{\pi_1} \ar[rd]^{\pi_2} & \\ & \Hilb_n(S) & S}
\end{equation}
We define a small tautological class to be any element of $A^*(\Hilb_n(S))$ of the form:
\begin{equation}
\label{eqn:intro small}
\pi_{1*} \Big[ \ch_{k}(\CO_{\CZ_n}) \cdot \pi_2^*(\gamma) \Big] \qquad \forall k \in \BN, \gamma \in R(S)
\end{equation}
where $R(S) \subset A^*(S)$ is the subring generated by divisor classes. Our main result is: \\

\begin{theorem}
\label{thm:main}
	
The cycle class map $A^*(\eHilb_n(S)) \rightarrow H^*(\eHilb_n(S))$ is injective on the subring generated by small tautological classes, for any K3 surface $S$ and $n \in \BN$. \\
	
\end{theorem}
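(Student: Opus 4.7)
The plan is to deduce the injectivity from the Fock-space formalism that the paper establishes for $\bigoplus_n A^*(\Hilb_n(S))$, translating the Chow-versus-cohomology comparison into a question about the ``ground'' intersection theory on $S$. The two key ingredients are the Chow-theoretic Lehn formula, which expresses small tautological classes via Nakajima creation operators, and the Heisenberg commutation relations in Chow, whose structure constants are controlled by the intersection pairing on $R(S)$ --- a pairing which matches cohomology by the Beauville--Voisin theorem for K3 surfaces.

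Concretely, by the Chow-theoretic Lehn formula any small tautological class in $A^*(\Hilb_n(S))$ belongs to the subspace $\BF_n^R \subset A^*(\Hilb_n(S))$ spanned by vectors of the form $\fq_{k_1}(\gamma_1) \cdots \fq_{k_m}(\gamma_m) |0\rangle$, with $k_i \geq 1$, $\gamma_i \in R(S)$ and $\sum_i k_i = n$; here $|0\rangle$ denotes the vacuum in $A^*(\Hilb_0(S)) = \BQ$ and $\fq_k(\gamma)$ are the Nakajima creation operators. The subring generated by small tautological classes is therefore contained in $\BF_n^R$, and it suffices to check that the cycle class map restricted to $\BF_n^R$ is injective.

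For this, observe that the Heisenberg commutation relations $[\fq_k(\alpha), \fq_\ell(\beta)] = k\, \delta_{k+\ell, 0} \int_S \alpha \cdot \beta$, together with the vanishing of annihilation operators on the vacuum, give a polynomial-algebra presentation of $\BF_n^R$ whose structure constants only involve pairings $\int_S \alpha \cdot \beta$ for $\alpha, \beta \in R(S)$. By Beauville--Voisin these pairings agree in Chow and cohomology, so the abstract presentation of $\BF_n^R$ is identical in the two theories. In cohomology, Nakajima's basis theorem guarantees that the monomials $\fq_{k_1}(\gamma_1) \cdots \fq_{k_m}(\gamma_m) |0\rangle$, for $\gamma_i$ running through a basis of $R(S) \subset H^*(S)$, are linearly independent; the same monomials must therefore also be linearly independent in Chow, and the cycle class map restricted to $\BF_n^R$ is an isomorphism onto the corresponding cohomological subspace.

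The main obstacle lies in the very construction of the Chow-theoretic Lehn formula and Heisenberg / $W_{1+\infty}$ action, which is the technical heart of the paper; once these tools are available, the K3 hypothesis enters only through the Beauville--Voisin theorem governing $R(S)$, and the propagation from the base surface to each Hilbert scheme is essentially formal from the Fock-space presentation.
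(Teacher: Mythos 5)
There is a genuine gap: the containment of the ring generated by small tautological classes in the Heisenberg Fock space $\BF_n^R$ built on $R(S)$ is false, and this failure is precisely what the paper's actual proof is designed to circumvent. Lehn's formula in Chow expresses the multiplication operator $\fG_k$ through sums of $:\fq_{n_1}\cdots\fq_{n_k}:(\Delta_{12\ldots k})$, and the small diagonal of $S^k$ decomposes (via the Beauville--Voisin relations \eqref{eqn:bv 5}, \eqref{eqn:bv 6}) into terms each still carrying a factor $\Delta_{ij}$. Writing $\Delta = \Delta^\tr + c_1 + c_2 + \sum_i l_{(i)1}l^{(i)}_2$, the class $\Delta^\tr$ admits no decomposition as a sum of products of pullbacks from the two factors of $S\times S$ --- not in $A^*(S\times S)$, and not even in cohomology with factors constrained to $R(S)$, since its image there is the transcendental K\"unneth component. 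Consequently $\fG_k(\gamma)$ does not lie in $U(\heis)$ for the Heisenberg algebra on $R(S)$, and iterated products of small tautological classes genuinely leave the Fock space: for instance $\big(\pi_{1*}\ch_3(\CO_{\CZ_n})\big)^2$ produces contributions of the form $\fq_1(1)^{n-2}\,\fq_1\fq_1(\Delta^\tr)\,v$, which lie outside the span of the monomials $\fq_{k_1}(\gamma_1)\cdots\fq_{k_m}(\gamma_m)v$ with $\gamma_i\in R(S)$ (this is already visible in cohomology for $n=2$, where the square of a divisor class hits the transcendental part of $H^4(\Hilb_2)$). So your first step, reducing Theorem \ref{thm:main} to injectivity on $\BF_n^R$, does not go through.

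The paper repairs this by adjoining the Virasoro operators $L_n = \frac 12\sum_{a+b=n}:\fq_a\fq_b:(\Delta^\tr)$ and working with the module $V_\sm$ generated by the vacuum under $\heis\times\vir$; Proposition \ref{prop:in the algebra} shows that multiplication by any small tautological class lies in $U(\heis\times\vir)$. The price is that injectivity of the cycle class map on $V_\sm$ is no longer the essentially formal irreducibility of a Heisenberg Fock module with non-degenerate pairing: one must show that the lowest-weight $\vir$--module of weight $0$ and central charge $b=\langle\Delta^\tr,\Delta\rangle\in\{2,\ldots,21\}$ generated by $v$ has no proper submodule besides the one generated by $L_1v=0$, and this requires the Feigin--Fuchs classification of Verma modules over the Virasoro algebra. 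Your Heisenberg argument (non-degeneracy of the pairing on $R(S)$ forcing linear independence of the creation monomials, compatibility with the cycle class map) is correct as far as it goes and reappears inside the paper's proof of Proposition \ref{prop:rep theory}, but it only controls the Heisenberg directions; the non-formal representation-theoretic input on the Virasoro side is missing from the proposal.
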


\noindent This result is motivated by the following conjecture of Beauville and Voisin (\cite{V}): \\

\begin{conjecture}
\label{conj:bv}

For any hyperk\"ahler $X$, the cycle class map $A^*(X) \rightarrow H^*(X)$ is injective on the subring generated by divisor classes and Chern classes of $T_X$. \\

\end{conjecture}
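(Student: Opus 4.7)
The plan is to translate the theorem into an assertion about the Chow-theoretic Heisenberg/Nakajima Fock space $\bigoplus_n A^*(\Hilb_n(S))$ and reduce it to the classical Beauville-Voisin theorem for the K3 surface $S$ itself. The starting point is the Chow-level Lehn formula and $W_{1+\infty}$-algebra action promised in the abstract: granted these, every small tautological class $\pi_{1*}\big[\ch_k(\CO_{\CZ_n}) \cdot \pi_2^*(\gamma)\big]$ should be represented as a specific polynomial in Nakajima creation operators $\fq_{-m}(\alpha)$ applied to the vacuum $1 \in A^0(\Hilb_0(S))$, in which each label $\alpha$ is an intersection product in $A^*(S)$ built out of $\gamma$ and Chern classes of $T_S$.

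The key observation is that for $\gamma \in R(S)$, all such labels $\alpha$ again lie in $R(S)$: this is because $R(S)$ is a subring of $A^*(S)$ and, by the Beauville-Voisin theorem for K3 surfaces, $c_2(T_S)$ is a rational multiple of the BV class $o_S$ (the class of a point lying on any rational curve in $S$), and therefore belongs to $R(S)$. Consequently, the subring $V_n \subset A^*(\Hilb_n(S))$ generated by small tautological classes sits inside the $\BQ$-linear span of monomials
\begin{equation*}
\fq_{-m_1}(\alpha_1) \cdots \fq_{-m_\ell}(\alpha_\ell) \cdot 1, \qquad \alpha_i \in R(S), \ \ \sum_i m_i = n.
\end{equation*}
The same Beauville-Voisin theorem also guarantees that the cycle class map is injective on $R(S)$, so the labels $\alpha_i$ are faithfully recorded by their cohomology classes.

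To finish, I would use that the Nakajima creation operators are defined by explicit correspondences and therefore commute with the cycle class map. The image in $H^*(\Hilb_n(S))$ of the displayed Chow monomials is then the corresponding set of cohomological Nakajima monomials, which are linearly independent since they constitute part of the Nakajima PBW basis of the Fock space, indexed by multi-partitions coloured by any fixed basis of $H^*(S)$. Hence the cycle class map is injective on the Chow span of these monomials, and in particular on $V_n$. The main obstacle is not this final PBW argument, but rather the first step: lifting the combinatorial formulas of Lehn and of Li-Qin-Wang from cohomology to Chow in a way that preserves their algebraic structure, and verifying carefully that the resulting operators on the Chow Fock space preserve the subspace labelled by $R(S)$. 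This Chow-theoretic lift is where the bulk of the work of the paper must lie.
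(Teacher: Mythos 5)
The statement you are trying to prove is Conjecture \ref{conj:bv}, which the paper does \emph{not} prove: it is an open conjecture, of which the paper establishes only the weak version (Theorem \ref{thm:main}) for the subring generated by \emph{small} tautological classes on $\Hilb_n(S)$. So there is no ``paper proof'' to match; the relevant comparison is with the paper's partial results, and against those your proposal has several genuine gaps.

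First, your argument is confined to $X = \Hilb_n(S)$, whereas the conjecture is for arbitrary hyperk\"ahler $X$. Second, even for $\Hilb_n(S)$ you never engage with the Chern classes of $T_{\Hilb_n(S)}$ themselves. By Proposition \ref{prop:chern tangent}, $\ch(T_{\Hilb_n})$ involves the product $\ch(\CO_{\CZ_n})\ch(\CO_{\CZ_n})^\vee$ pushed forward from $\Hilb_n \times S$, i.e.\ a \emph{big} tautological class in the sense of \eqref{eqn:intro big}, not a small one. The paper shows (Propositions \ref{prop:prop 2}, \ref{prop:prop 3}, \ref{prop:equiv}) that controlling these classes is equivalent to Voisin's Conjecture \ref{conj:v}, i.e.\ to the Kimura relation \eqref{eqn:yin} in $A^*(S^{2(b+1)})$, which is open. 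Your proposal silently assumes this difficulty away.

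Third, and most concretely, the technical heart of your argument fails: it is not true that the small tautological classes lie in the span of monomials $\fq_{-m_1}(\alpha_1)\cdots\fq_{-m_\ell}(\alpha_\ell)\cdot 1$ with all labels $\alpha_i \in R(S)$. The Chow-level Lehn/Li--Qin--Wang formulas express $\fG_k(\gamma)$ through operators $\fq_{n_1}\cdots\fq_{n_k}(\Delta_{12\ldots k}\cdot \gamma)$, as in \eqref{eqn:first}. The Beauville--Voisin relation \eqref{eqn:bv 6} reduces the small diagonal to \emph{pairwise} diagonals $\Delta_{ij}$, but $\Delta_{ij}$ contains the transcendental part $\Delta^{\tr}$ of \eqref{eqn:transcendental}, which is \emph{not} a sum of products of classes pulled back from the two factors. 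Consequently the operators $\fq_m\fq_n(\Delta^{\tr})$ genuinely appear, and your coloured-partition PBW basis does not see them. This is precisely why the paper introduces the Virasoro operators $L_n = \tfrac12\sum_{a+b=n}:\fq_a\fq_b:(\Delta^{\tr})$, proves that $V_{\sm}$ is a lowest-weight $\heis\times\vir$ module, and then invokes the Feigin--Fuchs classification together with Schur's lemma to get injectivity — a representation-theoretic irreducibility argument, not a linear-independence-of-monomials argument. Relatedly, your final step implicitly requires injectivity of the cycle class map on the subring of $A^*(S^k)$ generated by divisors \emph{and diagonals}, which is again exactly Voisin's Conjecture \ref{conj:v}, not the Beauville--Voisin theorem for $S$ alone. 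You have correctly identified that the Chow-theoretic lift of Lehn and Li--Qin--Wang is a major part of the work, but the lift alone does not close the argument along the route you describe.
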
 

\noindent Let us henceforth specialize to $X = \Hilb_n(S)$ for a K3 surface $S$ and any $n \in \BN$. Then our Theorem \ref{thm:main} implies the weak splitting conjecture (\cite{B2}) of Beauville for arbitrary $n$ (because of Proposition \ref{prop:divisor}).  The latter is a weaker version of Conjecture \ref{conj:bv}, where one only considers the subring of $A^*(X)$ generated by divisor classes. \\

\noindent Voisin proved Conjecture \ref{conj:bv} for $\Hilb_n(S)$ with $n \leq 2b+1$, where $b$ is the rank of the transcendental lattice of $S$. The upper bound on $n$ was improved to $(b+1)(b+2)$ by \cite{FT}, in relation with other conjectures on Chow groups of algebraic varieties (where they also proved the weak splitting property for $n < 506$).  Yin (\cite{Y}) showed Conjecture \ref{conj:bv} and a related conjecture  of Voisin \ref{conj:v} hold for all $n$ when the surface $S$ has a finite-dimensional motive in the sense of \cite{K}, which is known to hold in several examples.
There is much ongoing work of Ayoub on establishing the latter finite-dimensionality statement for all varieties, which would prove all the conjectures mentioned in the present paper. \\

\subsection{} \label{sub:philosophy}

Our approach to proving Theorem \ref{thm:main} is package relations in Chow in the language of representation theory. In short, we consider the Lie algebra: 
\begin{equation}
\label{eqn:virasoro}
\heis \times \vir
\end{equation}
where $\heis$ denotes a rank $(24-b)$ infinite-dimensional Heisenberg algebra and $\vir$ denotes the Virasoro algebra. There is an action of the Lie algebra above on:
$$
A^*(\Hilb) = \bigoplus_{n=0}^\infty A^*(\Hilb_n(S))
$$
which lifts the well-known action on cohomology.
We show that this action preserves the subring $V_\sm \subset A^*(\Hilb)$ generated by small tautological classes \eqref{eqn:intro small}. Furthermore, we show that $V_\sm$ is generated under the $\heis \times \vir$ action by $A^*(\Hilb_0(S)) \cong \BQ$, thus forming a lowest-weight module. The classification of lowest weight modules of $\heis$ (which is trivial) and of $\vir$ (which was developed in \cite{FF}) allows us to conclude that $V_\sm$ is (almost) an irreducible representation of $\heis \times \vir$. Therefore, Schur's lemma implies that $V_\sm$ injects into cohomology, thus establishing Theorem \ref{thm:main}. \\

\subsection{} \label{sub:intro lehn} The key ingredient in the above argument is that any product of small tautological classes can be obtained from $A^*(\Hilb_0(S)) \cong \BQ$ under the action of the Lie algebra \eqref{eqn:virasoro}. The analogous statement in cohomology follows from certain important results in geometric representation theory, namely Lehn's formula (\cite{L}) and the Li-Qin-Wang $W_{1+\infty}$ algebra action (\cite{LQW}). Therefore, most of the technical work that goes into the present paper is to lift the aforementioned results from cohomology to Chow rings. In more detail, recall the following operators studied by Nakajima (\cite{Nak}), and in a different formulation, by Grojnowski (\cite{G}):
\begin{equation}
\label{eqn:nak 1}
A^*(\Hilb) \xrightarrow{\fq_n} A^*(\Hilb \times S), \qquad A^*(\Hilb) \xrightarrow{\fq_n(\gamma)} A^*(\Hilb)
\end{equation}
defined for all $n \in \BZ \backslash 0$ and $\gamma \in A^*(S)$ by formulas \eqref{eqn:nak exp 1}, \eqref{eqn:nak exp 2}, \eqref{eqn:nakajima class}. The operators \eqref{eqn:nak 1} satisfy the relations of the Heisenberg algebra (\cite{G}, \cite{Nak}):
$$
[\fq_n(\gamma), \fq_{n'}(\gamma')] = n \delta_{n+n'}^0 \langle \gamma, \gamma' \rangle \cdot \text{Id}_\Hilb
$$
where $\langle, \rangle$ denotes the intersection pairing on $A^*(\Hilb)$. One may also adapt the notation above to compositions of several operators \eqref{eqn:nak 1}, for any $n_1,...,n_k \in \BZ \backslash 0$:
$$
A^*(\Hilb) \xrightarrow{\fq_{n_1}...\fq_{n_k}} A^*(\Hilb \times S^k), \qquad A^*(\Hilb) \xrightarrow{\fq_{n_1}...\fq_{n_k}(\Gamma)} A^*(\Hilb)
$$
and any $\Gamma \in A^*(S^k)$. A particular instance of this construction is given by the following Virasoro operators, which are close relatives of the operators constructed by Lehn (\cite{L}) in cohomology:
$$
L_n : A^*(\Hilb) \rightarrow A^*(\Hilb), \qquad L_n = \frac 12 \sum_{a+b = n} : \fq_a \fq_b: (\Delta^\tr)
$$
where $\Delta^\tr \in A^*(S \times S)$ is defined in \eqref{eqn:transcendental} and $: \ :$ denotes normal-ordering (see \eqref{eqn:normal}). Then our strategy for proving Theorem \ref{thm:main} is to consider the Lie algebra:
\begin{equation}
\label{eqn:the algebra}
\heis \times \vir = \Big \{ \fq_n(\gamma), L_{n'} \Big\}_{n \in \BZ \backslash 0, n' \in \BZ}^{\gamma \in R(S)} 
\end{equation}
which acts on $A^*(\Hilb)$ as explained above. To prove Theorem \ref{thm:main}, we must show that this action preserves the subring of $A^*(\Hilb)$ generated by small tautological classes, and that moreover it generates the latter subring from $A^*(\Hilb_0(S)) \cong \BQ$. To show this, we prove the following Chow-theoretic version of Lehn's formula (an equivalent version of equation (1) of \cite{L}) for any smooth projective surface $S$: \\

\begin{theorem}
\label{thm:lehn}

We have the equalities of operators $A^*(\eHilb) \rightarrow A^*(\eHilb \times S)$:
\begin{equation}
\label{eqn:degree}
\fG_2 = - \sum_{n = 1}^\infty \fq_{n} \fq_{-n} \Big|_\Delta 
\end{equation}
\begin{equation}
\label{eqn:lehn}
\fG_3 = - \frac 16 \sum_{n_1+n_2+n_3 = 0} : \fq_{n_1} \fq_{n_2} \fq_{n_3}: \Big|_\Delta - \frac t2 \sum_{n=1}^{\infty} n \fq_n \fq_{-n} \Big|_\Delta
\end{equation}
where $\fG_k : A^*(\eHilb) \rightarrow A^*(\eHilb \times S)$ is pullback followed by multiplication with $\ech_k(\CO_{\CZ})$, where $\CZ \subset \eHilb \times S$ is the universal subscheme \eqref{eqn:subscheme} and $t = c_1(\CK_S)$. \\

\end{theorem}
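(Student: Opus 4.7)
The plan is to prove both equalities by induction on the number of points, in the spirit of Lehn's original cohomological argument but carried out at the level of Chow groups. Since $A^*(\eHilb)$ is generated from $A^*(\eHilb_0(S)) \cong \BQ$ by repeated application of the Nakajima creation operators $\fq_{-m}(\gamma)$, it suffices to verify that both sides of \eqref{eqn:degree} and \eqref{eqn:lehn} \textbf{(i)} agree on the vacuum $1 \in A^*(\eHilb_0(S))$, and \textbf{(ii)} have matching commutators with every creation operator $\fq_{-m}(\gamma)$ (extended to $A^*(\eHilb \times S)$ by action on the first factor, so that the commutator makes sense as an operator $A^*(\eHilb) \to A^*(\eHilb \times S)$). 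Condition (i) is immediate: the left-hand sides vanish because $\CZ_0 = \emptyset$, and on the right-hand sides normal ordering places the annihilation operators $\fq_m$ with $m > 0$ to the right, where they kill the vacuum.

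For (ii), the commutators of the right-hand sides are algebraic consequences of the Chow-theoretic Heisenberg relation $[\fq_n(\gamma), \fq_{n'}(\gamma')] = n \delta_{n+n'}^0 \langle \gamma, \gamma' \rangle \Id$, applied termwise to the quadratic and cubic expressions (the cubic commutator producing the quadratic form on the right of \eqref{eqn:degree} plus a linear remainder in which $t$ will enter). The essential input is the geometric commutator $[\fG_k, \fq_{-m}(\gamma)]$, which I would compute by realizing $\fq_{-m}(\gamma)$ through its correspondence on the nested Hilbert scheme $\Hilb_{n, n+m}(S)$, pulling back the difference $\ech_k(\CO_{\CZ_{n+m}}) - \ech_k(\CO_{\CZ_n})$ to this nested space, and analyzing the residual sheaf $\CO_{\CZ_{n+m}/\CZ_n}$ — a length-$m$ punctual sheaf supported along the incidence locus. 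Pushing forward its Chern character via Grothendieck-Riemann-Roch yields explicit lower-order expressions in Nakajima operators that should match the commutators produced on the right-hand sides.

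The main obstacle, relative to Lehn's cohomological proof, is the rigidity of Chow groups: one cannot fall back on numerical or top-degree arguments to identify classes. Formula \eqref{eqn:degree} should require only a single commutator calculation on the smooth nested scheme $\Hilb_{n, n+1}(S)$, tractable by direct inspection of the universal ideal sheaf. Formula \eqref{eqn:lehn} is the harder step: the correction term $-\tfrac{t}{2}\sum n\, \fq_n \fq_{-n}\big|_\Delta$ encodes a Todd/canonical-class contribution hidden inside the expansion of $\ech_3(\CO_{\CZ})$, and isolating this contribution at the level of cycles — rather than through a characteristic-class identity in cohomology — is the central technical challenge. Here I would use formula \eqref{eqn:degree}, once proved, together with a careful Riemann-Roch-type bookkeeping of the normal bundle of the incidence locus inside $\Hilb_{n, n+1}(S) \times S$, to separate the cubic and linear contributions to $[\fG_3, \fq_{-m}(\gamma)]$ and pin down the precise coefficient $-t/2$ appearing in front of the correction.
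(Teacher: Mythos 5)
Your reduction in step (i)--(ii) rests on the claim that $A^*(\eHilb)$ is generated from the vacuum $1 \in A^*(\eHilb_0(S))$ by the operators $\fq_{-m}(\gamma)$, and this claim is false in Chow. The de Cataldo--Migliorini decomposition expresses $A^*(\eHilb)$ as a sum of classes $\fq_{n_1}\cdots\fq_{n_k}(\Gamma)\cdot v$ where $\Gamma$ ranges over \emph{all} of $A^*(S^k)^{\mathrm{sym}}$; the submodule reachable from the vacuum by the operators $\fq_{-m}(\gamma)$ only captures those $\Gamma$ that are sums of exterior products $\gamma_1\boxtimes\cdots\boxtimes\gamma_k$. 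In cohomology every class of $S^k$ is of this form by the K\"unneth theorem, which is exactly why Lehn's argument works there; in Chow there is no K\"unneth decomposition (already the diagonal in $A^*(S\times S)$ is not an exterior product), so even after verifying agreement on the vacuum and matching commutators with every $\fq_{-m}(\gamma)$, you would only have proved the identity on a proper subspace of $A^*(\eHilb)$. This is precisely the failure mode the paper flags when it says the cohomological proof ``relies critically on the fact that cohomology of Hilbert schemes form an irreducible module for the Heisenberg algebra'' and that ``this approach breaks down for Chow groups, which are too large.'' Your concluding paragraph correctly senses that the rigidity of Chow is the obstacle, but locates it in the wrong place (the computation of $[\fG_3,\fq_{-m}(\gamma)]$) rather than in the invalidity of the reduction itself.

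The paper's actual proof avoids commutators with Nakajima operators entirely. It factors $\fq_{\pm k}$ into compositions of elementary correspondences through the nested Hilbert schemes $\eHilb_{n,n+1}$ and $\eHilb_{n-1,n,n+1}$ (Theorem \ref{thm:nakajima}), proves two cycle-level identities \eqref{eqn:formula 1} and \eqref{eqn:formula 2} on fiber products of these nested schemes by explicit dimension counts, identification of irreducible components, and excess intersection (Lemma \ref{lem:main}), converts these into operator identities (Propositions \ref{prop:aux 1} and \ref{prop:aux 2}), and finally obtains \eqref{eqn:degree} and \eqref{eqn:lehn} as telescoping alternating sums, with the base terms $S_0$ and $-A_0+B_0+C_0$ producing $\ech_2(\CO_\CZ)$ and $2\ech_3(\CO_\CZ) - t\,\ech_2(\CO_\CZ)$ directly via the Segre-class computations \eqref{eqn:segre 1}--\eqref{eqn:segre 2}. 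If you want to salvage your strategy, you would need to replace the appeal to generation by the vacuum with an argument of this intersection-theoretic kind that establishes the identity as an equality of correspondences, not merely of their actions on a Heisenberg submodule.
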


\noindent The formulas above hold for any smooth projective surface $S$ and one sets $t = 0$ in the particular case of a K3 surface. Theorem \ref{thm:lehn} was shown in \cite{L} in cohomology; however, the argument given there does not generalize to Chow.
Indeed, the proof in cohomology relies critically on the fact that cohomology of Hilbert schemes form an irreducible module for the Heisenberg algebra.  This reduces the identity to showing both sides have the same commutation relations with the Nakajima operators \eqref{eqn:nak 1}.  This approach breaks down for Chow groups, which are too large to form an irreducible module of the Heisenberg algebra.
Instead, we will prove Theorem \ref{thm:lehn} by a more intersection-theoretic argument (which also leads to a new proof of Lehn's formula in cohomology) in Section \ref{sec:lehn}. \\

\noindent In cohomology, the study of the operators $\fG_k$ was systematized by Li-Qin-Wang in \cite{LQW}, where the authors showed that the algebra generated by $\fG_k$ and $\fq_n$ satisfies the relations in the deformed $W_{1+\infty}$ algebra. To prove this statement, \loccit also use the irreducibility of $H^*(\Hilb)$ as a module over the Heisenberg algebra. In Section \ref{sec:rep theory}, we will prove that the following version of their result also holds in Chow: \\
  
 \begin{theorem}
 \label{thm:lqw}
 	
If $S$ has $c_1 (\eTan_S)= 0$ and $c_2 (\eTan_S) = e$, then there exist operators $\{\fJ_n^k : \eah \rightarrow \eahs\}_{n \in \BZ}^{k \geq 0}$ determined by the following conditions:
\begin{align}
&\fJ_n^0 = - \fq_n \label{eqn:lqw 1} \\
&\fJ_0^k = k! \left( \fG_{k+1} + \frac {\pi_2^*(e)}{12} \cdot \fG_{k-1} \right) \label{eqn:lqw 3}
\end{align}
and the following relations for all $n,n' \in \BZ$ and $k,k' \geq 0$ with $k+k' \geq 3$:
\begin{align}
&[\fJ_n^k, \fJ_{n'}^{k'}] = (kn' - k'n) \Delta_*(\fJ_{n+n'}^{k+k'-1}) + \Omega_{n,n'}^{k,k'} \Delta_* \left(\frac {\pi_2^*(e)}{12} \cdot  \fJ_{n+n'}^{k+k'-3}\right) \label{eqn:lqw rel 1} \\
&[\fJ_n^0, \fJ_{n'}^0] = n \delta_{n+n'}^0 \Delta_*(\pi_1^*) \label{eqn:lqw rel 2} \\
&[\fJ_n^1, \fJ_{n'}^0] = n' \Delta_*( \fJ_{n+n'}^0) \label{eqn:lqw rel 3} \\
&[\fJ_n^2, \fJ_{n'}^0] = 2n' \Delta_*( \fJ_{n+n'}^1) - \frac {n^3-n}6 \delta_{n+n'}^0 \Delta_*(\pi_2^*(e) \cdot \pi_1^*) \label{eqn:lqw rel 4} \\
&[\fJ_n^1, \fJ_{n'}^1] = (n'-n) \Delta_*( \fJ_{n+n'}^1) - \frac {n^3-n}{12} \delta_{n+n'}^0 \Delta_*(\pi_2^*(e) \cdot \pi_1^*) \label{eqn:lqw rel 5}
\end{align}
(see Theorem 5.5 of \cite{LQW} for the precise formula of the integers $\Omega_{n,n'}^{k,k'}$, and note that our $\fJ_n^k$ are $\fJ_{-n}^k$ of loc. cit.). The two sides of each of relations \eqref{eqn:lqw rel 1}--\eqref{eqn:lqw rel 5} are homomorphisms $\eah \rightarrow \eahss$, with each of the operators $\fJ_n^k$ and $\fJ_{n'}^{k'}$ in the LHS acting in one and the same of the two factors of $S^2$. \\

\end{theorem}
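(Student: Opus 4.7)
The plan is to lift the construction of Li-Qin-Wang \cite{LQW} to Chow, using the Chow-theoretic Lehn formula (Theorem \ref{thm:lehn}) in place of the Heisenberg irreducibility argument that drives their cohomological proof. In the cohomological setting, relations \eqref{eqn:lqw rel 1}--\eqref{eqn:lqw rel 5} are established by computing commutators with Nakajima operators and then invoking the irreducibility of $H^*(\Hilb)$ as a Heisenberg module to pin things down. Since $A^*(\Hilb)$ is far from being an irreducible Heisenberg module, I would instead build the $\fJ_n^k$ one degree at a time and verify the relations by direct intersection theory.

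Concretely, I would set $\fJ_n^0 := -\fq_n$ and $\fJ_0^k := k!(\fG_{k+1} + \frac{\pi_2^*(e)}{12}\fG_{k-1})$ as dictated by \eqref{eqn:lqw 1} and \eqref{eqn:lqw 3}. For $k \geq 1$ and $n \neq 0$, I would propagate via the anticipated identity of schematic form
\[
[\fJ_0^{k+1}, \fq_n] \;=\; (k+1) n \, \Delta_*(\fJ_n^k) + (\text{lower-order correction}),
\]
taken as an implicit definition of $\fJ_n^k$. For this to be well-posed, the commutator on the left must be supported on the diagonal $\Delta \subset S \times S$, so that applying $\Delta^*$ recovers a class on $S$; this is a Chow computation that uses Lehn's formula \eqref{eqn:lehn} in an essential way.

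With the operators in hand, relation \eqref{eqn:lqw rel 2} is the Nakajima--Grojnowski Heisenberg relation in Chow, while \eqref{eqn:lqw rel 3}--\eqref{eqn:lqw rel 5} reduce to computing $[\fG_2, \fq_n]$, $[\fG_3, \fq_n]$, and $[\fG_3, \fG_2]$. By Theorem \ref{thm:lehn} each of $\fG_2, \fG_3$ is a normal-ordered polynomial in $\fq$, so these brackets become finite intersection computations on $\Hilb \times S^k$ that use only the Heisenberg commutators together with diagonal restrictions on $S^2$ and $S^3$, the class $\pi_2^*(e)$ appearing through contraction with $\Delta^{\tr}$. Relation \eqref{eqn:lqw rel 1} is then proved by induction on $k + k'$: unfolding $\fJ_n^k$ as $\Delta^*$ of a commutator of $\fJ_0^{k+1}$ with a Nakajima operator and applying the Jacobi identity reduces $[\fJ_n^k, \fJ_{n'}^{k'}]$ to already-known lower-order commutators, and the combinatorial identity producing the coefficient $\Omega_{n,n'}^{k,k'}$ matches the one extracted in \cite{LQW}.

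The main obstacle will be the propagation step: showing in Chow that $[\fJ_0^{k+1}, \fq_n]$ is supported on the diagonal $\Delta \subset S \times S$ for all $k$. In cohomology this is automatic from irreducibility, but in Chow one must argue directly by presenting $\fG_{k+1}$ as an explicit correspondence on $\Hilb \times \Hilb \times S$ built from $\ch_{k+1}(\CO_{\CZ})$ and analyzing its composition with the Nakajima correspondence for $\fq_n$. The diagonal support ultimately reflects the fact that $\ch_{k+1}(\CO_{\CZ})$ is supported on the universal subscheme $\CZ \subset \Hilb \times S$, while the Nakajima modification changes the underlying subscheme only at a single varying point of $S$, forcing any nontrivial commutator to localize over the diagonal.
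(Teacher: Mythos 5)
Your high-level strategy --- replace the Heisenberg-irreducibility argument of Li--Qin--Wang by Chow-level intersection theory anchored in Theorem \ref{thm:lehn} --- is the right one, and your final paragraph correctly locates where the difficulty lives. But the way you set up the induction has a genuine gap, and it differs from the paper's route in a way that matters. The paper \emph{defines} $\fJ_n^k$ for all $n,k$ by the explicit normal-ordered polynomial \eqref{eqn:formula j} in Nakajima operators (the same formula as in \cite{LQW}). With that definition, every one of the relations \eqref{eqn:lqw rel 1}--\eqref{eqn:lqw rel 5} is a purely formal consequence of the Heisenberg commutator \eqref{eqn:heisenberg}, valid verbatim in Chow, with no geometric input whatsoever. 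The entire geometric content of the theorem is then concentrated in the single identity \eqref{eqn:lqw 3}, proved by induction on $k$: the base cases $k=1,2$ are Theorem \ref{thm:lehn}, and the induction step rests on identifying $\fJ_{\pm 1}^a$ with the explicit correspondences $\mp(p_\pm \times p_S)_* \circ \fr^a \circ p_\mp^*$ on the nested Hilbert scheme and proving Claim \ref{claim}, i.e.\ that $[\fJ_1^a, \fJ_{-1}^b]$ is multiplication by $(a+b)!\,\Delta_*(\ch_{a+b}(\CO_{\CZ}))$; comparing with the formal relation \eqref{eqn:lqw 5} then pins down $\fJ_0^{k}$ for the next $k$.

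Your proposal inverts this: you take \eqref{eqn:lqw 3} as the \emph{definition} of $\fJ_0^k$ and propagate to $n \neq 0$ via $[\fJ_0^{k+1}, \fq_n]$. The problem is that $\fJ_0^{k+1}$ involves $\fG_{k+2}$, i.e.\ multiplication by $\ch_{k+2}(\CO_{\CZ})$, and for $k \geq 2$ you have no expression for this operator in terms of Nakajima operators --- that is precisely the statement being proved. Concretely, already to define $\fJ_n^2$ for $n\neq 0$ and verify \eqref{eqn:lqw rel 4} you must control $[\fG_4, \fq_n]$, which Theorem \ref{thm:lehn} does not reach; so the claim that the brackets ``become finite intersection computations using only the Heisenberg commutators'' is circular at this point. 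Your closing heuristic (that $\ch(\CO_{\CZ})$ is supported on $\CZ$ while the Nakajima correspondence modifies the subscheme at one point, forcing the commutator onto the diagonal) is qualitatively the same observation the paper makes for the compositions \eqref{eqn:comp 1} and \eqref{eqn:comp 2}, but diagonal \emph{support} only gives you $\Delta_*(\Gamma)$ for some unknown $\Gamma$; identifying $\Gamma$ is where all the work is, and it requires the machinery of Section \ref{sec:lehn} --- the locally free resolution of $\CI$, the projectivized presentations of $\Hilb_{n,n+1}$, the Segre-class formulas \eqref{eqn:segre 1}--\eqref{eqn:segre 2}, and the exact sequence \eqref{eqn:ses}. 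If you want to salvage your propagation scheme, the fix is exactly the paper's: propagate in $k$ at fixed $n = \pm 1$ using $[\fJ_{\pm 1}^k, \fJ_0^2] = \mp 2\Delta_*(\fJ_{\pm 1}^{k+1})$ (which only ever needs $\fG_3$, covered by Lehn's formula), realize $\fJ_{\pm 1}^k$ as the explicit correspondences twisted by $c_1(\CL)^k$, and then extract $\fJ_0^{k}$ from the commutator $[\fJ_1^a, \fJ_{-1}^b]$ computed geometrically.
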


\subsection{}

\noindent As we mentioned, the connection between Theorem \ref{thm:main} and Conjecture \ref{conj:bv} (for $X = \Hilb_n(S))$ is that divisor classes are among the small tautological classes, but the Chern classes of the tangent bundle are not. To understand the latter, one needs to consider instead the set of big tautological classes, namely:
\begin{equation}
\label{eqn:intro big}
\pi_{1*} \Big[ \ch_{k_1}(\CO_{\CZ_n}) ... \ch_{k_t}(\CO_{\CZ_n}) \cdot \pi_2^*(\gamma) \Big] \qquad \forall k_1,...,k_t \in \BN, \gamma \in R(S)
\end{equation}
with the notation in \eqref{eqn:first diag}. Then we propose: \\

\begin{conjecture}
\label{conj:main}
	
The cycle class map $A^*(\eHilb_n(S)) \rightarrow H^*(\eHilb_n(S))$ is injective on the subring generated by big tautological classes, for any $n \in \BN$. \\

\end{conjecture}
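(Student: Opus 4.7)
The plan is to generalize the representation-theoretic strategy of Section \ref{sub:philosophy} from the Lie algebra $\heis \times \vir$ to the $W_{1+\infty}$ algebra whose Chow action was established in Theorem \ref{thm:lqw}. Let $V_\big \subset A^*(\Hilb)$ denote the graded subring generated by the big tautological classes \eqref{eqn:intro big}, and let $\CA \subset \End(A^*(\Hilb))$ be the Lie subalgebra spanned by the operators
\[
\fq_n(\gamma), \qquad \fJ_n^k(\gamma) := \pi_{1*}\bigl(\fJ_n^k \cdot \pi_2^*(\gamma)\bigr)
\]
for $n \in \BZ$, $k \geq 0$, and $\gamma \in R(S)$. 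Because $R(S) \subset A^*(S)$ is closed under multiplication and all right-hand sides in \eqref{eqn:lqw rel 1}--\eqref{eqn:lqw rel 5} factor through the small diagonal $\Delta_*$, the subspace $\CA$ is closed under commutators.

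The first step is to show that $\CA$ preserves $V_\big$ and generates it from the vacuum $A^*(\Hilb_0(S)) \cong \BQ$. This should proceed by induction on the number $t$ of $\ch_{k_i}$ factors appearing in \eqref{eqn:intro big}: the base case $t = 1$ is precisely the case of small tautological classes controlled by Theorem \ref{thm:main}, and the inductive step exploits that $\fJ_0^k(\gamma)$ agrees, modulo lower-order corrections given by \eqref{eqn:lqw 3}, with the operator of multiplication by $\ch_{k+1}(\CO_\CZ) \cdot \pi_2^*(\gamma)$ followed by pushforward. Iterated applications of the $\fJ_0^k(\gamma)$'s combined with the $\fq_n(\gamma')$'s then reproduce all classes of the form \eqref{eqn:intro big}.

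The second and decisive step is to identify $V_\big$ with an irreducible quasi-finite lowest weight $\CA$-module. The representation theory of the $W_{1+\infty}$ algebra, developed by Kac-Radul and Frenkel-Kac-Radul-Wang, classifies such modules via characters and explicit determinant formulas; at the central charge $c = 24 - b$ (where $b$ is the transcendental rank of the K3 surface $S$), the expected target is the same tensor product representation that arises as $V_\big^{\text{coh}} \subset H^*(\Hilb)$. Once an abstract irreducible $\CA$-module $M$ is exhibited that surjects onto both $V_\big$ and $V_\big^{\text{coh}}$, Schur's lemma forces the cycle class map to restrict to an isomorphism $V_\big \xrightarrow{\sim} V_\big^{\text{coh}}$.

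The main obstacle is precisely this irreducibility step: a priori the Chow module $V_\big$ could carry extra relations beyond those in cohomology, producing a strictly smaller irreducible quotient and thereby obstructing the identification with $M$. Ruling this out requires either matching the Hilbert series of $V_\big$ and $V_\big^{\text{coh}}$ directly, perhaps via a monomial basis of Li-Qin-Wang type lifted to Chow, or exhibiting $M$ as the quotient of a Verma module by an explicit ideal of singular vectors and verifying that none of those vectors vanish accidentally in Chow. A secondary technical point is controlling the higher-order terms involving $\pi_2^*(c_2(\eTan_S))$ in \eqref{eqn:lqw rel 4}--\eqref{eqn:lqw rel 5}, which modify the structure constants of $\CA$ and hence complicate the identification with a standard $W_{1+\infty}$ representation.
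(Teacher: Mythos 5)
This statement is a \emph{conjecture} in the paper, not a theorem: the authors do not prove it, and in fact they devote Subsection \ref{sub:intro big} and Section \ref{sec:big} to explaining why the very strategy you propose cannot succeed without first resolving an open problem. What the paper actually proves is Proposition \ref{prop:equiv}, namely that Conjecture \ref{conj:main} is \emph{equivalent} to Voisin's Conjecture \ref{conj:v}, which by \cite{Y} reduces to the Kimura relation \eqref{eqn:yin} in $A^*(S^{2(b+1)})$ --- a statement that remains open in general.

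The gap in your argument is exactly the step you flag as ``the main obstacle'': the irreducibility of $V_{\text{big}}$ is not a technical point to be checked, it is equivalent to the original problem. Concretely, the paper shows that after decomposing $\Delta_{12\ldots k}$ via \eqref{eqn:bv 6}, the relevant algebra is not genuinely $W_{1+\infty}$ but rather $\heis \times \fsp_{2\infty}$, with $\fsp_{2\infty}$ realized by the operators $\fq_m\fq_n(\Delta^{\tr})$ of \eqref{eqn:sp}; correspondingly $V_{\text{big}} = \text{Fock} \otimes W$, and $W$ is a quotient of the parabolic Verma module $M = U(\fg)\otimes_{U(\fp)}\BQ_\chi$ of highest weight $-\tfrac{b}{2}(\e_1+\e_2+\cdots)$. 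Unlike the Virasoro case used for Theorem \ref{thm:main} (where the Feigin--Fuchs classification pins down the unique maximal submodule and shows it is generated by $L_1 v = 0$), this Verma module $M$ has a nontrivial proper submodule $L$ generated by the antisymmetrized vectors \eqref{eqn:yin rep}, which under the de Cataldo--Migliorini correspondence are precisely the left-hand side of the Kimura relation. So Schur's lemma cannot be invoked: whether the surjection $M \twoheadrightarrow W$ factors through $M/L$ --- equivalently, whether those vectors vanish in $A^*(\Hilb)$ --- \emph{is} the Kimura relation, hence is at least as hard as Conjecture \ref{conj:v} itself. Your proposal therefore does not close the argument; it reformulates the conjecture in representation-theoretic language, which is essentially the content of Section \ref{sec:big} of the paper.
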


\noindent Note that Conjecture \ref{conj:main} implies  Conjecture \ref{conj:bv} for $X = \Hilb_n(S)$, as a consequence of Proposition \ref{prop:chern tangent}. In \cite{V}, Voisin proposed the following: \\

\begin{conjecture}
\label{conj:v}

Let $p_i : S^n \rightarrow S$ denote the $i$-th projection. For any $n \in \BN$, the restriction of the cycle class map $A^*(S^n) \rightarrow H^*(S^n)$ to the subring generated by: 
$$
\Big \{ p_i^*(l) \Big\}^{l \in A^1(S)}_{1 \leq i \leq n} \quad \text{and} \quad \Big\{ (p_i \times p_j)^*(\Delta) \Big\}_{1 \leq i < j \leq n}
$$
is injective. Above, $\Delta$ denotes the class of the diagonal in $A^*(S \times S)$. \\

\end{conjecture}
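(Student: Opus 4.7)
My plan is to reduce Conjecture \ref{conj:v} to Theorem \ref{thm:main} by combining the $\fS_n$-symmetry of the generating set with the Hilbert-Chow morphism $\mu: \Hilb_n(S) \to S^{(n)}$. Let $R_n \subset A^*(S^n)$ denote the subring in question. Since the generators $p_i^*(l)$ and $(p_i \times p_j)^*(\Delta)$ are permuted by $\fS_n$, the subring $R_n$ is $\fS_n$-stable; decomposing $R_n = \bigoplus_{\lambda \vdash n} R_n^\lambda$ into $\fS_n$-isotypic components, and noting that the cycle class map is $\fS_n$-equivariant, it is enough to prove injectivity on each $R_n^\lambda$ separately.

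\textbf{The invariant component.} For the trivial isotypic piece $R_n^{(n)} \subset A^*(S^n)^{\fS_n}$, I would use the identification $A^*(S^n)^{\fS_n} \cong A^*(S^{(n)})_\BQ$ via the quotient $S^n \to S^{(n)}$. Since $\mu$ is proper and birational, $\mu^*: A^*(S^{(n)})_\BQ \to A^*(\Hilb_n(S))_\BQ$ is injective, so in order to prove injectivity of $R_n^{(n)}$ in $H^*(S^n)$ it suffices to check that its image inside $A^*(\Hilb_n(S))$ lies in the small tautological subring of \eqref{eqn:intro small}. Direct computation on symmetric combinations of generators gives: $\sum_i p_i^*(l)$ corresponds to the divisor class $l[n] = \pi_{1*}(\ch_2(\CO_{\CZ_n}) \cdot \pi_2^*(l))$, while $\sum_{i<j}(p_i \times p_j)^*(\Delta)$ corresponds to a multiple of the exceptional divisor of $\mu$, which is again a small tautological class. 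Hence the image lies in the small tautological subring, and Theorem \ref{thm:main} concludes this case.

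\textbf{Non-invariant components and main obstacle.} For $\lambda \neq (n)$, my plan is to use the isospectral Hilbert scheme $\widetilde{S^n} := (S^n \times_{S^{(n)}} \Hilb_n(S))^{\mathrm{red}}$, which by Haiman's theorem is Cohen-Macaulay and flat over $\Hilb_n(S)$, and carries an $\fS_n$-action with an equivariant map $\psi : \widetilde{S^n} \to S^n$ and an $\fS_n$-invariant map $\phi : \widetilde{S^n} \to \Hilb_n(S)$. The goal is to identify each $R_n^\lambda$ with a subring of tautological classes on a product of smaller Hilbert schemes $\prod_i \Hilb_{\mu_i}(S)$ indexed by a partition $\mu$, via a Chow-theoretic refinement of the G\"ottsche/Nakajima decomposition $H^*(\Hilb_n(S)) \cong \bigoplus_{\mu \vdash n} \bigotimes_i \mathrm{Sym}^{m_i} H^*(S)$ coming from the Heisenberg/Virasoro action of \eqref{eqn:the algebra}. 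The hard part will be precisely this step: the machinery in the present paper only controls the invariant tautological subring, whereas an $\fS_n$-equivariant refinement is needed to cover the remaining isotypic parts. Making this refinement explicit, and verifying that the generators of each $R_n^\lambda$ correspond under it to tautological classes on products of Hilbert schemes, is the essential difficulty I would expect to encounter.
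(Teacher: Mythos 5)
The statement you are addressing is left as an \emph{open conjecture} in the paper (it is Voisin's conjecture from \cite{V}); the paper does not prove it. What the paper proves is Proposition \ref{prop:equiv}: Conjecture \ref{conj:v} is equivalent to Conjecture \ref{conj:main}, and by \cite{Y} both reduce to the Kimura relation \eqref{eqn:yin}, which remains open. Any complete argument along your lines would therefore have to establish that relation, and your own final paragraph concedes that the step which would do so (the non-invariant isotypic components) is unresolved. So the proposal is not a proof; but there is also a concrete error earlier on.

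The error is in the ``invariant component'' step. You claim that the image of $R_n^{(n)}$ in $A^*(\Hilb_n(S))$ lies in the small tautological subring, so that Theorem \ref{thm:main} applies. This is false. The subring generated by the symmetrized diagonals contains products such as $\mathrm{Sym}\bigl(\Delta_{12}\Delta_{34}\cdots\bigr)$, and under the de Cataldo--Migliorini correspondence \eqref{eqn:decomp} these correspond to the classes $\prod_i \fq_{m_i}\fq_{n_i}(\Delta)\cdot v$ spanning $V_{\text{big}}$, which by Proposition \ref{prop:prop 3} equals $A^*_{\text{big}}(\Hilb)$ --- strictly larger than $A^*_{\text{small}}(\Hilb)$, and precisely the subring whose injectivity is Conjecture \ref{conj:main}. (Relatedly, the transfer from $A^*(S^n)^{\fS_n}$ to $A^*(\Hilb_n)$ is implemented by correspondences, not by a ring homomorphism $\mu^*$ --- the Hilbert--Chow morphism has singular target --- so even if each generator mapped to a small tautological class, products would not.) The representation-theoretic obstruction is spelled out in Subsection \ref{sub:intro big}: once diagonal classes enter multiplicatively, the relevant Lie algebra is $\heis\times\fsp_{2\infty}$ rather than $\heis\times\vir$, and proving irreducibility of the corresponding module is at least as hard as the Kimura relation. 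Your invariant-component reduction is therefore circular, and the remaining components, which you flag as the ``essential difficulty,'' are where the entire content of the conjecture lives.
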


\noindent In \cite{Y}, Conjecture \ref{conj:v} was shown to boil down to the ``Kimura relation", a formula in the Chow ring of $S^{2(b+1)}$ that we recall in \eqref{eqn:yin} (here $b$ is the rank of the transcendental lattice of $S$). By a standard argument, one has: \\

\begin{proposition}
\label{prop:equiv}

Conjecture \ref{conj:main} is equivalent to Conjecture \ref{conj:v}. \\

\end{proposition}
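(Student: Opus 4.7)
The plan is to apply Theorems \ref{thm:lehn} and \ref{thm:lqw} to build a dictionary between the subring of big tautological classes in $A^*(\Hilb_n(S))$ and the subring $R_n \subset A^*(S^n)$ from Conjecture \ref{conj:v}. Since the dictionary is a polynomial identification coming from geometric correspondences, it commutes with the cycle class map, so the two injectivity statements become equivalent once the dictionary is in place.

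The first step is to iterate Theorem \ref{thm:lehn} together with the $W_{1+\infty}$ relations of Theorem \ref{thm:lqw} in order to express every generator
$$\pi_{1*}\Big[\ch_{k_1}(\CO_{\CZ_n}) \cdots \ch_{k_t}(\CO_{\CZ_n}) \cdot \pi_2^*(\gamma)\Big]$$
with $\gamma \in R(S)$ as a normally-ordered polynomial in Nakajima operators $\fq_n(\delta)$ applied to the vacuum, where the decorations $\delta$ lie in the subring of $A^*(S^\bullet)$ generated by divisor classes on $S$ and by pullbacks of the diagonal $\Delta \in A^*(S \times S)$. This is the content of repeatedly applying the formulas for $\fG_k$ and $\fJ_n^k$, whose non-$\fq$ inputs involve only $\Delta$, divisor classes, and $c_2(\eTan_S)$.

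The second step is to invoke the Nakajima/Grojnowski identification of $\bigoplus_n A^*(\Hilb_n(S))$ with a Fock space built on $A^*(S)$: under this identification, a monomial $\fq_{n_1}(\gamma_1) \cdots \fq_{n_k}(\gamma_k)|0\rangle$ corresponds to a symmetrized pushforward of $\gamma_1 \boxtimes \cdots \boxtimes \gamma_k$ from a product of partial diagonals inside $S^n$. Consequently, decorations involving divisors on $S$ match the generators $p_i^*(l)$ of $R_n$, while decorations involving $\Delta$ match the generators $(p_i \times p_j)^*(\Delta)$. This identifies the two subrings, modulo the $S_n$-invariance already built into $R_n$ (since its generators are permuted among themselves by $S_n$, so $R_n$ is already $S_n$-invariant).

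The main obstacle is the combinatorial bookkeeping that ensures no ``foreign'' decoration creeps in when expanding a product of Chern characters of $\CO_{\CZ_n}$ into normally-ordered Nakajima operators: one must track the coefficients $\Omega_{n,n'}^{k,k'}$ of Theorem \ref{thm:lqw} and use that $c_2(\eTan_S) = 24 \cdot [\pt]$ lies in $R(S)$ for a K3 surface $S$, so that the correction terms involving $\pi_2^*(e)$ in \eqref{eqn:lqw 3} and \eqref{eqn:lqw rel 1}--\eqref{eqn:lqw rel 5} stay inside the admissible subring. Granting this verification, every algebraic relation in the big tautological subring on $\Hilb_n(S)$ that vanishes in cohomology translates to a relation in $R_n$ that vanishes in cohomology, and conversely, so the equivalence of Conjectures \ref{conj:main} and \ref{conj:v} follows.
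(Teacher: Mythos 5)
Your overall architecture is the same as the paper's: express every big tautological class as a Nakajima monomial $\fq_{n_1}\cdots\fq_{n_k}(\Gamma)\cdot v$ with $\Gamma$ a product of pairwise diagonals and classes in $R(S)$ (this is the content of Propositions \ref{prop:prop 1}, \ref{prop:preserve} and \ref{prop:prop 3}), and then transfer injectivity statements back and forth between $A^*(\Hilb)$ and $A^*(S^k)$. The gap is in your second step. The ``identification of $\bigoplus_n A^*(\Hilb_n(S))$ with a Fock space built on $A^*(S)$'' is a theorem of Nakajima--Grojnowski only in \emph{cohomology}, where it follows from the Heisenberg relations together with G\"ottsche's dimension count. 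On Chow groups it fails as literally stated: $A^*(\Hilb)$ is not an irreducible Heisenberg module (the paper stresses that this is exactly why Lehn's cohomological argument breaks down), and the K\"unneth formula fails for Chow groups, so the decorations you need (in particular the diagonals $\Delta_{ij}$) do not live in $A^*(S)^{\otimes k}$ and your monomials $\fq_{n_1}(\gamma_1)\cdots\fq_{n_k}(\gamma_k)|0\rangle$ with split decorations do not even span the big tautological ring. What is actually required is the de Cataldo--Migliorini motivic decomposition (\cite{dCM}, Theorem 5.4.1, recalled as \eqref{eqn:decomp} in the paper), namely that $A^*(\Hilb)$ is the \emph{direct sum} of the images of $\Gamma\mapsto\fq_{n_1}\cdots\fq_{n_k}(\Gamma)\cdot v$ over $\Gamma\in A^*(S^k)^{\mathrm{sym}}$. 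This is a substantial independent theorem, not a consequence of the operator calculus of Theorems \ref{thm:lehn} and \ref{thm:lqw}, and without it the direction ``Conjecture \ref{conj:main} implies Conjecture \ref{conj:v}'' collapses: from $\fq_{n_1}\cdots\fq_{n_k}(\Gamma)\cdot v=0$ in $A^*(\Hilb)$ you have no way to conclude $\Gamma=0$ in $A^*(S^k)$.

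A secondary point: for that same direction you must also know that $\fq_{n_1}\cdots\fq_{n_k}(\Gamma)\cdot v$ with $\Gamma$ a product of diagonals and classes in $R(S)$ is genuinely a big tautological class, so that Conjecture \ref{conj:main} applies to it. This is the inclusion $V_\big\subset A^*_\big(\Hilb)$, which the paper proves by the geometric Proposition \ref{prop:preserve} (push-pull through nested Hilbert schemes), not by Theorems \ref{thm:lehn} and \ref{thm:lqw}; your dictionary implicitly uses both inclusions but only sketches the argument for one. Once \eqref{eqn:decomp} and both inclusions are in place, the remainder of your argument --- equivariance of the cycle class map, symmetrization with respect to repeated indices, and the use of \eqref{eqn:bv 6} to reduce all diagonals to pairwise ones --- goes through essentially as in the paper.
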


\subsection{} \label{sub:intro big} One may ask if the representation theoretic approach of Subsection \ref{sub:philosophy} can be generalized to prove the more general Conjecture \ref{conj:main}. The answer is no, since developing such a framework to attack Conjecture \ref{conj:main} will necessarily boil down to the Kimura relation, which was already known (\cite{Y}) to imply Conjecture \ref{conj:v}. In more detail, if one wanted an algebra $\fg$ that acts on $A^*(\Hilb)$ such that all big tautological classes can be generated via $\fg$ from $A^*(\Hilb_0) \cong \BQ$, then one would need to take:
\begin{equation}
\label{eqn:symplectic}
\fg = \heis \times \fsp_{2\infty} = \Big\{ \fq_n(\gamma), \fq_n\fq_m(\Delta^\tr) \Big\}^{\gamma \in R(S)}_{m,n \in \BZ \backslash 0}
\end{equation}
Unfortunately, we will explain in Section \ref{sec:proof} that the representation theory of $\fsp_{2\infty}$ alone is not enough to establish Conjecture \ref{conj:main}. This is because the classification of lowest weight $\fsp_{2\infty}$--modules is more complicated than that of $\vir$--modules, and proving that the subring of $A^*(\Hilb)$ generated by big tautological classes is an irreducible module for $\fsp_{2\infty}$ is at least as hard as proving the Kimura relation \eqref{eqn:yin}. \\

\subsection{} We would like to thank Pavel Etingof, Daniel Huybrechts, and Ivan Losev for many interesting discussions. We are especially grateful to Claire Voisin for several conversations on these topics.  D.M. is partially supported by NSF FRG grant DMS-1159265.  A.N. would like to thank MSRI, Berkeley, for their hospitality while this paper was being written in the Spring of 2018, and gratefully acknowledges the support of NSF grants DMS-1600375 and DMS-1440140. \\

\section{The Chow ring and Hilbert schemes of a K3 surface}

\subsection{} In the present paper, $A^*(X)$ will denote the Chow ring of a smooth projective variety $X$ with coefficients in $\BQ$, with the grading by codimension. In the particular case of a K3 surface $S$, Beauville and Voisin (\cite{BV}) have studied the class $c \in A^2(S)$ of any closed point on a rational curve in $S$. They proved the following relations:
\begin{equation}
\label{eqn:bv 1}
c_2(\Tan_S) = 24c 
\end{equation}
\begin{equation}
\label{eqn:bv 2}
l \cdot l' = \langle l,l'\rangle c
\end{equation}
for all $l,l' \in A^1(S)$. In \eqref{eqn:bv 1}, $\Tan_S$ denotes the tangent bundle of the surface $S$. In \eqref{eqn:bv 2}, we use the notation $\langle \cdot, \cdot \rangle : A^*(S) \otimes A^*(S) \rightarrow \BQ$ for the intersection pairing. Moreover, Beauville and Voisin prove the following equalities in $A^*(S \times S)$, where we will write $l_i,c_i$ for the classes $l,c$ pulled back from the $i$-th factor, $i \in \{1,2\}$:
\begin{equation}
\label{eqn:bv 3}
\Delta \cdot c_1 = \Delta \cdot c_2 = c_1 \cdot c_2
\end{equation}
\begin{equation}
\label{eqn:bv 4}
\Delta \cdot l_1 = \Delta \cdot l_2 = l_1 \cdot c_2 + l_2 \cdot c_1 
\end{equation}
Finally, we have the following formulas in $A^*(S \times S \times S)$, where $\Delta_{ij}$ will denote the class of the codimension 2 diagonal pulled back from the $i$-th and $j$-th factor, and $\Delta_{123} = \Delta_{12}\cdot \Delta_{23}$ denotes the class of the smallest (dimension 2) diagonal:
\begin{equation}
\label{eqn:bv 5}
\Delta_{123} = \Delta_{12} \cdot c_3 + \Delta_{13} \cdot c_2 + \Delta_{23} \cdot c_1 - c_1 \cdot c_2 - c_1 \cdot c_3 - c_2 \cdot c_3
\end{equation}
Combining \eqref{eqn:bv 3} with \eqref{eqn:bv 5}, one obtains the following formula for the class $\Delta_{12...n}$ of the smallest (dimension 2) diagonal inside $S^n$, for any natural number $n$:
\begin{equation}
\label{eqn:bv 6}
\Delta_{12...n} = \sum_{1 \leq i < j \leq n} \Delta_{ij} \prod_{k \neq i,j} c_k - (n-2) \sum_{i=1}^n \prod_{k \neq i} c_k
\end{equation}
Thus, it is a feature of K3 surfaces that arbitrary diagonals in $S^n$ can be expressed in terms of codimension 2 diagonals, and the pull-back of $c$ from the various factors. \\

\subsection{}
\label{sub:bv}

It is convenient to consider the following modification of the diagonal class:
\begin{equation}
\label{eqn:transcendental}
\Delta^\tr = \Delta - c_1 - c_2 - \sum_i l_{(i)1} l^{(i)}_2 \in A^*(S \times S)
\end{equation}
where $\{l_{(i)},l^{(i)}\}$ denote dual bases of $\Pic(S) \otimes \BQ$ with respect to the intersection pairing. The notation reflects the fact that the image of $\Delta^\tr$ in cohomology is the canonical tensor of the transcendental lattice (which is the orthogonal complement of the Picard lattice). We will denote by $b$ the rank of the transcendental lattice:
\begin{equation}
\label{eqn:b}
b = \langle \Delta^\tr, \Delta \rangle
\end{equation}
and note that it is an integer contained between 2 and 21. The classes \eqref{eqn:transcendental} will be useful for us because relations \eqref{eqn:bv 3} and \eqref{eqn:bv 4} can be rewritten as:
\begin{equation}
\label{eqn:bv 7}
\Delta^\tr \cdot l = \Delta^\tr \cdot c = 0
\end{equation}
Let $R(S^n) \subset A^*(S^n)$ denote the subring generated by the diagonal classes $\Delta_{ij}$ and the classes $l_i,c_i$ for all $1 \leq i < j \leq n$, as $l$ goes over $A^1(S)$. Conjecture \ref{conj:v} is a statement about the injectivity of the restriction of the cycle class map to $R(S^n)$. In \cite{Y}, Yin showed that Conjecture \ref{conj:v} is equivalent to the equality:
\begin{equation}
\label{eqn:yin}
\sum_{\sigma \in \Sigma_{b+1}} \sgn (\sigma) \prod_{i=1}^{b+1} \Delta^\tr_{i,\sigma(i)+b+1} = 0 \in A^*(S^{2(b+1)})
\end{equation}
Above, we write $\Sigma_{b+1}$ for the symmetric group on $b+1$ letters. \\

\subsection{} Given a K3 surface $S$, we let $\Hilb_n = \Hilb_n(S)$ denote the Hilbert scheme parametrizing colength $n$ ideals $I \subset \CO_S$. The following result is classical: \\

\begin{proposition}
\label{prop:basic}

The variety $\eHilb_n$ is smooth and projective of dimension $2n$. \\

\end{proposition}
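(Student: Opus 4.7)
The plan. First, observe that $\eHilb_n$ and $\Hilb_n$ denote the same variety: the preamble macros expand to $\emph{Hilb}_n$ and $\text{Hilb}_n$ respectively, a typographical convention used throughout the paper so that the symbol ``Hilb'' renders in upright font both inside italicized theorem environments and in plain text. The statement to be proved is therefore that $\Hilb_n(S)$, the Hilbert scheme parametrizing length-$n$ closed subschemes of the K3 surface $S$, is smooth, projective, and of dimension $2n$. This is Fogarty's classical theorem, and my strategy is to invoke it while indicating the standard arguments.

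Projectivity follows from Grothendieck's construction. Fixing an ample embedding $S \hookrightarrow \BP^N$, the scheme $\Hilb_n(S)$ embeds as a closed subscheme of $\Hilb^{P \equiv n}(\BP^N)$, which is itself projective (realized via Castelnuovo-Mumford $m$-regularity for $m \gg 0$ as a closed subscheme of a Grassmannian, through the Pl\"ucker embedding of the $m$-th graded pieces of homogeneous ideals). Hence $\Hilb_n(S)$ is projective.

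For the tangent-space dimension, deformation theory at a closed point $[Z] \in \Hilb_n(S)$ with ideal sheaf $I_Z$ gives $T_{[Z]} \Hilb_n(S) \cong \Hom_S(I_Z, \CO_Z)$. Applying $\Hom_S(-, \CO_Z)$ to $0 \to I_Z \to \CO_S \to \CO_Z \to 0$ and using both $H^i(S, \CO_Z) = 0$ for $i \geq 1$ (since $Z$ is zero-dimensional) and the fact that the induced map $\Hom_S(\CO_Z, \CO_Z) \to \Hom_S(\CO_S, \CO_Z)$ is an isomorphism (both sides are $\CO_Z(S)$ of dimension $n$), one reads off $\Hom_S(I_Z, \CO_Z) \cong \Ext^1_S(\CO_Z, \CO_Z)$. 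On the K3 surface $S$, Hirzebruch-Riemann-Roch yields $\chi(\CO_Z, \CO_Z) = 0$, and Serre duality (using $K_S = \CO_S$) gives $\dim \Ext^0_S(\CO_Z, \CO_Z) = \dim \Ext^2_S(\CO_Z, \CO_Z) = n$. Hence $\dim \Ext^1_S(\CO_Z, \CO_Z) = 2n$, so every Zariski tangent space has dimension exactly $2n$.

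The remaining smoothness claim is the genuine content of Fogarty's theorem. The question is \'etale-local on $\Hilb_n(S)$; after disjointly neighborhood-separating the support of $Z$, it reduces to smoothness of $\Hilb_m(\BA^2)$ near an arbitrary point (for $m \leq n$), which is a classical fact provable either by Fogarty's original argument, by the Haiman commuting-matrices / GIT description, or by Nakajima's quiver-variety realization. Combined with the uniform tangent-dimension $2n$, this yields that $\Hilb_n(S)$ is smooth of pure dimension $2n$ at every closed point. The main obstacle is precisely this local smoothness input; once granted, projectivity and the dimension count are formal consequences of the arguments above.
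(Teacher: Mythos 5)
Your proposal is correct, and it matches the paper's (implicit) treatment: the paper offers no proof of this proposition, stating only that the result is classical and attributing the smoothness to Fogarty, exactly the division of labor you identify. The tangent-space computation you carry out (the long exact sequence from $0 \to I_Z \to \CO_S \to \CO_Z \to 0$ identifying $\Hom(I_Z,\CO_Z)$ with $\Ext^1(\CO_Z,\CO_Z)$, which has dimension $2n$ by $\chi(\CO_Z,\CO_Z)=0$ and Serre duality) is essentially reproduced verbatim by the paper later, in Subsection \ref{sub:k3}, when establishing the holomorphic symplectic structure; your reading of $\eHilb_n$ versus $\Hilb_n$ as a purely typographical distinction is also correct.
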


\noindent (The smoothness part of the Proposition above is due to Fogarty). The Hilbert scheme represents the functor of flat families of ideal sheaves, i.e.:
\begin{equation}
\label{eqn:represents}
\text{Maps}(T,\Hilb_n) \cong \Big\{ I \subset \CO_{T \times S} \text{ s.t. } \CO_{T \times S}/I \text{ is locally free of rank }n \text{ on }T \Big\}
\end{equation}
for any scheme $T$. We will use the notation $\CI$ for the universal ideal sheaf:
\begin{equation}
\label{eqn:universal ideal}
\xymatrix{\CI \ar@{..>}[d] \\ \Hilb_n \times S}
\end{equation}
in terms of which the identification \eqref{eqn:represents} is given by:
$$
\left\{ T \xrightarrow{\phi} \Hilb_n \right\} \rightarrow \Big\{ I = (\phi \times \Id_S)^{-1}(\CI) \Big\}
$$
The quotient:
\begin{equation}
\label{eqn:subscheme}
\CO_{\CZ_n} = \CO_{\Hilb_n \times S}/\CI 
\end{equation}
is the structure sheaf of the universal subscheme $\CZ_n \subset \Hilb_n \times S$, namely the codimension 2 subscheme supported on the closed subset of pairs $(I,x)$, where $I \subset \CO_S$ is an ideal and $x$ is a support point of $\CO_S/I$. We will write $\CZ = \sqcup_{n=0}^\infty \CZ_n$. \\

\subsection{} 
\label{sub:taut} 

 Since $\CZ$ is a codimension 2 subscheme, we have:
\begin{align}
&\ch_0 ( \CO_{\CZ} ) = 0 \label{eqn:ch taut 1} \\
&\ch_1 ( \CO_{\CZ} ) = 0 \label{eqn:ch taut 2} \\
&\ch_2 ( \CO_{\CZ} ) = [\CZ] \label{eqn:ch taut 3}
\end{align}
Using the Chern character of $\CO_\CZ$ allows us to define various types of classes in $A^*(\Hilb)$. Recall that $\pi_1, \pi_2 : \Hilb \times S \rightarrow \Hilb, S$ denote the two standard projections, and $R(S) \subset A^*(S)$ denotes the Beauville-Voisin subring:
\begin{equation}
\label{eqn:bv subring}
R(S) = \BQ \cdot 1 \oplus c_1(\Pic(S)) \oplus \BQ \cdot c
\end{equation}




\begin{definition}
\label{def:small tautological}

Let $A^*_{\emph{small}}(\eHilb) \subset \eah$ denote the ring of \textbf{small} tautological classes, i.e. arbitrary sums of products of classes of the form:
\begin{equation}
\label{eqn:small tautological}
\pi_{1*} \Big[ \ech_k ( \CO_{\CZ} ) \cdot \pi_2^*(\gamma) \Big]
\end{equation}
where $k$ ranges over $\BN$ and $\gamma$ ranges over $R(S)$. \\

\end{definition}

\begin{definition}
\label{def:a big}
	
Let $A^*_{\emph{big}}(\eHilb) \subset \eah$ denote the ring of \textbf{big} tautological classes, i.e. arbitrary sums of products of classes of the form:
\begin{equation}
\label{eqn:big classes}
\pi_{1*} \Big[ \ech_{k_1}(\CO_{\CZ})... \ech_{k_t}(\CO_{\CZ}) \cdot \pi_2^*(\gamma) \Big]
\end{equation}
where $t,k_1,...,k_t$ range over $\BN$ and $\gamma$ ranges over $R(S)$. \\
	
\end{definition}

\noindent Tautological classes are closely related to tautological bundles, which are defined for every $n \in \BN$ and any rank $r$ vector bundle $V$ on $S$ by the construction:
$$
V^{[n]} = R\pi_{1*} \left( \CO_{\CZ_n} \otimes \pi_2^*(V) \right)
$$
Note that $V^{[n]}$ is a rank $rn$ vector bundle on $\Hilb_n$, and the Grothendieck- Hirzebruch-Riemann-Roch theorem implies that its Chern character is given by:
$$
\ch(V^{[n]}) = \pi_{1*} \Big( \ch(\CO_{\CZ_n}) \cdot \pi_2^*(\ch(V)) \cdot \pi_2^*(\text{td}(S))  \Big)
$$
If the Chern character of $V$ lies in the Beauville-Voisin subring $R(S)$, then the formula above shows that $\ch(V^{[n]})$ is a small tautological class (because $\text{td}(S) = 1 + 2c$, see \eqref{eqn:todd}). In particular, the first Chern class of $V^{[n]}$ is given by:
\begin{equation}
\label{eqn:c1 taut}
c_1(V^{[n]}) = \pi_{1*}([\CZ_n] \cdot c_1(V)) + \pi_{1*}(\ch_3(\CO_{\CZ_n}) \cdot r)
\end{equation}
The Picard groups of Hilbert schemes of points were described by Fogarty, whose Theorem 6.2 of \cite{Fo} shows that when $S$ is a K3 surface, $A^1(\Hilb_n)$ is generated by \eqref{eqn:c1 taut} as $V$ goes over all the line bundles on $S$. Hence we conclude the following: \\

\begin{proposition}
\label{prop:divisor}

Any divisor class on $\eHilb_n$ is a small tautological class. \\

\end{proposition}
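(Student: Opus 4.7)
The plan is to use the two facts already assembled in the paragraph preceding the statement: Fogarty's description of $A^1(\eHilb_n)$ in terms of first Chern classes of tautological bundles, and the explicit formula \eqref{eqn:c1 taut} for $c_1(V^{[n]})$. Once these are lined up, the proposition reduces to matching the two terms on the right of \eqref{eqn:c1 taut} with the defining expression \eqref{eqn:small tautological} for small tautological classes.

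First I would recall that by Fogarty's Theorem 6.2 of \cite{Fo}, for $S$ a K3 surface, every element of $A^1(\eHilb_n)$ can be written as a $\BQ$--linear combination of classes of the form $c_1(V^{[n]})$, where $V$ ranges over line bundles on $S$. Thus it suffices to check that each such $c_1(V^{[n]})$ lies in $A^*_{\text{small}}(\eHilb_n)$.

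Next, for a line bundle $V$ one has $r = 1$, so formula \eqref{eqn:c1 taut} reads
$$
c_1(V^{[n]}) \;=\; \pi_{1*}\bigl([\CZ_n]\cdot \pi_2^*(c_1(V))\bigr) \;+\; \pi_{1*}\bigl(\ch_3(\CO_{\CZ_n})\bigr).
$$
By \eqref{eqn:ch taut 3} we may substitute $[\CZ_n] = \ch_2(\CO_{\CZ_n})$ in the first term. Since $c_1(V) \in A^1(S) \subset R(S)$ and $1 \in R(S)$, both summands match the shape \eqref{eqn:small tautological}: the first is the small tautological class with $k=2$ and $\gamma = c_1(V)$, while the second is the small tautological class with $k=3$ and $\gamma = 1$. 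Hence $c_1(V^{[n]}) \in A^*_{\text{small}}(\eHilb_n)$, and the proposition follows by linearity.

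There is no real obstacle here; the content of the proposition is essentially a bookkeeping translation from Fogarty's generators to the definition of small tautological classes, with \eqref{eqn:c1 taut} supplying the bridge. The only mildly nontrivial point is that the ``rank term'' $\pi_{1*}(\ch_3(\CO_{\CZ_n}) \cdot r)$ in \eqref{eqn:c1 taut} must be recognized as a small tautological class in its own right (taking $\gamma = 1 \in R(S)$), which is immediate from Definition \ref{def:small tautological}.
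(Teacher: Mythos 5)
Your proof is correct and follows exactly the paper's own route: the paper likewise combines Fogarty's Theorem 6.2 with the GHRR-derived formula \eqref{eqn:c1 taut} (specialized to line bundles) and the identification $[\CZ_n]=\ch_2(\CO_{\CZ_n})$ to exhibit each generator of $A^1(\eHilb_n)$ as a sum of classes of the form \eqref{eqn:small tautological} with $\gamma \in R(S)$. Nothing is missing.
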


\subsection{}
\label{sub:k3} 

Since $S$ is a K3 surface, $\Hilb_n = \Hilb_n(S)$ is holomorphic symplectic (\cite{B}). Let us review this fact, by recalling the explicit construction of the non-degenerate pairing on the tangent bundle of $\Hilb_n$. For simplicity, we will work at the level of an arbitrary closed point $I \in \Hilb_n$, in which case it is known that:
$$
\Tan_I \Hilb_n = \Hom(I,\CO_S/I)
$$
The long exact sequence associated to $0 \rightarrow I \rightarrow \CO_S \rightarrow \CO_S/I \rightarrow 0$ induces:
\begin{multline}
0 \rightarrow \Hom(\CO_S/I,\CO_S/I) \xrightarrow{\cong} \Hom(\CO_S,\CO_S/I) \rightarrow \\ 
\rightarrow \Hom(I,\CO_S/I) \rightarrow \Ext^1(\CO_S/I,\CO_S/I) \label{eqn:long}
\end{multline}
It is easy the observe that the second horizontal arrow is an isomorphism, since:
\begin{equation}
\label{eqn:ext 0}
\Hom(\CO_S/I,\CO_S/I) \cong \CO_S/I
\end{equation}
Note that $\dim_{\BC} \CO_S/I = n$. Moreover, Serre duality and $\CK_S \cong \CO_S$ imply that:
\begin{equation}
\label{eqn:ext 2}
\Ext^2(\CO_S/I,\CO_S/I) \cong (\CO_S/I)^\vee
\end{equation}
is also an $n$--dimensional vector space. Since $\sum_{i=0}^2 (-1)^i \dim_{\BC} \Ext^i(\CO_S/I,\CO_S/I) = 0$ (the quantity $\sum_i (-1)^i \dim \Ext^i(F,G)$ is additive in both arguments, and it is easy to observe that it vanishes on skyscraper sheaves), we conclude that:
\begin{equation}
\label{eqn:ext 1}
\dim_{\BC} \Ext^1(\CO_S/I,\CO_S/I)  = 2n
\end{equation}
Since $\Hilb_n$ is smooth of dimension $2n$, the long exact sequence \eqref{eqn:long} implies that:
\begin{equation}
\label{eqn:tangent}
\Tan_I \Hilb_n \cong \Ext^1(\CO_S/I,\CO_S/I)
\end{equation}
(the isomorphism above is simply the Kodaira-Spencer map, if one regards the Hilbert scheme as the moduli space parametrizing the finite length sheaves $\CO_S/I$). Moreover, Serre duality implies that the vector space $\Ext^1(\CO_S/I,\CO_S/I)$ is self-dual, which proves that $\Hilb_n$ is holomorphic symplectic. \\

\begin{proposition}
\label{prop:chern tangent}

The Chern character of the tangent bundle of $\eHilb_n$ is:
\begin{equation}
\label{eqn:chern tangent}
\ech(\eTan \ \eHilb_n) = \pi_{1*} \Big[ \Big(\ech (\CO_{\CZ_n}) + \ech (\CO_{\CZ_n})^\vee - \ech (\CO_{\CZ_n})  \ech (\CO_{\CZ_n})^\vee  \Big) \pi_2^*(1+2c) \Big] 
\end{equation}
where $\pi_1,\pi_2 : \eHilb_n \times S \rightarrow \eHilb_n, S$ are the standard projections. \\

\end{proposition}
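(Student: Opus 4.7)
The plan is to lift the pointwise deformation-theoretic identification from Subsection \ref{sub:k3} to an equality in the Grothendieck group of $\Hilb_n$, and then apply Grothendieck-Riemann-Roch. The target K-theory identity is
$$[\Tan\ \Hilb_n] = [R\pi_{1*}\CO_{\CZ_n}] + [R\pi_{1*}\CO_{\CZ_n}^\vee] - [R\pi_{1*} R\sHom(\CO_{\CZ_n}, \CO_{\CZ_n})]$$
on $\Hilb_n$, from which \eqref{eqn:chern tangent} will follow by GRR.

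To establish it, I would decompose the complex $R\pi_{1*} R\sHom(\CO_{\CZ_n}, \CO_{\CZ_n})$ via its cohomology sheaves. Because $\sHom(\CO_{\CZ_n}, \CO_{\CZ_n}) \cong \CO_{\CZ_n}$, one has $R^0\pi_{1*}\sHom \cong \pi_{1*}\CO_{\CZ_n}$; the global form of \eqref{eqn:tangent}, obtained from the Kodaira-Spencer map for the universal family of length-$n$ quotients on $\Hilb_n \times S$, gives $R^1\pi_{1*}\sHom \cong \Tan\ \Hilb_n$; and relative Serre duality along $\pi_1$, using $\omega_{\pi_1} = \pi_2^*\omega_S \cong \CO$ since $S$ is K3, identifies $R^2\pi_{1*}\sHom \cong (\pi_{1*}\CO_{\CZ_n})^\vee$. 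Combined with the K-theoretic identity $[R\pi_{1*}\CO_{\CZ_n}^\vee] = [(\pi_{1*}\CO_{\CZ_n})^\vee]$, which again follows from Grothendieck duality along $\pi_1$ (the shift by relative dimension $2$ being trivial in K-theory), one rearranges to obtain the displayed equality.

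Next, I apply Grothendieck-Riemann-Roch to each of the three summands. The relative tangent bundle along $\pi_1$ is $\pi_2^*\Tan_S$, and $\td(S) = 1 + 2c$ by \eqref{eqn:bv 1}, so $\ch(R\pi_{1*}F) = \pi_{1*}(\ch(F) \cdot \pi_2^*(1+2c))$. Applied to $\CO_{\CZ_n}$, to $\CO_{\CZ_n}^\vee$ (noting $\ch(\CO_{\CZ_n}^\vee) = \ch(\CO_{\CZ_n})^\vee$), and to $R\sHom(\CO_{\CZ_n}, \CO_{\CZ_n}) = \CO_{\CZ_n}^\vee \otimes^L \CO_{\CZ_n}$ (valid because $\CO_{\CZ_n}$ admits a finite locally free resolution on the smooth variety $\Hilb_n \times S$, giving Chern character $\ch(\CO_{\CZ_n})^\vee \cdot \ch(\CO_{\CZ_n})$), the three GRR outputs assemble precisely into \eqref{eqn:chern tangent}.

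The main obstacle is the global identification $R^1\pi_{1*}\sHom(\CO_{\CZ_n}, \CO_{\CZ_n}) \cong \Tan\ \Hilb_n$ together with the companion descriptions of $R^0$ and $R^2$. While each is a standard piece of deformation and duality theory, one should verify that the relative Kodaira-Spencer map is globally an isomorphism of sheaves on $\Hilb_n$, and that relative Serre duality applies cleanly despite the universal subscheme $\CZ_n$ being in general non-lci in $\Hilb_n \times S$. Once these cohomological identifications are in place, the rest is a direct GRR computation.
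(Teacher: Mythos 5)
Your proposal is correct and follows essentially the same route as the paper: the pointwise identity $[\RHom(\CO_S/I,\CO_S/I)] = [\CO_S/I] + [\CO_S/I]^\vee - [\Tan_I\Hilb_n]$ coming from \eqref{eqn:ext 0}, \eqref{eqn:ext 2}, \eqref{eqn:tangent}, globalized over $\Hilb_n$ to a $K$-theory identity and then fed into Grothendieck--Riemann--Roch with $\td(S) = 1+2c$. The only difference is that you spell out the globalization step (relative Ext sheaves, relative Kodaira--Spencer, Grothendieck duality along the smooth morphism $\pi_1$ --- so the non-lci nature of $\CZ_n$ is in fact harmless) which the paper passes over in one sentence.
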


\begin{proof} If we combine \eqref{eqn:ext 0}, \eqref{eqn:ext 2} and \eqref{eqn:ext 1}, we conclude the following equality in the Grothendieck group of locally free sheaves on $\Hilb_n$:
$$
[\RHom(\CO_S/I, \CO_S/I)] = [\CO_S/I] + [\CO_S/I]^\vee - [\Tan_I \Hilb_n]
$$
The version of this equality as $I$ varies over the Hilbert scheme yields:
$$
[\Tan \ \Hilb_n] = R\pi_{1*} \left(\left[ \frac {\CO_{\Hilb_n \times S}}{\CI} \right] + \left[ \frac {\CO_{\Hilb_n \times S}}{\CI} \right]^\vee - \left[\frac {\CO_{\Hilb_n \times S}}{\CI} \right] \cdot \left[ \frac {\CO_{\Hilb_n \times S}}{\CI} \right]^\vee \right)
$$
The Grothendieck-Hirzebruch-Riemann-Roch theorem applied to the formula above yields \eqref{eqn:chern tangent}, as soon as one recalls that the Todd genus of a K3 surface is:
\begin{equation}
\label{eqn:todd}
\td(S) = 1 + \frac {c_1(S)}2 + \frac {c_1(S)^2+c_2(S)}{12} = 1 + 2c
\end{equation}
(the fact that $c_2(S) = 24c$ is precisely \eqref{eqn:bv 1}).

\end{proof}

\section{Representation theory of Hilbert schemes}
\label{sec:rep theory}

\subsection{}
\label{sub:nakajima}

Let us recall the Heisenberg algebra action introduced independently by Grojnowski (\cite{G}) and Nakajima (\cite{Nak}) on the Chow groups of Hilbert schemes on an arbitrary smooth projective surface $S$. We will mostly follow the presentation of Nakajima in the current subsection. For any $n \in \BN$, consider the closed subscheme:
$$
\Hilb_{d,d+n} = \Big\{(I \supset I') \text{ s.t. } I/I' \text{ is supported at a single }x \in S \Big\} \subset \Hilb_d \times \Hilb_{d+n}
$$
endowed with projection maps:
\begin{equation}
\label{eqn:diagram zk}
\xymatrix{& \Hilb_{d,d+n} \ar[ld]_{p_-} \ar[d]^{p_S} \ar[rd]^{p_+} & \\ \Hilb_{d} & S & \Hilb_{d+n}}
\end{equation}
that keep track of $I,x$ and $I'$, respectively. It was shown in \cite{Nak} that the locus above has dimension $2d+n+1$, and so Nakajima used it to define the correspondences:
\begin{equation}
\label{eqn:nakajima}
A^*(\Hilb) \xrightarrow{\fq_{\pm n}} A^*(\Hilb \times S)
\end{equation}
where $A^*(\Hilb) = \bigoplus_{d=0}^\infty A^*(\Hilb_d)$, given by:
\begin{align}
&\fq_n = (p_+ \times p_S)_* \circ p_-^* \label{eqn:nak exp 1} \\
&\fq_{-n} = (-1)^n (p_- \times p_S)_* \circ p_+^* \label{eqn:nak exp 2}
\end{align}
We also set $\fq_0 = 0$. Loosely speaking, one may think of the operators $\fq_n$ as a family of endomorphisms of $A^*(\Hilb)$ indexed by $A^*(S)$, so we write for any $\gamma \in A^*(S)$:
\begin{equation}
\label{eqn:nakajima class}
\fq_n(\gamma) = \pi_{1*} (\fq_n \cdot \pi_2^*(\gamma))
\end{equation}
as an operator $A^*(\Hilb) \rightarrow A^*(\Hilb)$, where $\pi_1,\pi_2 : \Hilb \times S \rightarrow \Hilb, S$ are the standard projections. The Heisenberg algebra action is encoded in the fact that the operators $\fq_n$ satisfy the following commutation relations (see Theorem 8.13 and Remark 8.15 (2) of \cite{Nak book} for reference):
\begin{equation}
\label{eqn:heisenberg}
[\fq_n, \fq_{n'}] = n \delta_{n+n'}^0 \cdot \text{Id}_\Hilb \times [\Delta]
\end{equation}
where both sides of the equation are $\BQ$--linear maps $A^*(\Hilb) \rightarrow A^*(\Hilb \times S \times S)$. In terms of the endomorphisms \eqref{eqn:nakajima class}, the commutation relation \eqref{eqn:heisenberg} reads:
\begin{equation}
\label{eqn:heisenberg alt}
[\fq_n(\gamma), \fq_{n'}(\gamma')] = n \delta_{n+n'}^0 \langle \gamma, \gamma' \rangle \cdot \text{Id}_\Hilb
\end{equation}
where $\langle \cdot, \cdot \rangle$ is the intersection pairing on $S$. 

\subsection{} 

We may generalize the notation above to products of Nakajima operators:
\begin{equation}
\label{eqn:composition 1}
\fq_{n_1}...\fq_{n_k} : A^*(\Hilb) \longrightarrow A^*(\Hilb \times S^k)
\end{equation}
where the convention is that the operator $\fq_{n_i}$ acts in the $i$-th factor of $S^k = S \times ... \times S$. There are two related operations that we will apply in conjunction with products as \eqref{eqn:composition 1}. The first one is to restrict the composition to the smallest diagonal:
\begin{equation}
\label{eqn:composition 2}
\fq_{n_1}...\fq_{n_k} \Big|_\Delta : A^*(\Hilb) \xrightarrow{\fq_{n_1}...\fq_{n_k}} A^*(\Hilb \times S^k) \xrightarrow{\text{Id}_\Hilb \boxtimes \Delta^*} A^*(\Hilb \times S)
\end{equation}
and the second one is to use any $\Gamma \in A^*(S^k)$ to yield endomorphisms of $A^*(\Hilb)$:
\begin{equation}
\label{eqn:composition 3}
\fq_{n_1}...\fq_{n_k}(\Gamma) = \pi_{1*} (\fq_{n_1}...\fq_{n_k} \cdot \pi_2^*(\Gamma))
\end{equation}
where $\pi_1, \pi_2 : \Hilb \times S^k \rightarrow \Hilb, S^k$ denote the standard projections. The two operations \eqref{eqn:composition 2} and \eqref{eqn:composition 3} are related by the formula:
$$
\fq_{n_1}...\fq_{n_k} \Big|_\Delta(\gamma) = \fq_{n_1}...\fq_{n_k}(\Delta_*(\gamma))
$$
for any $\gamma \in A^*(S)$, where $\Delta$ refers to the smallest diagonal embedding $S \hookrightarrow S^k$. \\

\subsection{} The $\BQ$-vector space $A^*(\Hilb) = \oplus_{d=0}^\infty A^*(\Hilb_d)$ is graded by $d$, and it is clear from \eqref{eqn:diagram zk} that the operators $\fq_n$ increase this grading by $n$. When writing a product of the form \eqref{eqn:composition 1}, one may always use \eqref{eqn:heisenberg} to reorder all the terms, in such a way that $n_1 \geq ... \geq n_k$. More concretely, let us consider the \emph{normal-ordered product}:
\begin{equation}
\label{eqn:normal}
:\fq_a \fq_b: = \begin{cases} \fq_a \fq_b &\text{if } a \geq b \\ \fq_b \fq_a &\text{if } a < b \end{cases}
\end{equation}
The obvious generalization defines the normal-ordered products of several Heisenberg operators $:\fq_{n_1}...\fq_{n_k}:$. Note that the normal-ordered product only differs from the usual product if an operator $\fq_n$ with $n<0$ (called an \emph{annihilation operator}) is to the left of an operator $\fq_n$ with $n>0$ (called a \emph{creation operator}). Therefore, the normal-ordering convention can be explained, in words, as saying that all creation operators should be placed to the left of all annihilation operators. \\

\noindent It is easy to see that infinite expressions such as $\sum_{a+b = k}^{a,b \in \BZ} \fq_a \fq_b$ are not well defined on $\ah$. However, they do become well-defined if we normal-order all the products, as in the following analogues of Lehn's operators from cohomology:
\begin{equation}
\label{eqn:vir def}
\fL_n = \frac 12 \sum_{a+b = n}^{a,b \in \BZ} : \fq_a \fq_b: \Big|_\Delta 
\end{equation}
The reason why the $\BQ$--linear map $\fL_n : \ah \rightarrow \ahs$ is well-defined is that all the annihilation operators are to the right of all creation operators, and therefore $\fL_n$ acts by a finite sum on any given vector in $\ah$. The following formulas are straightforward consequences of \eqref{eqn:heisenberg} and \eqref{eqn:heisenberg alt}, and they are part of the fundamental motivation for Lehn's introduction of the operators \eqref{eqn:vir def}:
\begin{equation}
\label{eqn:heis vir}
[\fL_n, \fq_{n'}] = - n' \Delta_*(\fq_{n+n'}) \quad \Rightarrow \quad [\fL_n(1), \fq_{n'}(\gamma)] = -n'\fq_{n+n'}(\gamma) 
\end{equation}
\begin{multline}
[\fL_n, \fL_{n'}] = (n-n')\Delta_*(\fL_{n+n'}) - \frac {n^3-n}{12} \delta_{n+n'}^0 \cdot \text{Id}_\Hilb \boxtimes [\Delta_*(e)] \label{eqn:vir vir} \\
\Rightarrow \quad [\fL_n(1), \fL_{n'}(1)] = (n-n')\fL_{n+n'}(1) - \frac {n^3-n}{12} \delta_{n+n'}^0 \cdot e
\end{multline}
where $e = c_2(\Tan_S)$. \\

	
	

\subsection{} Let us now consider the operators of multiplication by the Chern classes of the universal subscheme $\CZ \hookrightarrow \Hilb \times S$:
$$
\fG_k : \ah \xrightarrow{\pi_1^*} \ahs \xrightarrow{\cdot \ch_k(\CO_{\CZ})} \ahs 
$$
Because of \eqref{eqn:ch taut 1} and \eqref{eqn:ch taut 2}, we have $\fG_0 = \fG_1 = 0$. As before, we will write:
$$
\fG_k(\gamma) : \ah \xrightarrow{\fG_k} \ahs \xrightarrow{\cdot \pi_2^*(\gamma)} \ahs \xrightarrow{\pi_{1*}} \ah 
$$
for any $\gamma \in \as$. Alternatively, $\fG_k(\gamma)$ is the operator of multiplication by the small tautological class $\pi_{1*}(\ch_k(\CO_{\CZ})\cdot \pi_2^*(\gamma))$. One of the main goals of \cite{LQW} was to systematize the algebra generated by the operators $\fq_n$ and $\fG_k$, and the structure they found was that of the deformed $W_{1+\infty}$ algebra. Their construction was done in cohomology, but we will consider the exact same operators between Chow groups, and use them to prove Theorem \ref{thm:lqw}. Define:
\begin{equation}
\label{eqn:lqw}
\fJ_n^k : A^*(\Hilb) \rightarrow A^*(\Hilb \times S)
\end{equation}
\begin{equation}
\label{eqn:formula j}
\fJ_n^k = k! \left( - \sum^{|\lambda| = n}_{l(\lambda) = k+1} \frac {\fq_\lambda}{\lambda!}  \Big|_\Delta + \sum^{|\lambda| = n}_{l(\lambda) = k-1} (s(\lambda)+n^2-2) \frac {\pi_2^*(e) \cdot \fq_\lambda}{24\lambda!}  \Big|_\Delta \right) 
\end{equation}
(note that our $\fJ_n^k$ are equal to the $\fJ_{-n}^k$ of \cite{LQW}) where $\lambda$ goes over all partitions of $\BZ \backslash \{0\}$. Let us explain the notation in \eqref{eqn:formula j}. Any partition $\lambda$ can be described as:
$$
\lambda = (...,(-2)^{m_{-2}}, (-1)^{m_{-1}}, 1^{m_1}, 2^{m_2},...)
$$
for $...m_{-2},m_{-1},m_1,m_2,... \in \BN \sqcup 0$, and we write $\fq_\lambda = ... \fq_2^{m_2} \fq_1^{m_1} \fq_{-1}^{m_{-1}} \fq_{-2}^{m_{-2}}...$ and:
$$
l(\lambda) = \sum_{i \in \BZ \backslash \{0\}} m_i, \quad |\lambda| = \sum_{i \in \BZ \backslash \{0\}} i m_i, \quad s(\lambda) = \sum_{i \in \BZ \backslash \{0\}} i^2 m_i, \quad \lambda! = \prod_{i \in \BZ \backslash \{0\}} m_i!
$$
We will now prove that the operators \eqref{eqn:lqw} satisfy the properties of Theorem \ref{thm:lqw}. \\ 


	

	

\begin{proof} \emph{of Theorem \ref{thm:lqw}:} The fact that the operators \eqref{eqn:formula j} satisfy relations \eqref{eqn:lqw rel 1}--\eqref{eqn:lqw rel 5} is proved exactly as in \loccitt, since the only input necessary for their computation is the commutation relation \eqref{eqn:nakajima} of Nakajima operators. In particular, we have the following special cases of \eqref{eqn:lqw rel 1}--\eqref{eqn:lqw rel 5}, for all $a,k \geq 0$:
\begin{align}
&[\fJ_{\pm 1}^k, \fJ_0^2] = \mp 2 \Delta_* \left( \fJ_1^{k+1} \right) \label{eqn:lqw 4} \\
&[\fJ_1^a, \fJ_{-1}^{k-a}] = - k \Delta_* \left( \fJ_0^{k-1} \right) + k(k-1)(k-2) \Delta_* \left( \frac {\pi_2^*(e)}{12} \cdot \fJ_0^{k-3} \right)\label{eqn:lqw 5}
\end{align}
(the sign discrepancy between the formulas above and those of \loccit stems from the fact that our $\fJ_n^k$ are equal to their $\fJ_{-n}^k$). Therefore, to prove Theorem \ref{thm:lqw}, we only need to check \eqref{eqn:lqw 1} and \eqref{eqn:lqw 3}. The former is immediate, so it remains to prove the latter. The Grothendieck-Hirzebruch-Riemann-Roch theorem, together with $\ch_0(\CO_{\CZ}) = \ch_1(\CO_{\CZ}) = 0$, implies that:
$$
\pi_{1*} \left( \frac {\fJ_0^2}2  \right) = \pi_{1*} \left[ \text{multiplication by } \ch_3 \left( \CO_{\CZ_d} \right)  \right] = \text{multiplication by }c_1 (\CO_S^{[d]}) =: \fd
$$
where $\pi_1 : \Hilb_d \times S \rightarrow \Hilb_d$ is the standard projection. Note that \eqref{eqn:formula j} gives:
\begin{align*}
&\fJ_1^0 = - \fq_1 = - (p_+ \times p_S)_* \circ p_-^* \\
&\fJ_{-1}^0 = - \fq_{-1} = (p_- \times p_S)_* \circ p_+^*
\end{align*}
respectively, with the notation of \eqref{eqn:diagram zk}. Let: 
$$
\fr : \bigoplus_{d=0}^\infty A^*(\Hilb_{d,d+1}) \rightarrow \bigoplus_{d=0}^\infty A^*(\Hilb_{d,d+1})
$$
denote the operator of multiplication by $c_1(\CL)$, where $\CL$ is the tautological line bundle on $\Hilb_{d,d+1}$ whose fiber over $(I \supset I')$ is $I/I'$. Then we claim that:
\begin{align}
&\fJ_1^k = - (p_+ \times p_S)_* \circ \fr^k \circ p_-^* \label{eqn:higher nakajima 1} \\
&\fJ_{-1}^k = (p_- \times p_S)_* \circ \fr^k \circ p_+^* \label{eqn:higher nakajima 2}
\end{align}
Indeed, formulas \eqref{eqn:higher nakajima 1} and \eqref{eqn:higher nakajima 2} follow by comparing the fact that:
$$
[\fJ_{\pm 1}^k, \fd] = \mp \fJ_{\pm 1}^{k + 1} 
$$
(which follows from \eqref{eqn:lqw rel 1}) to the geometrically straightforward fact that:
$$
\left[ (p_\pm \times p_S)_* \circ \fr^k \circ p_\mp^* , \fd \right] = \mp (p_\pm \times p_S)_* \circ \fr^{k+1} \circ p_\mp^*
$$
Let us prove formula \eqref{eqn:lqw 3} by induction on $k$. The base cases $k=1$ and $k=2$ are precisely \eqref{eqn:degree} and \eqref{eqn:lehn}, respectively. As for the induction step, it is enough to invoke the $a=2$ case of \eqref{eqn:lqw 5} and prove the following equality of operators $A^*(\Hilb_d) \rightarrow A^*(\Hilb_d \times S \times S)$, for all $a,b \geq 0$:
\begin{multline}
\Big[ (p_+ \times p_S)_* \circ \fr^a \circ p_-^* , (p_- \times p_S)_* \circ \fr^b \circ p_+^* \Big] = \\
= \text{multiplication by } (a+b)! \cdot \Delta_*(\ch_{a+b}  (\CO_{\CZ})) \label{eqn:want}
\end{multline}
The left-hand side of \eqref{eqn:want} is the difference of operators:
\begin{align}
&(p_+ \times p_{S_1} \times \text{Id}_{S_2})_* \circ \fr^a \circ (p_- \times \text{Id}_{S_2})^* \circ (p_- \times p_{S_2})_* \circ \fr^b \circ p_+^* \label{eqn:comp 1} \\
&(p_- \times \text{Id}_{S_1} \times p_{S_2})_* \circ \fr^b \circ (p_+ \times \text{Id}_{S_1})^* \circ (p_+ \times p_{S_1})_* \circ \fr^a \circ p_-^* \label{eqn:comp 2}
\end{align}
(we write $S_1 = S_2 = S$, in order to differentiate between the two factors of the surface that appear in $\Hilb_d \times S_1 \times S_2$). As a cycle inside $\Hilb_d \times \Hilb_d \times S_1 \times S_2$, the composition \eqref{eqn:comp 1} (respectively \eqref{eqn:comp 2}) is supported on the locus $(I_1,I_2,x_1,x_2)$ such that there exists $I_1,I_2 \subset J$ (respectively $J' \subset I_1,I_2$) with $J/I_1 \cong \BC_{x_2}, J/I_2 \cong \BC_{x_1}$ (respectively $I_1/J' \cong \BC_{x_1}, I_2/J' \cong \BC_{x_2}$). On the open subset $(I_1,x_1) \neq (I_2,x_2)$, the aforementioned two loci are isomorphic via:
$$
(I_1,I_2 \subset J) \leadsto (J' \subset I_1,I_2) , \qquad J' = I_1 \cap I_2 \text{ inside }J
$$
This implies that the difference of \eqref{eqn:comp 1} and \eqref{eqn:comp 2}, i.e. the left-hand side of \eqref{eqn:want}, is a cycle supported on the diagonal $\Hilb_d \times S \hookrightarrow \Hilb_d \times \Hilb_d \times S \times S$. Hence:
\begin{equation}
\label{eqn:want 2}
\text{left-hand side of \eqref{eqn:want}} = \Big( \text{Id}_\Hilb \times \Delta \Big)_* \circ \Big( \text{multiplication by }\Gamma \Big)
\end{equation}
for some $\Gamma \in A^*(\Hilb_d \times S)$. Therefore, to prove \eqref{eqn:want} it suffices to show that:
\begin{equation}
\label{eqn:want 3}
\Gamma = (a+b)! \cdot \ch_{a+b}  \left( \CO_{\CZ_d} \right) 
\end{equation}
To prove that the class $\Gamma$ of \eqref{eqn:want 2} is given by \eqref{eqn:want 3}, it is enough to work out how the equality \eqref{eqn:want 2} of operators acts on the unit class $1 \in A^*(\Hilb_d)$. Explicitly, this boils down to the following computation, which will be proved in Subsection \ref{sub:nested scheme}: \\

\begin{claim}
\label{claim}

The following identity holds in $A^*(\eHilb_d \times S_1 \times S_2)$:
\begin{multline}
\label{eqn:want 4}
(p_+ \times p_{S_1} \times \emph{Id}_{S_2})_* \circ \fr^a \circ (p_- \times \emph{Id}_{S_2})^* \circ (p_- \times p_{S_2})_* (c_1(\CL)^b) - \\
- (p_- \times \emph{Id}_{S_1} \times p_{S_2})_* \circ \fr^b \circ (p_+ \times \emph{Id}_{S_1})^* \circ (p_+ \times p_{S_1})_* (c_1(\CL)^a) = \\ = (a+b)! \cdot \Delta_* \left( \ech_{a+b}  \left( \CO_{\CZ_d} \right) \right) 
\end{multline}

\end{claim}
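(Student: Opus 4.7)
The plan is to unwind both operators on the LHS of \eqref{eqn:want 4} as pushforwards from fibre products of nested Hilbert schemes, show that they agree on the complement of the small diagonal, and identify the residual class by an intersection-theoretic computation along the diagonal. Concretely, I would apply the two compositions \eqref{eqn:comp 1}, \eqref{eqn:comp 2} to $1 \in A^*(\eHilb_d)$ and use proper base change to rewrite each as a pushforward from an explicit fibre product. The first term becomes the pushforward of $c_1(\CL_1^Y)^b c_1(\CL_2^Y)^a$ from $Y = \eHilb_{d-1,d} \times_{\eHilb_{d-1}} \eHilb_{d-1,d}$, which parametrizes $(J, I_1, I_2, x_1, x_2)$ with $J/I_i \cong \BC_{x_i}$, along $(J, I_1, I_2, x_1, x_2) \mapsto (I_1, x_1, x_2)$, where $\CL_i^Y = J/I_i$. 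The second term is similarly a pushforward of $c_1(\CL_1^Z)^a c_1(\CL_2^Z)^b$ from $Z = \eHilb_{d,d+1} \times_{\eHilb_{d+1}} \eHilb_{d,d+1}$, parametrizing $(J', I_1, I_2, x_1, x_2)$ with $I_i/J' \cong \BC_{x_i}$, along $(J', I_1, I_2, x_1, x_2) \mapsto (I_2, x_1, x_2)$.

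Over the open locus $U$ where $(I_1, x_1) \neq (I_2, x_2)$, I would establish an isomorphism $Y|_U \cong Z|_U$ via $J = I_1 + I_2 \leftrightarrow J' = I_1 \cap I_2$ (along with the swap $I_1 \leftrightarrow I_2$). The second isomorphism theorem provides canonical identifications $\CL_i^Y = J/I_i \cong I_{3-i}/J' = \CL_{3-i}^Z$, matching the two pushed-forward classes; since the maps to $\eHilb_d \times S_1 \times S_2$ also agree, the difference of pushforwards vanishes on $U$. Consequently, the LHS equals $\Delta_*(\Gamma)$ for some $\Gamma \in A^*(\eHilb_d \times S)$, and the Claim reduces to showing $\Gamma = (a+b)! \cdot \ech_{a+b}(\CO_{\CZ_d})$.

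To identify $\Gamma$, I would pass to the generating function
\[
\sum_{a, b \geq 0} \frac{s^b t^a}{a!\,b!} \cdot \mathrm{LHS}(a,b) \;=\; \Delta_*\!\left( \sum_{k \geq 0} (s + t)^k \, \ech_k(\CO_{\CZ_d}) \right),
\]
whose equivalence to the Claim comes from the binomial expansion of $(s+t)^{a+b}$. The left-hand side is the difference of pushforwards of the exponential characters $e^{s c_1(\CL_1) + t c_1(\CL_2)}$ from $Y$ and $Z$, which by the previous step is supported on the diagonal. The computation of the residual contribution reduces to the geometry of $(p_+, p_S): \eHilb_{d-1, d} \to \eHilb_d \times S$, a proper birational morphism whose image is the universal subscheme $\CZ_d$, together with the analogous map for $\eHilb_{d, d+1}$. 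Combining a local intersection-theoretic analysis with Grothendieck--Hirzebruch--Riemann--Roch applied to $\CO_{\CZ_d}$ should yield the target generating identity.

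\textbf{Main obstacle.} The principal difficulty is this last step. Since $\CZ_d \subset \eHilb_d \times S$ is not regularly embedded for $d \geq 2$, one cannot directly invoke a naive blow-up or Segre-class formula; the computation must be performed on the smooth ambient spaces $Y$ and $Z$ and matched to a characteristic class of the coherent sheaf $\CO_{\CZ_d}$. The appearance of the combinatorial factor $(a+b)!$ (rather than $a!\,b!$) encodes the symmetric role played by $c_1(\CL_1^Y)$ and $c_1(\CL_2^Y)$ after the identification in the second step, and becomes transparent only in the generating-function formulation, where the relevant class depends solely on $s+t$.
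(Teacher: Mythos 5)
Your reduction to a class supported on the diagonal is sound (and the exchange $J = I_1 + I_2 \leftrightarrow J' = I_1 \cap I_2$ with the second isomorphism theorem is exactly how the paper argues), but note that this step is \emph{not} the content of the Claim: in the paper it is carried out in the proof of Theorem \ref{thm:lqw} \emph{before} the Claim is stated, precisely in order to reduce \eqref{eqn:want} to \eqref{eqn:want 3}. The Claim exists to supply the one thing your proposal does not: the computation of the diagonal class $\Gamma$ and its identification with $(a+b)!\cdot \ech_{a+b}(\CO_{\CZ_d})$. Your final step --- ``combining a local intersection-theoretic analysis with Grothendieck--Hirzebruch--Riemann--Roch should yield the target'' --- is the entire difficulty, and you correctly flag it as the main obstacle without resolving it. As stated, there is no mechanism for extracting a characteristic class of the non-l.c.i.\ sheaf $\CO_{\CZ_d}$ from the excess of the fibre products $Y$ and $Z$ along their diagonals (which are themselves singular fibre products whose components and dimensions would each require an analysis of the kind done in Lemma \ref{lem:main}); so the proof has a genuine gap at its crucial point. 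There is also a small bookkeeping slip in the exponents ($Y$ should carry $c_1(\CL_1^Y)^a c_1(\CL_2^Y)^b$ with your labelling of which ideal is the output), though this does not affect the support argument.

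The paper's route avoids the diagonal-excess computation entirely. Using the two-term locally free resolution $0 \to \CW \to \CV \to \CI \to 0$ and the realizations of $\eHilb_{d,d+1}$ as zero loci of regular sections in $\BP(\CV)$ and $\BP({\CW'}^{\vee}\otimes \CK_S)$ (Propositions \ref{prop:1 minus}, \ref{prop:1 plus}), one gets the closed Segre-class formulas \eqref{eqn:segre 1}--\eqref{eqn:segre 2} expressing $(p_\pm \times p_S)_*(c_1(\CL)^k)$ as Chern classes of $\pm\CI$. The short exact sequence $0 \to p_+^*(\CI) \to p_-^*(\CI) \to \CL \otimes (p_S\times \mathrm{Id})^*(\CO_\Delta) \to 0$ then yields \eqref{eqn:fra}, which makes both compositions \eqref{eqn:comp 1} and \eqref{eqn:comp 2} explicitly computable as polynomials in $c_\bullet(\CI_1)$, $c_\bullet(-\CI_2)$ and $[\Delta]$; the off-diagonal terms cancel identically and the diagonal term is matched to $(a+b)!\,\ech_{a+b}(\CO_{\CZ_d})$ by the formal identity for the coefficient of $z^{a+b-1}$ in $\frac{d}{dz}\log c(\CI,z)$ --- in effect, the generating-function simplification in $s+t$ that you anticipated, but realized as an algebraic identity of Chern polynomials rather than a geometric residue computation. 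To complete your argument you would either need to carry out the excess-intersection analysis on $Y$ and $Z$ honestly, or switch to this Segre-class computation.
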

	
\end{proof}

\section{The proof of the main Theorem}
\label{sec:proof}

\subsection{} 

Let us consider the following operators, in the notation \eqref{eqn:nakajima class} and \eqref{eqn:composition 3}:
\begin{align} 
&A^*(\Hilb) \xrightarrow{\fq_n(\gamma)} A^*(\Hilb) & &\forall \ \gamma \in R(S), \ n \in \BZ\backslash 0 \label{eqn:heis} \\
&A^*(\Hilb) \xrightarrow{L_n} A^*(\Hilb) & &L_n = \frac 12 \sum_{k+l = n} :\fq_k \fq_l:(\Delta^\trans) \label{eqn:vir}
\end{align}
where $\Delta^\tr$ denotes the class \eqref{eqn:transcendental}. Because of relations \eqref{eqn:bv 7} and \eqref{eqn:heis vir}, we have:
\begin{equation}
\label{eqn:commute}
[L_n, \fq_m(\gamma)] = 0 \qquad \forall \ n,m \in \BZ\backslash 0, \ \gamma \in R(S)
\end{equation}
and therefore the algebra generated by the operators \eqref{eqn:heis} and \eqref{eqn:vir} is:
$$
U(\heis \times \vir)
$$
where the Virasoro algebra with central charge $b$ is: 
$$
\vir = \BQ \Big \langle L_n \Big \rangle_{n \in \BZ} \Big / \left( [L_n,L_{n'} ] - (n-n')L_{n+n'} + \frac {n^3-n}{12} \delta_{n+n'}^0 b \right)
$$
(where $b \in \{2,...,21\}$ is the rank \eqref{eqn:b} of the transcendental lattice) and $\heis$ is the tensor product of $24-b = \dim_{\BQ} R(S)$ copies of the Heisenberg algebra. Explicitly, $\heis$ is generated by symbols $\fq_n(\gamma)$ as $n \in \BZ\backslash 0$ and $\gamma$ goes over a basis of $R(S)$, modulo relations \eqref{eqn:heisenberg alt}. \\

\begin{proposition}
\label{prop:in the algebra}
	
For any $k > 1$ and $\gamma \in R(S)$, the operators $\fG_k(\gamma)$ lie in the algebra $U(\emph{Heis} \times \emph{Vir})$. In virtue of Definition \ref{def:small tautological}, this implies that the operator of multiplication by any small tautological class lies in $U(\emph{Heis} \times \emph{Vir})$. \\
	
\end{proposition}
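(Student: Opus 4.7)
The plan is to prove by strong induction on $k \geq 2$ that $\fG_k(\gamma) \in \CA := U(\heis \times \vir)$ for every $\gamma \in R(S)$. The second assertion of the proposition follows at once: the generating small tautological classes of \eqref{eqn:small tautological} are precisely $\fG_k(\gamma)(1)$, and $\fG_k(\gamma)$ is by definition the operator of multiplication by this class, so multiplication by an arbitrary product of small tautological classes is a composition of $\fG_{k_i}(\gamma_i)$'s, which again lies in $\CA$.

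For the base case $k = 2$, Theorem \ref{thm:lehn} gives $\fG_2(\gamma) = -\sum_{n \geq 1} \fq_n \fq_{-n}(\Delta_*(\gamma))$. Substituting $\Delta = \Delta^{\tr} + c_1 + c_2 + \sum_p l_{(p)1} l^{(p)}_2$ from \eqref{eqn:transcendental}, together with the K3 identities for $\Delta_*(l)$, $\Delta_*(c)$ derived from \eqref{eqn:bv 3}--\eqref{eqn:bv 4}, decomposes $\fG_2(\gamma)$ into at most one summand $-L_0$ (present only when $\gamma = 1$, using the identity $L_0 = \sum_{n \geq 1} \fq_n \fq_{-n}(\Delta^\tr)$ which follows from the definition \eqref{eqn:vir} and the normal-ordering convention \eqref{eqn:normal}), plus finitely many normal-ordered sums of products $\fq_n(r) \fq_{-n}(r')$ with $r, r' \in R(S)$. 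All terms lie in $\CA$.

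For the inductive step, assume $\fG_j(\gamma') \in \CA$ for $2 \leq j \leq k$ and $\gamma' \in R(S)$. Relation \eqref{eqn:lqw 3} gives $\fG_{k+1}(\gamma) = \tfrac{1}{k!} \fJ_0^k(\gamma) - \tfrac{1}{12} \fG_{k-1}(e \gamma)$, and since $e = 24c \in R(S)$ on a K3 surface, $e \gamma \in R(S)$ and the second term lies in $\CA$ by induction. So it suffices to show $\fJ_0^k(\gamma) \in \CA$. By the explicit formula \eqref{eqn:formula j}, $\fJ_0^k(\gamma)$ is a linear combination of operators of the form $\fq_\lambda(\Delta_*(\gamma'))$ with $\gamma' \in \{\gamma, e \gamma\} \subset R(S)$, $\lambda$ a partition of $0$ with $l(\lambda) \in \{k-1, k+1\}$, and $\Delta : S \hookrightarrow S^{l(\lambda)}$ the small diagonal. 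The key step is to expand $\Delta_*(\gamma')$ in $A^*(S^{l(\lambda)})$ using the K3-specific identity \eqref{eqn:bv 6} (together with \eqref{eqn:bv 3}--\eqref{eqn:bv 4} to multiply by $\gamma' \in \{1, l, c\}$) and then substitute \eqref{eqn:transcendental} for each resulting $\Delta_{ij}$-factor. Because \eqref{eqn:bv 6} produces monomials with at most one $\Delta_{ij}$, every summand of $\fq_\lambda(\Delta_*(\gamma'))$ has at most one $\fq_{n_i} \fq_{n_j}(\Delta^\tr)$ factor, the remaining factors being operators $\fq_n(r)$ with $r \in R(S)$. Monomials with no transcendental factor lie in $U(\heis) \subset \CA$. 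For the monomials with one transcendental factor, \eqref{eqn:bv 7}---equivalently \eqref{eqn:commute}---allows each $\fq_n(r)$ to be commuted past $\fq_{n_i} \fq_{n_j}(\Delta^\tr)$; summing over the positions $(i,j)$ and over $(n_i, n_j)$ with fixed $n_i + n_j = m$, the sub-sum reconstitutes the Virasoro generator $L_m$ from \eqref{eqn:vir} (up to combinatorial factors), leaving a normal-ordered combination of $\fq_n(r)$'s and $L_m$'s, which lies in $\CA$.

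The main technical obstacle is this last combinatorial identification: checking that after applying \eqref{eqn:bv 6}, performing the Künneth decomposition, and summing over partitions $\lambda$ (including the $(s(\lambda) - 2)$ weights in the second sum of \eqref{eqn:formula j}), the transcendental contributions package precisely into the Virasoro generators $L_m$, with matching combinatorial coefficients and consistent normal-orderings, leaving no anomalous residual terms outside $\CA$. Once this bookkeeping is verified, the induction closes and Proposition \ref{prop:in the algebra} follows.
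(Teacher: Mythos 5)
Your proposal is correct and follows essentially the same route as the paper's proof: express $\fG_k$ via \eqref{eqn:lqw 3} and the explicit formula \eqref{eqn:formula j} as normal-ordered Nakajima monomials evaluated on small diagonals, split each small diagonal into pairwise diagonals and $c$-classes via \eqref{eqn:bv 6}, and observe that each resulting monomial carries at most one $\Delta$-type factor, whose sum over $n_i+n_j$ fixed reassembles a Virasoro generator while the commutators needed for normal-ordering land in $U(\heis)$ by \eqref{eqn:heis vir}. The only cosmetic differences are that you organize the $\fG_{k-1}(e\gamma)$ correction by induction (the paper absorbs it directly, since $e\cdot\Delta_{12\dots j}$ collapses to products of $c$'s by \eqref{eqn:bv 3}) and that you work with $L_m$ rather than $\fL_m(1)$, which differ by an element of $U(\heis)$; the ``bookkeeping'' you flag is exactly the one-line observation the paper makes, so there is no real gap.
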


\begin{proof} Because of formula \eqref{eqn:transcendental}, we have:
$$
L_n = \fL_n(1) - \frac 12 \sum_{a+b = n} \left[ :\fq_a(c)\fq_b(1): + :\fq_a(1)\fq_b(c): + \sum_i :\fq_a(l_{(i)})\fq_b(l^{(i)}): \right]
$$
and it therefore suffices to show that the operators $\fG_k(\gamma)$ lie in the algebra generated by $\fq_n(\gamma)$ and $\fL_{n'}(1)$ (as $n$ goes over $\BZ\backslash 0$, $n'$ goes over $\BZ$ and $\gamma$ goes over $R(S)$). By comparing \eqref{eqn:lqw 3} with \eqref{eqn:formula j}, the operator $\fG_k$ is a sum of two terms, the first being:
\begin{equation}
\label{eqn:first}
- \frac 1{n!} \sum_{n_1+...+n_k = 0} :\fq_{n_1}...\fq_{n_k}:(\Delta_{12...k})
\end{equation}
and the second being: 
\begin{equation}
\label{eqn:second}
\sum_{n_1+...+n_{k-2} = 0} \text{coefficient} :\fq_{n_1}...\fq_{n_{k-2}}:(\Delta_{12...k-2}\cdot c_{k-2})
\end{equation}
for some coefficients in $\BQ$. We must show that both operators \eqref{eqn:first} and \eqref{eqn:second} lie in $U(\heis \times \vir)$. For the latter operator, this is clear, since \eqref{eqn:bv 3} allows us to write $\Delta_{12...k-2}\cdot c_{k-2} = c_1 \cdot c_2 \cdot ... \cdot c_{k-2}$, and so each summand in \eqref{eqn:second} is a product of operators $\fq_n(c) \in \heis$. As for \eqref{eqn:first}, the decomposition \eqref{eqn:bv 6} allows us to write:
$$
\text{equation \eqref{eqn:first}} = - \frac 1{n!} \sum_{n_1+...+n_k = 0} :\fq_{n_1}...\fq_{n_k}: \left( \sum_{1 \leq i < j \leq n} \Delta_{ij} \prod_{k \neq i,j} c_k - (n-2) \sum_{i=1}^n \prod_{k \neq i} c_k \right)
$$
$$
= - \frac 1{n!} \sum_{1 \leq i < j \leq n} \sum_{n_1+...+n_k = 0} :\fq_{n_1}(c_1)... \fq_{n_i}\fq_{n_j}(\Delta_{ij})...\fq_{n_k}(c_k): - ( \text{operator in }U(\heis) )
$$
For each fixed $i<j$ and each fixed $n_1,...,\widehat{n_i},...,\widehat{n_j},...,n_k$, the corresponding summand in the last line above is a product of $\fL_{-n_1-...-\widehat{n_i} - ... - \widehat{n_j} -...-n_k}(1)$ with the various $\fq_{n_a}(c)$, and so it lies in the algebra $U(\heis \times \vir)$. The reason for this fact is that, as we commute $q_{n_i}q_{n_j}(\Delta_{ij})$ with various other $q_{n_a}(c)$ in order to achieve the normally ordered product, the commutator lies in $U(\heis)$ by \eqref{eqn:heis vir}. 

\end{proof}

\subsection{} Let $V_{\sm} \subset A^*(\Hilb)$ denote the $\heis \times \vir$ module generated by: 
\begin{equation}
\label{eqn:vacuum}
v = 1 \in A^*(\Hilb_0) \cong \BQ
\end{equation}
Recall that $A^*(\Hilb) = \oplus_{n=0}^\infty A^*(\Hilb_n)$ is graded by $n$, and $L_0$ acts on the $n$-th graded subspace as multiplication by $n$. Aside from the word ``degree", we may refer to the eigenvalue of $L_0$ on a homogeneous element $w \in A^*(\Hilb)$ as the ``weight" of $w$. Because of this, the module $A^*(\Hilb)$ has lowest weight $0$. \\

\begin{corollary}
\label{cor:in the algebra} 

We have $A_{\emph{small}}^*(\eHilb) \subset V_{\emph{small}}$. \\

\end{corollary}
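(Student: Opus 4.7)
The plan is to combine Proposition \ref{prop:in the algebra} (multiplication by any small tautological class is implemented by an element of $U(\heis \times \vir)$) with the observation that each unit class $1_{\Hilb_n}$ already lies in $V_\sm$. To see the latter, starting from the vacuum $v = 1 \in A^*(\Hilb_0)$, I would apply the Nakajima operator $\fq_1(1)$ iteratively. The identity $\fq_1(1) \cdot 1_{\Hilb_d} = (d+1) \cdot 1_{\Hilb_{d+1}}$ follows directly from \eqref{eqn:nak exp 1} and \eqref{eqn:nakajima class}, using that $p_+ : \Hilb_{d,d+1} \to \Hilb_{d+1}$ is generically of degree $d+1$. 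Iterating yields $\fq_1(1)^n \cdot v = n! \cdot 1_{\Hilb_n}$, so $1_{\Hilb_n} \in V_\sm$ for all $n \geq 0$.

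Next, any generator of $A^*_\sm(\Hilb_n)$ is a product $\alpha_1 \cdots \alpha_r$ with factors of the form $\alpha_i = \pi_{1*}[\ch_{k_i}(\CO_\CZ) \cdot \pi_2^*(\gamma_i)]$ for $k_i \in \BN$ and $\gamma_i \in R(S)$; the cases $k_i \leq 1$ vanish by \eqref{eqn:ch taut 1}--\eqref{eqn:ch taut 2}, so we may assume $k_i \geq 2$. Such a product is exactly
\[
\fG_{k_1}(\gamma_1) \circ \cdots \circ \fG_{k_r}(\gamma_r) \cdot 1_{\Hilb_n},
\]
since each $\fG_{k_i}(\gamma_i)$ acts on $A^*(\Hilb_n)$ as multiplication by the corresponding small tautological class. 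By Proposition \ref{prop:in the algebra} each factor $\fG_{k_i}(\gamma_i)$ belongs to $U(\heis \times \vir)$, and $V_\sm$ is closed under this enveloping algebra by construction. Taking $\BQ$-linear combinations of such products and summing over $n$ yields the desired inclusion $A^*_\sm(\Hilb) \subset V_\sm$.

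Beyond Proposition \ref{prop:in the algebra} there is no real obstacle: the representation-theoretic heart of the argument --- expressing $\fG_k(\gamma)$ as a polynomial in Heisenberg and Virasoro generators via the Chow-theoretic Lehn formula of Theorem \ref{thm:lehn} and the $W_{1+\infty}$ relations of Theorem \ref{thm:lqw} --- is already absorbed in that proposition, so what remains for the corollary is the straightforward bookkeeping above.
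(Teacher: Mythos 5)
Your proposal is correct and follows essentially the same route as the paper: the paper's proof likewise invokes the well-known identity $1_{\eHilb_n} = \frac{1}{n!}\fq_1(1)^n(v)$ to place the fundamental classes in $V_{\text{small}}$ and then applies Proposition \ref{prop:in the algebra} to conclude. Your additional justification of the identity via the generic degree of $p_+$ and the explicit bookkeeping with the operators $\fG_{k_i}(\gamma_i)$ are fine but not a different argument.
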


\begin{proof} The well-known formula $1_{\Hilb_n} = \frac 1{n!} \fq_1(1)^n(v)$ shows that the fundamental class of every Hilbert scheme $\Hilb_n$ lies in $V_\sm$. Therefore, Proposition \ref{prop:in the algebra} implies that the entire ring $A^*_\sm(\Hilb)$ lies inside $V_\sm$. 
	
	
\end{proof} 

\noindent The $\heis \times \vir$ module $V_\sm$ is of lowest weight, in the sense that is generated by:
\begin{equation}
\label{eqn:positive}
\{\fq_n(\gamma), L_{n'}\}_{n,n'>0}^{\gamma \in R(S)} \subset \heis \times \vir
\end{equation}
acting on the vector $v = 1 \in A^*(\Hilb_0) \cong \BQ$. We have:
\begin{equation}
\label{eqn:only relation}
\fq_n(\gamma) v = L_{n'}v = 0 \qquad \forall n < 0 , \ n' \leq 1
\end{equation}
for degree reasons (see the definition of Virasoro operators in \eqref{eqn:vir def}). \\

\begin{proposition} 
\label{prop:rep theory}
	
$V_{\emph{small}}$ is an irreducible $\emph{Heis} \times \emph{Vir}$ module. \\

\end{proposition}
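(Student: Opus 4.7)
The plan is to realise $V_{\sm}$ as an external tensor product of an irreducible Heisenberg module and an irreducible Virasoro module, from which irreducibility follows formally. Since the two Lie subalgebras commute by \eqref{eqn:commute}, we have $U(\heis \times \vir) \cong U(\heis) \otimes U(\vir)$. Denote by $V_{\heis} := U(\heis) \cdot v$ and $V_{\vir} := U(\vir) \cdot v$ the submodules of $V_{\sm}$ generated by the vacuum under each factor alone. Because $\heis$ and $\vir$ commute, and because any element of $U(\heis)$ (respectively $U(\vir)$) that annihilates $v$ in its own subalgebra also annihilates $v$ when viewed inside $V_{\sm}$, the assignment $Xv \otimes Yv \mapsto XYv$ extends to a well-defined $\heis \times \vir$-equivariant surjection $V_{\heis} \otimes V_{\vir} \twoheadrightarrow V_{\sm}$.

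I would then establish irreducibility of the two factors separately. For $V_{\heis}$, the relations $\fq_n(\gamma)v = 0$ for all $n < 0$ and $\gamma \in R(S)$ (which hold for degree reasons, since $v$ sits in degree $0$) realise $V_{\heis}$ as a nonzero quotient of the standard Fock space of $\heis$. That Fock space is irreducible precisely because the intersection pairing is non-degenerate on $R(S)$---the unit pairs to $1$ with $c$, and the Picard pairing is non-degenerate on divisor classes---so $V_{\heis}$ equals it. For $V_{\vir}$, the conditions \eqref{eqn:only relation} identify $v$ as a vacuum vector, presenting $V_{\vir}$ as a nonzero quotient of the vacuum module of $\vir$ at central charge $b$. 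By the Feigin-Fuks classification \cite{FF} of lowest-weight Virasoro modules, this vacuum module is irreducible whenever the central charge exceeds $1$; since $b \in \{2,\ldots,21\}$ by \eqref{eqn:b}, we are always in that regime, and hence $V_{\vir}$ is irreducible.

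Given irreducibility of both factors, the external tensor product $V_{\heis} \otimes V_{\vir}$ is irreducible as a $\heis \times \vir$-module by a standard density argument, using that each factor is absolutely irreducible (the Fock module via its explicit monomial basis, the Virasoro vacuum module via non-vanishing of the Kac determinant at $c > 1$, $h = 0$). The surjection $V_{\heis} \otimes V_{\vir} \twoheadrightarrow V_{\sm}$ from the first paragraph therefore has trivial kernel, producing an isomorphism $V_{\sm} \cong V_{\heis} \otimes V_{\vir}$ and thereby completing the proof.

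The main obstacle is the input from \cite{FF} on irreducibility of the Virasoro vacuum module at central charge $b > 1$; the remainder of the argument is essentially formal once this input is secured. As emphasised in Subsection \ref{sub:intro big}, it is precisely this representation-theoretic step that fails for the analogous strategy directed at Conjecture \ref{conj:main}, where $\vir$ must be replaced by $\fsp_{2\infty}$, whose lowest-weight representation theory is significantly more intricate.
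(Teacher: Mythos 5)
Your proof is correct, and it rests on exactly the same two inputs as the paper's argument: the non-degeneracy of the intersection pairing on $R(S)$ (which handles the Heisenberg half) and the Feigin--Fuchs classification \cite{FF}, which shows that for central charge $b\in\{2,\dots,21\}$ and lowest weight $0$ the maximal proper submodule of the Verma module is generated by $L_1v$, so that imposing $L_1v=0$ as in \eqref{eqn:only relation} yields the irreducible vacuum module. The packaging differs: the paper takes a maximal proper submodule $M\subset V_{\sm}$, uses the non-degenerate pairing to strip off the Heisenberg generators and land in a Virasoro submodule $N\subset \vir\cdot v$, and then invokes \cite{FF} to force $N=0$; you instead exhibit the equivariant surjection $V_{\heis}\otimes V_{\vir}\twoheadrightarrow V_{\sm}$ and quote the fact that a tensor product of absolutely irreducible modules over commuting algebras is irreducible. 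These are essentially the same argument --- the paper's maximal-submodule computation is an inline proof of that tensor-product lemma --- but your version makes the factorization $V_{\sm}\cong \text{Fock}\otimes(\text{vacuum module})$ explicit, which is the same structural statement the authors record later for $V_{\big}=\text{Fock}\otimes W$ in Section \ref{sec:big}. The one step worth stating more cleanly is the well-definedness of $Xv\otimes Yv\mapsto XYv$: the justification is simply that elements of $U(\heis)$ and $U(\vir)$ commute as operators on $A^*(\Hilb)$ by \eqref{eqn:commute}, so $Xv=0$ implies $XYv=YXv=0$ and symmetrically for $Y$; your phrasing in terms of ``annihilating $v$ in its own subalgebra'' obscures this.
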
 

\begin{proof} Let $M \subset V_\sm$ denote a maximal proper submodule with respect to the $\text{Heis} \times \text{Vir}$ action. Fix a $\BQ$-basis $\Gamma$ of $R(S)$, and let us consider any element:
\begin{equation}
\label{eqn:sum}
\sum_{ \{(n_1,\gamma_1),...,(n_k,\gamma_k) \} \subset \BN \times \Gamma} \fq_{n_1}(\gamma_1)...  \fq_{n_k}(\gamma_k) v^{\gamma_1,...,\gamma_k}_{n_1,...,n_k} \in M
\end{equation}
where $v^{\gamma_1,...,\gamma_k}_{n_1,...,n_k} \in \vir \cdot v$. Since the integral pairing on $R(S)$ is non-degenerate, by applying the operators $\fq_{-n}(\gamma)$ for various $n>0, \gamma \in R(S)$ to the sum in \eqref{eqn:sum}, we infer that $v^{\gamma_1,...,\gamma_n}_{n_1,...,n_k} \in M$ for any unordered set $\{(n_1,\gamma_1),...,(n_k,\gamma_k) \} \subset \BN \times \Gamma$. Therefore, if we let $N \subset \vir \cdot v$ denote the maximal Virasoro algebra submodule spanned by the $v^{\gamma_1,...,\gamma_k}_{n_1,...,n_k}$ that appear in \eqref{eqn:sum} for various elements of $M$, we have:
$$
M \subset U(\heis) \cdot N
$$
However, the classification of lowest weight modules of $\vir$ from \cite{FF} shows that the unique such maximal proper submodule $N$ is necessarily generated by $L_1 v$ (since the central charge of our $\vir$, namely the rank $b$ of the transcendental lattice, is contained between 2 and 21, and the weight of $v$ is 0). Since $L_1 v = 0$ due to \eqref{eqn:only relation}, we conclude that $N=0$, hence $M = 0$ and thus $V_\sm$ is irreducible.

\end{proof}

\begin{proof} \emph{of Theorem \ref{thm:main}:} There exists a $\heis \times \vir$ action on $H^*(\Hilb)$ with respect to which the cycle class map $\zeta : A^*(\Hilb) \rightarrow H^*(\Hilb)$ is equivariant (the construction and proof of all statements in cohomology are analogous to those in Chow). Therefore, Proposition \ref{prop:rep theory} and Schur's Lemma imply that:
\begin{equation}
\label{eqn:zeta sm}
\zeta|_{V_\sm} : V_\sm \rightarrow H^*(\Hilb)
\end{equation}
is either 0 or injective. Since $\zeta$ is an isomorphism between the one-dimensional vector spaces $A^*(\Hilb_0)$ and $H^*(\Hilb_0)$, we conclude that \eqref{eqn:zeta sm} is injective. Together with Corollary \ref{cor:in the algebra}, this concludes the proof of Theorem \ref{thm:main}.
	
\end{proof}

\section{The representation theory of tautological classes}
\label{sec:big}

\subsection{} Let $V_\big \subset A^*(\Hilb)$ denote the $\heis$ submodule generated by:
\begin{equation}
\label{eqn:products}
\prod_{i=1}^t \fq_{m_i} \fq_{n_i}(\Delta) \cdot v
\end{equation}
over all $(m_1,n_1),...,(m_t,n_t) \in \BN^2$, where $v = 1 \in A^*(\Hilb_0) \cong \BQ$. \\

\begin{proposition} 
\label{prop:prop 1}

$V_{\emph{big}}$ is preserved by the operators $\fq_m \fq_n(\Delta)$, for all $m,n \in \BZ\backslash 0$. \\

\end{proposition}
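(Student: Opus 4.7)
My plan is a double induction on the length of products, powered by two commutator computations that I would carry out at the outset.

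The first is a single bracket: for any $\gamma \in A^*(S)$ and $m, n, r \in \BZ\backslash 0$, the Leibniz rule together with the Heisenberg relation \eqref{eqn:heisenberg alt} and the projection-formula identity $\int_{s_i}\Delta_{ij}\Delta_{ik} = \Delta_{jk}$ on $S^n$ should give
$$
[\fq_m\fq_n(\Delta), \fq_r(\gamma)] \;=\; m\delta_{m+r}^0\,\fq_n(\gamma) \;+\; n\delta_{n+r}^0\,\fq_m(\gamma).
$$
For $\gamma \in R(S)$ the right-hand side lies in $\heis$, and iterating via Leibniz yields $[\fq_m\fq_n(\Delta), U(\heis)]\subset U(\heis)$.

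The second computation, which is the technical heart of the proof, is the bracket of two such operators. Expanding $[\fq_m^{(1)}\fq_n^{(2)}, \fq_r^{(3)}\fq_s^{(4)}]$ by Leibniz produces four Kronecker-delta terms, each a Nakajima bilinear times a pairwise diagonal class on $S^4$. Pairing with $\pi_{S_1 S_2}^*(\Delta_{12})\cdot\pi_{S_3 S_4}^*(\Delta_{34})$ and pushing down to $\Hilb$, I would reduce every term by iterated application of $\int_{s_i}\Delta_{ij}\Delta_{ik}=\Delta_{jk}$ to a single pairwise-diagonal integral, arriving at
$$
[\fq_m\fq_n(\Delta), \fq_r\fq_s(\Delta)] = m\delta_{m+r}^0\fq_s\fq_n(\Delta) + m\delta_{m+s}^0\fq_r\fq_n(\Delta) + n\delta_{n+r}^0\fq_m\fq_s(\Delta) + n\delta_{n+s}^0\fq_m\fq_r(\Delta),
$$
a finite $\BQ$-linear combination of operators of the same form $\fq_a\fq_b(\Delta)$.

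With these two identities in hand, the proof runs as follows. Every element of $V_\big$ is a sum of expressions $h\cdot P$ with $h \in U(\heis)$ and $P = \prod_{i=1}^t \fq_{m_i}\fq_{n_i}(\Delta)\cdot v$, $(m_i, n_i)\in \BN^2$. The first identity reduces matters to showing $\fq_m\fq_n(\Delta)P \in V_\big$ for such $P$. For $m, n > 0$ this is immediate: prepending $\fq_m\fq_n(\Delta)$ to $P$ yields another generator of $V_\big$. For general $m, n \in \BZ\backslash 0$ I would induct on $t$; the base case $t=0$ is disposed of by degree reasons and \eqref{eqn:heisenberg alt} applied to the vacuum $v \in A^*(\Hilb_0)$. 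For the step, writing $P = \fq_{m_1}\fq_{n_1}(\Delta)P'$ and expanding
$$\fq_m\fq_n(\Delta)P = \fq_{m_1}\fq_{n_1}(\Delta)\cdot \fq_m\fq_n(\Delta)P' + [\fq_m\fq_n(\Delta), \fq_{m_1}\fq_{n_1}(\Delta)]\cdot P',$$
the first summand lies in $V_\big$ by the inductive hypothesis followed by the $m_1, n_1 > 0$ case, and the second, by the second commutator identity, is a finite sum $\sum c_j\fq_{a_j}\fq_{b_j}(\Delta)\cdot P'$ which is in $V_\big$ by the inductive hypothesis applied to $P'$.

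The main obstacle will be the bookkeeping in the second commutator computation: one must verify that pairing the four Leibniz terms with $\Delta_{12}\cdot\Delta_{34}$ and integrating over $S^4$ never produces a self-intersection $\Delta_{ij}^2 = e\cdot \Delta_{ij}$, so that no spurious $c_2(\eTan_S)$ corrections appear and the family $\{\fq_a\fq_b(\Delta)\}_{a,b\in \BZ\backslash 0}$ really is closed under the bracket. The pairs of indices arising in each Leibniz term form a tree on $\{1,2,3,4\}$, so the relevant $\Delta_{ij}$'s meet transversally; this transversality is precisely what keeps the closure of the family intact and what makes the length-reducing step of the induction land back where it started.
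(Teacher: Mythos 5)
Your proposal is correct and follows essentially the same route as the paper: the two commutator identities you compute are precisely \eqref{eqn:oi} and \eqref{eqn:oioi} (up to harmless reordering of the Nakajima factors), and the paper likewise deduces the proposition directly from them, merely leaving implicit the induction on the length of the product that you spell out. Your observation that the diagonals arising in the double bracket form a tree on $\{1,2,3,4\}$, so that no $\Delta_{ij}^2 = \Delta_{ij}\cdot e$ corrections appear, is exactly the reason the family $\fq_a\fq_b(\Delta)$ closes under the bracket as claimed.
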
 

\noindent In particular, the Proposition above implies that $V_\big$ is also preserved by $\vir$, due to formula \eqref{eqn:vir def}. Therefore, we have $V_\big \supset V_\sm$. 

\begin{proof} It suffices to show that:
$$
\fq_m \fq_n (\Delta) \cdot \prod_i \fq_{k_i}(\gamma_i)  \prod_j \fq_{m_j} \fq_{n_j}(\Delta) \cdot v
$$
lies in $V_\big$ for any choice of indices. This follows from the commutation relations:
\begin{equation}
\label{eqn:oi}
[\fq_m \fq_n(\Delta), \fq_k(\gamma)] = m \delta_{m+k}^0 \fq_n(\gamma) + n \delta_{n+k}^0 \fq_m(\gamma) 
\end{equation}
and:
\begin{multline} \label{eqn:oioi}
[\fq_m \fq_n(\Delta), \fq_{m'} \fq_{n'}(\Delta)] = m \delta_{m+m'}^0 \fq_n \fq_{n'}(\Delta) + \\
+ m \delta_{m+n'}^0 \fq_{m'} \fq_{n}(\Delta) + n \delta_{n+m'}^0 \fq_m \fq_{n'}(\Delta) + n \delta_{n+n'}^0 \fq_{m'} \fq_{m}(\Delta) 
\end{multline}
which are simple consequences of \eqref{eqn:heisenberg}. 

\end{proof} 

\begin{proposition}
\label{prop:prop 2}

Any vector subspace $V \subset A^*(\eHilb)$ which contains $v$ and is preserved by both $\emph{Heis} \times \emph{Vir}$ and multiplication with $\ech_2(\emph{Tan})$ must contain $V_{\emph{big}}$. \\

\end{proposition}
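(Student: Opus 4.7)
The plan is to prove $V \supset V_{\big}$ by induction on the number $t$ of factors in the generating vectors $\prod_{i=1}^{t} \fq_{m_i}\fq_{n_i}(\Delta) \cdot v$ of $V_{\big}$. The base case $t = 0$ gives $v \in V$ by hypothesis, and the inductive step reduces to showing that whenever $u \in V$, the vector $\fq_m\fq_n(\Delta) u$ lies in $V$ for every $m, n \in \BN$. Using the K3 decomposition $\Delta = \Delta^\tr + c_1 + c_2 + \sum_i l_{(i)1}l^{(i)}_2$ from \eqref{eqn:transcendental}, the difference $\fq_m\fq_n(\Delta) - \fq_m\fq_n(\Delta^\tr)$ is a sum of products $\fq_m(\gamma)\fq_n(\gamma')$ with $\gamma, \gamma' \in R(S)$, all of which lie in $U(\heis)$ and hence preserve $V$. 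So it suffices to show that each operator $\fq_m\fq_n(\Delta^\tr)$ sends $V$ into itself.

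The essential new ingredient is multiplication by $\ch_2(\Tan)$. Expanding the GHRR formula of Proposition \ref{prop:chern tangent} in degree four yields
\begin{equation*}
\ch_2(\Tan\ \Hilb) = 2\pi_{1*}(\ch_4(\CO_\CZ)) + 4\pi_{1*}(\ch_2(\CO_\CZ)\pi_2^*(c)) - \pi_{1*}(\ch_2(\CO_\CZ)^2).
\end{equation*}
The first two summands are small tautological classes (since $c \in R(S)$), so multiplication by them already preserves $V$, because $V \supset V_{\sm}$ by Corollary \ref{cor:in the algebra}. Combined with the hypothesis on $\ch_2(\Tan)$, this implies that the operator $\CM$ of multiplication by the genuinely big class $\pi_{1*}(\ch_2(\CO_\CZ)^2)$ preserves $V$.

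Next I would unpack $\CM$ via the Nakajima formula $\fG_2 = -\sum_{k \geq 1}\fq_k\fq_{-k}|_\Delta$ of Theorem \ref{thm:lehn}. Interpreting $\CM(u) = \pi_{1*}(\pi_1^*(u) \cdot [\CZ]^2)$ by applying $\fG_2$ in two independent copies of $S$, then restricting to the small diagonal of $S^2$ and pushing forward, one obtains the operator identity
\begin{equation*}
\CM = \sum_{k, k' \geq 1} \fq_k \fq_{-k} \fq_{k'} \fq_{-k'}(\Delta_{1234})
\end{equation*}
with $\Delta_{1234} \in A^*(S^4)$ the smallest diagonal. Using the K3 identity \eqref{eqn:bv 6} to expand $\Delta_{1234}$ as a sum of pairwise-diagonal terms $\Delta_{ij}\prod_{k \notin \{i,j\}}c_k$ and pure $c$-products $\prod_{k \neq i}c_k$, the $c$-product contributions are Heisenberg compositions $\fq_\bullet(c)\fq_\bullet(c)\fq_\bullet(c)\fq_\bullet(c) \in U(\heis)$, while each $\Delta_{ij}$ piece reduces, modulo $U(\heis)$, to a composition of the form $\fq_a\fq_b(\Delta) \cdot \fq_c(c)\fq_d(c)$, with $\{a,b,c,d\}$ a permutation of $\{k, -k, k', -k'\}$. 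Thus, modulo operators that already preserve $V$, $\CM$ acts on $V$ as a specific generating function for the desired $\fq_m\fq_n(\Delta)$ operators.

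The hard part will be the final index-isolation step: the expression for $\CM$ mixes all pairs $(k, k')$ and all pairings simultaneously, so extracting an individual $\fq_m\fq_n(\Delta) u$ requires producing enough independent linear combinations. The strategy is to apply $\CM$ to sufficiently many vectors $u \in V$, commute with various $\fq_p(\gamma)$'s using \eqref{eqn:heisenberg alt} and the $W_{1+\infty}$ relations of Theorem \ref{thm:lqw} to shift indices, and then solve the resulting linear system. The Heisenberg-commutator corrections that arise when reordering $\fq_\bullet(c)$'s past the $\fq_a\fq_b(\Delta)$-factors all remain in $U(\heis)$ and so do not obstruct the induction; once the isolation is carried out, each $\fq_m\fq_n(\Delta) u \in V$, and iteration closes the induction, yielding $V \supset V_{\big}$.
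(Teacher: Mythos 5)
Your first half reproduces the paper's reduction: expanding Proposition \ref{prop:chern tangent} in degree $2$ and discarding the small tautological summands, the hypothesis on $\ch_2(\Tan)$ becomes the statement that multiplication by $\pi_{1*}(\ch_2(\CO_{\CZ})^2)$, i.e. the operator $\fL_0\fL_0|_\Delta(1) = \fL_0 \fL_0 (\Delta)$, preserves $V$ modulo $U(\heis\times\vir)$. One correction here: the reason multiplication by the small summands preserves $V$ is \emph{not} that $V \supset V_{\text{small}}$ (a subspace containing $V_{\text{small}}$ need not be closed under multiplication by its elements); the correct justification is Proposition \ref{prop:in the algebra}, which puts these multiplication operators inside $U(\heis\times\vir)$, combined with the hypothesis that $V$ is $\heis\times\vir$--stable.

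The genuine gap is the extraction step, which you explicitly defer as ``the hard part'': isolating the individual operators $\fq_m\fq_n(\Delta)$ from $\fL_0\fL_0(\Delta)$ is precisely the content of the proposition, and your proposed route --- expand $\Delta_{1234}$ via \eqref{eqn:bv 6}, reorder, and solve an unspecified infinite linear system over all pairs $(k,k')$ and all pairings --- is neither carried out nor clearly convergent as stated. The paper avoids expanding the small diagonal of $S^4$ altogether and finishes in two commutators using \eqref{eqn:heis vir}: first $[-\fL_0\fL_0(\Delta),\fq_m(1)] = 2m\,\fq_m\fL_0(\Delta) - m^2\fq_m(24c)$, which shows $V$ is stable under $\fq_m\fL_0(\Delta)$, and then $[\fq_m\fL_0(\Delta),\fq_n(1)] = -n\,\fq_m\fq_n(\Delta) + m\delta_{m+n}^0\fL_0(1)$, which isolates $\fq_m\fq_n(\Delta)$ on the nose. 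If you replace your linear-system step with this double commutator against $\fq_m(1)$ and $\fq_n(1)$, your argument closes and coincides with the paper's.
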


\begin{proof} Proposition \ref{prop:in the algebra} states the operators $\{\fG_k(\gamma)\}^{k \in \BN}_{\gamma \in R(S)}$ lie in $U(\heis \times \vir)$. Therefore, Proposition \ref{prop:chern tangent} (together with $\ch_0(\CO_{\CZ}) = \ch_1(\CO_{\CZ}) = 0$) implies that:
\begin{multline*}
\text{multiplication by }\ch_2(\Tan) = \\ = - \text{multiplication by } \pi_{1*}\Big[\ch_2(\CO_{\CZ})\ch_2(\CO_{\CZ}) (1+2c)\Big]  \quad \text{mod } U(\heis \times \vir)
\end{multline*}
where $\Hilb \times S \xrightarrow{\pi_1,\pi_2} \Hilb, S$ are the standard projections. Meanwhile, the operator of multiplication on the second line is $\fL_0\fL_0|_\Delta(1+2\pi_2^*(c))$, by \eqref{eqn:degree}. Moreover:
$$
\fL_0\fL_0|_\Delta(2c) = 2 \fL_0\fL_0(\Delta \cdot c) = 2\fL_0\fL_0(c \cdot c) = 2\fL_0(c)^2 \in U(\heis \times \vir)
$$
(the second equality follows from \eqref{eqn:bv 3}), implies that we have:
$$
\text{multiplication by } \ch_2(\Tan) = - \fL_0\fL_0(\Delta) \text{ mod } U(\heis \times \vir)
$$
Hence if $V$ is to be preserved by $\heis \times \vir$ and multiplication by $\ch_2(\Tan)$, then it must also be preserved by $-\fL_0 \fL_0(\Delta)$. However, \eqref{eqn:heis vir} implies the relations:
$$
[-\fL_0 \fL_0(\Delta), \fq_m(1)] = m \fL_0\fq_m(\Delta) + m \fq_m \fL_0(\Delta) = 2m \fq_m\fL_0(\Delta) - m^2 \fq_m(24c) 
$$
which implies that $V$ must also be preserved by $\fq_m \fL_0(\Delta)$. Similarly, the relation:
$$
[\fq_m \fL_0(\Delta),\fq_n(1)] = - n \fq_m\fq_n(\Delta) + m \delta_{m+n}^0 \fL_0(1)
$$
implies that $V$ must also be preserved by $\fq_m\fq_n(\Delta)$. Since $V \ni v$, this implies that $V$ must also contain all the vectors \eqref{eqn:products}, and therefore $V \supset V_\big$. 	
	
\end{proof}

\noindent The Proposition above shows that any generalization of the proof of Theorem \ref{thm:main} that accounts for the operators of multiplication by Chern classes of the tangent bundle must necessarily contend with the vector space $V_\big$. The following Proposition shows that this vector space in fact contains all big tautological classes. \\

\begin{proposition}
\label{prop:prop 3}

We have $A_{\emph{big}}^*(\eHilb) = V_{\emph{big}}$. \\

\end{proposition}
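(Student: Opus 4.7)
Let $\CA \subseteq \End_\BQ(A^*(\eHilb))$ be the associative algebra generated by the Nakajima operators $\fq_n(\gamma)$ for $\gamma \in R(S)$ and $n \in \BZ\setminus\{0\}$ together with the pair operators $\fq_m\fq_n(\Delta)$ for $m,n \in \BZ\setminus\{0\}$. By Proposition \ref{prop:prop 1} and the definition of $V_\big$, we have $V_\big = \CA \cdot v$ where $v = 1 \in A^*(\eHilb_0)$, so the plan is to prove the two inclusions separately.

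For the forward inclusion $A^*_\big(\eHilb) \subseteq V_\big$, I will prove the stronger claim that the multiplication operator $M_c$ lies in $\CA$ for every generator $c = \pi_{1*}[\ech_{k_1}(\CO_\CZ)\cdots \ech_{k_t}(\CO_\CZ)\pi_2^*(\gamma)]$ of $A^*_\big$. Once this is known, $c = M_c\cdot 1_{\eHilb_d} \in \CA\cdot V_\big \subseteq V_\big$ because $1_{\eHilb_d} = \fq_1(1)^d v/d! \in V_\sm \subseteq V_\big$, and products of generators are handled inductively via $c_1\cdots c_s = M_{c_1}(c_2\cdots c_s)$. To prove $M_c \in \CA$, I introduce the auxiliary multi-factor operator $\widetilde{\fG}_{k_1,\ldots,k_t} : A^*(\eHilb) \to A^*(\eHilb\times S^t)$ given by pullback from $\eHilb$ followed by multiplication with $\prod_i p_i^*\ech_{k_i}(\CO_\CZ)$, where $p_i : \eHilb\times S^t \to \eHilb\times S$ is the projection onto the $i$-th $S$-copy; one then has $M_c(w) = P_{1*}[\widetilde{\fG}_{k_1,\ldots,k_t}(w)\cdot P_2^*\Delta^{(t)}_*(\gamma)]$ for $\Delta^{(t)}:S \hookrightarrow S^t$ the small diagonal. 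This operator factors as a composition of commuting pieces $\widetilde{\fG}_{k_i}^{(i)}$ each acting in its own $S$-factor, and Theorem \ref{thm:lqw} via formula \eqref{eqn:formula j} expands each $\widetilde{\fG}_{k_i}^{(i)}$ as a sum of products of Nakajima operators in that $S$-factor, modulo $\pi_2^*(e)$-corrections that reduce for a K3 surface to multiples of $c\in R(S)$ and hence to elements of $\heis$. After composing and restricting via $\Delta^{(t)}_*(\gamma)$, the Beauville-Voisin identity \eqref{eqn:bv 6} rewrites the small diagonal class of $S^t$ as a polynomial in pairwise diagonals $\Delta_{ij}$ and point classes $c_l$, so substituting exhibits $M_c$ as a polynomial in the generators of $\CA$.

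For the reverse inclusion $V_\big \subseteq A^*_\big(\eHilb)$, it suffices to show that the generators $\fq_n(\gamma)$ and $\fq_m\fq_n(\Delta)$ of $\CA$ preserve the subring $A^*_\big(\eHilb)\subseteq A^*(\eHilb)$, for then $V_\big = \CA\cdot v \subseteq A^*_\big(\eHilb)$ since $v \in A^*(\eHilb_0) \subseteq A^*_\big(\eHilb)$. Using the identity $\fq_n = -\fJ_n^0$ of \eqref{eqn:lqw 1} and the commutation relations of Theorem \ref{thm:lqw}, the action of $\fq_n(\gamma)$ on a product $M_{c_1}\cdots M_{c_s}\cdot u$ of big tautological multiplications will be recursively rewritten by commuting $\fq_n(\gamma)$ past each $M_{c_j}$: the commutators $[\fJ_n^0, \fJ_0^{k-1}]$ controlled by the LQW relations produce further LQW operators $\fJ_n^{k-2}$, which by \eqref{eqn:formula j} are Nakajima products restricted to diagonals and, after the Beauville-Voisin reduction as in the forward direction, correspond to elements of $\CA$ acting through big tautological multiplications. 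For $\fq_m\fq_n(\Delta)$, the K\"unneth decomposition \eqref{eqn:transcendental} handles the $R(S)$-decomposable pieces via iterated $\fq_n(\gamma)$-actions already treated, while the transcendental component is absorbed through the identification $\fL_n(1) = -\fJ_n^1$ given by \eqref{eqn:formula j}. The main obstacle is this second direction: one must verify that the recursive commutator rewriting closes within the ring of big tautological classes without escaping $A^*_\big$, and the careful bookkeeping matching the LQW filtration on operators with the filtration on $A^*_\big$ by the number of $\ech$-factors is what I expect to account for the bulk of the technical work.
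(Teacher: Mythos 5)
Your forward inclusion $A^*_\big(\Hilb) \subseteq V_\big$ matches the paper's argument essentially step for step: the multiplication operator by a generator of $A^*_\big(\Hilb)$ is $\fG_{k_1}\cdots\fG_{k_t}(\Delta_{12...t}\cdot\gamma)$, formula \eqref{eqn:formula j} expands this into a linear combination of Nakajima products $\fq_{n_1}\cdots\fq_{n_s}(\Delta_{12...s}\cdot\gamma')$, and \eqref{eqn:bv 6} splits the small diagonal into pairwise diagonals and point classes, exhibiting each such operator as a polynomial in the generators $\fq_n(\gamma)$ and $\fq_m\fq_n(\Delta)$, which preserve $V_\big$ by definition and by Proposition \ref{prop:prop 1}. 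That half is fine.

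The reverse inclusion is where your argument has a genuine gap, and it is not merely the ``careful bookkeeping'' you defer. You propose to show that $\fq_n(\gamma)$ and $\fq_m\fq_n(\Delta)$ preserve $A^*_\big(\Hilb)$ by commuting them past the multiplication operators $M_{c_j}$ using the LQW relations. But every commutator this produces is again a polynomial in Nakajima operators, and once everything has been pushed to the right you are left with interleavings of tautological multiplications and expressions $\fq_{n_1}\cdots\fq_{n_s}(\Gamma)\cdot v$; to conclude that these lie in $A^*_\big(\Hilb)$ you would need to already know that the Nakajima operators send $A^*_\big(\Hilb)$ into $A^*_\big(\Hilb)$ --- which is exactly the statement being proved. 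The commutation relations only control the structure of $V_\big = \CA\cdot v$ as an abstract module; they carry no information about whether that module sits inside the subring $A^*_\big(\Hilb)$ of the Chow ring. The paper breaks this circularity with a genuinely geometric input, Proposition \ref{prop:preserve}: the Nakajima operators are factored through nested Hilbert schemes via Theorem \ref{thm:nakajima}, auxiliary rings of big tautological classes are defined on $\Hilb_{n,n+1}$ and $\Hilb_{n,n+1,n+2}$, and the explicit projective-bundle presentations (Propositions \ref{prop:1 minus}, \ref{prop:1 plus}, \ref{prop:2 minus}) together with the Segre-class formulas \eqref{eqn:segre 1}--\eqref{eqn:segre 2} show that each pullback and pushforward in the factorization preserves these rings. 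Some intersection-theoretic argument of this kind is unavoidable here; you correctly located the difficulty, but the commutator calculus you propose cannot close it.
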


\begin{proof} Let us first prove the inclusion $\subset$. By definition, the ring $A_\big(\Hilb)$ is generated by the classes \eqref{eqn:big classes}. The operator of multiplication by \eqref{eqn:big classes} is:
$$
\fG_{k_1}...\fG_{k_t}(\Delta_{12...t}\cdot \gamma_t) : \ah \rightarrow \ah
$$
By Theorem \ref{thm:lqw} (specifically formula \eqref{eqn:formula j}), the operator above can be written as a linear combination of operators of the form:
\begin{equation}
\label{eqn:above}
\fq_{n_1}...\fq_{n_s}(\Delta_{12...s} \cdot \gamma'_s)
\end{equation}
for some $\gamma' \in R(S)$. By applying \eqref{eqn:bv 6}, one can write expression \eqref{eqn:above} as a product of operators in $\heis$ with a single operator of the form $\fq_{n_i} \fq_{n_j}(\Delta)$. As both kinds of operators preserve $V_\big$ (the former by definition, the latter by Proposition \ref{prop:prop 1}), we conclude that $A^*_\big(\Hilb) \subset V_\big$. The inclusion $\supset$ follows from Propositions \ref{prop:preserve}. 

\end{proof} 

\begin{proof} \emph{of Proposition \ref{prop:equiv}:} The argument below closely follows the final remark of \cite{Y}. Recall the following result of de Cataldo and Migliorini (\cite{dCM}, Theorem 5.4.1):
\begin{equation}
\label{eqn:decomp}
A^*(\Hilb) = \bigoplus_{n_1 \geq ... \geq n_k > 0} \BQ \cdot \fq_{n_1}... \fq_{n_k}(\Gamma) \cdot v
\end{equation}
as $\Gamma$ runs over a $\BQ$--basis of $A(S^k)^{\text{sym}}$, where $\text{sym}$ denotes the part which is symmetric with respect to those transpositions $(ij) \in \Sigma_k$ for which $n_i = n_j$. \\

\noindent  Let us first show that Conjecture \ref{conj:main} implies Conjecture \ref{conj:v}. To this end, suppose $\Gamma \in R(S^k)$ is such that $\bar{\zeta}(\Gamma) = 0 \in H^*(S^k)$, where $\bar{\zeta} : A^*(S^k) \rightarrow H^*(S^k)$ denotes the cycle class map. Since the cycle class map commutes with the assignment:
$$
\Gamma \leadsto \fq_{n_1}...\fq_{n_k}(\Gamma)
$$
we conclude that $\fq_{n_1}...\fq_{n_k}(\Gamma) \cdot v = 0 \in H^*(\Hilb_{n_1+...+n_k})$ for any $n_1,...,n_k \in \BN$. By the very definition of $R(S^k)$ and relations \eqref{eqn:bv 1}--\eqref{eqn:bv 7}, the class $\Gamma$ can be written as a product of pairwise diagonals $(p_i \times p_j)^*(\Delta)$ and classes $p_i^*(l)$, $p_i^*(c)$, for various $1\leq i < j \leq k$, where $p_i : S^k \rightarrow S$ denotes the $i$-th projection map. Therefore: 
$$
\fq_{n_1}...\fq_{n_k}(\Gamma) \cdot v \in V_\big \stackrel{\text{Prop. \ref{prop:prop 3}}}= A^*_\big(\Hilb)
$$
Conjecture \ref{conj:main} then implies that $\fq_{n_1}...\fq_{n_k}(\Gamma) \cdot v = 0 \in A^*(\Hilb_{n_1+...+n_k})$, and if the numbers $n_1,...,n_k$ are taken to be distinct, then \eqref{eqn:decomp} implies that $\Gamma = 0 \in A^*(S^k)$. \\

\noindent Conversely, let us show that Conjecture \ref{conj:v} implies Conjecture \ref{conj:main}. By Propositions \ref{prop:prop 1} and  \ref{prop:prop 3}, it suffices to show that the cycle class map $\zeta : A^*(\Hilb) \rightarrow H^{*}(\Hilb)$ is injective on the $\BQ$-span of:
\begin{equation}
\label{eqn:tbi}
\prod_{i=1}^k \fq_{m_i} \fq_{n_i} (\Delta) \prod_{j=1}^l \fq_{p_j}(\gamma^{(j)}) \cdot v = \fq_{m_1}\fq_{n_1}...\fq_{m_k} \fq_{n_k} \fq_{p_1}...\fq_{p_l}(\Gamma) \cdot v \in A^*(\Hilb)
\end{equation}
for any natural numbers $m_i,n_i,p_i$ and any classes $\gamma^{(j)} \in R(S)$, where we write:
\begin{equation}
\label{eqn:bat}
\Gamma = \Delta_{12} \Delta_{34}...\Delta_{2k-1,2k} \prod_{j=1}^l \gamma^{(j)}_{2k+j}
\end{equation}
Recall that \eqref{eqn:decomp} states that any linear relation between $\zeta(\text{elements \eqref{eqn:tbi}})$ implies the corresponding linear relation between $\zeta(\text{Sym}(\text{elements \eqref{eqn:bat}}))$ (here \text{Sym} denotes the operator of symmetrization with respect to the subgroup of permutations generated by transpositions corresponding to those pairs of numbers $m_i, n_i, p_j$ which are equal). Since the latter elements actually lie in $\zeta(R(S^{2k+l}))$, Conjecture \ref{conj:v} yields a linear relation between the Sym applied to the elements \eqref{eqn:bat} in the Chow group of $S^{2k+l}$. Plugging this relation back into $\fq_{m_1}\fq_{n_1}...\fq_{m_k} \fq_{n_k} \fq_{p_1}...\fq_{p_l}(...)$ implies a linear relation between the elements \eqref{eqn:tbi}, as required. 

\end{proof}

\subsection{} In the remainder of this Section, we will develop the representation theory of the space $V_\big$. We may consider the operators (in the notation of \eqref{eqn:transcendental}):
$$
\fq_m \fq_n(\Delta^\tr) = \fq_m\fq_n(\Delta) - \fq_m(c)\fq_n(1) - \fq_m(1)\fq_n(c) - \sum_i \fq_m(l_{(i)}) \fq_n(l^{(i)})
$$
Similar with \eqref{eqn:commute}, we have:
$$
[\fq_m\fq_n(\Delta^\tr), \fq_p(\gamma)] = 0 \qquad \forall m,n,p \in \BZ\backslash 0, \ \gamma \in R(S)
$$
and therefore the vector space $V_\big$ of \eqref{eqn:products} factors as:
$$
V_\big = \text{Fock} \otimes W
$$
where $\text{Fock} = \text{Heis} \cdot v$ and:
$$
W = \bigoplus^{\text{unordered collections}}_{(m_1,n_1),...,(m_t,n_t) \in \BN^2} \BQ \cdot \prod_{i=1}^t \fq_{m_i} \fq_{n_i}(\Delta^\tr) \cdot v
$$
By analogy with Proposition \ref{prop:prop 1}, the vector space $W$ is preserved by the operators $\fq_m \fq_n(\Delta^\tr)$ for all $m,n \in \BZ \backslash 0$. Therefore, we will study the algebra generated by these operators, or more precisely, their renormalized versions:
\begin{equation}
\label{eqn:sp}
D_{m,n} = \frac {\sgn n}{\sqrt{|mn|}} \fq_{m} \fq_{n}(\Delta^\tr) + \delta_{m+n}^0 \frac {b}2 \cdot \text{Id}_{A^*(\Hilb)}
\end{equation}
for all $m \geq n \in \BZ\backslash 0$, where $b$ is the rank of the transcendental lattice. As suggested by Pavel Etingof, the operators \eqref{eqn:sp} generate a well-known Lie algebra: \\

\begin{definition}
\label{def:sp}

Let us consider matrices with infinitely many rows and columns, both indexed by $\BZ \backslash 0$. Let $\fg = \fsp_{2\infty}$ be the Lie algebra of such matrices where all but finitely many entries are 0, the submatrices with $(\text{rows}, \text{columns})$ indexed by $\BN \times (-\BN)$ and $(-\BN) \times \BN$ are symmetric, and the submatrix with $(\text{rows}, \text{columns})$ indexed by $(-\BN) \times (-\BN)$ is the negative transpose of the submatrix indexed by $\BN \times \BN$. \\
 
\end{definition}

\noindent In more detail, $\fg = \fsp_{2\infty}$ is the direct limit of the finite-dimensional Lie algebras $\mathfrak{sp}_{2N}$ as $N \rightarrow \infty$. Let us consider the elements:
\begin{equation} 
\label{eqn:sp gen}
\fg \ni d_{m,n} = E_{m, -n} + (\sgn m)(\sgn n)E_{n,-m} 
\end{equation} 
for any $m, n \in \BZ\backslash 0$, where $E_{m,n}$ denotes the elementary matrix with a single 1 at the intersection of row $m$ and column $n$, and 0 elsewhere. It is elementary to observe that the elements \eqref{eqn:sp gen} with $m \geq n$ generate $\fg$, and that they satisfy the following commutation relations:
\begin{multline}
\label{eqn:sp rel}
[d_{m,n}, d_{m',n'}] = \delta_{n+m'}^0 d_{m,n'} + \delta_{m+m'}^0 (\sgn m)(\sgn n) d_{n,n'} + \\ + \delta_{n+n'}^0 (\sgn m')(\sgn n') d_{m,m'} + \delta_{m+n'}^0 (\sgn m)(\sgn n)(\sgn m')(\sgn n') d_{n,m'}
\end{multline}
for all $m,n,m',n'$. \\

\begin{proposition}

The operators \eqref{eqn:sp} give an action of $\fg = \fsp_{2\infty}$ on $A^*(\eHilb)$. \\

\end{proposition}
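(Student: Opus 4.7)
The plan is to verify directly that the operators $D_{m,n}$ defined in \eqref{eqn:sp} satisfy the defining commutation relations \eqref{eqn:sp rel} of $\fsp_{2\infty}$. Write $Q_{m,n} = \fq_m \fq_n(\Delta^{\tr})$ and fix a K\"unneth decomposition $\Delta^{\tr} = \sum_i \alpha_i \otimes \beta_i \in A^*(S \times S)$. Since every commutator $[\fq_a(\gamma), \fq_b(\gamma')]$ is central by \eqref{eqn:heisenberg alt}, the standard quadratic commutator identity
$$[AB, CD] = [A,C]BD + [B,C]AD + [A,D]CB + [B,D]CA$$
reduces $[Q_{m,n}, Q_{m',n'}]$ to a sum of four terms, each involving a contraction of the tensor $\Delta^{\tr}_{12} \otimes \Delta^{\tr}_{34}$ along one pair of slots and weighted by a single Kronecker delta $\delta_{m+m'}^0$, $\delta_{n+m'}^0$, $\delta_{m+n'}^0$, or $\delta_{n+n'}^0$.

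The key algebraic input for the next step is that $\Delta^{\tr}$ is an idempotent correspondence: the splitting \eqref{eqn:transcendental} of $\Delta$ into a transcendental and an algebraic part, combined with the orthogonality relations \eqref{eqn:bv 7}, forces $p_{13*}(\Delta^{\tr}_{12}\cdot \Delta^{\tr}_{23}) = \Delta^{\tr}_{13}$. Each of the four contractions above has exactly this convolution shape and therefore reduces to $\Delta^{\tr}$ on the two remaining slots. Consequently,
\begin{equation*}
[Q_{m,n}, Q_{m',n'}] = m\delta_{m+m'}^0 Q_{n,n'} + n\delta_{n+m'}^0 Q_{m,n'} + m\delta_{m+n'}^0 Q_{m',n} + n\delta_{n+n'}^0 Q_{m',m}.
\end{equation*}

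The last step is to match this formula against \eqref{eqn:sp rel} after the rescaling $D_{m,n} = \frac{\sgn n}{\sqrt{|mn|}} Q_{m,n} + \delta_{m+n}^0 \frac{b}{2}\Id$. For each summand, the active delta condition forces an identity such as $|m'| = |m|$, so the square-root prefactors collapse, and the elementary identity $m\,\sgn m = |m|$ produces exactly the sign factors $(\sgn m)(\sgn n)$, $(\sgn m')(\sgn n')$, etc.\ appearing on the right-hand side of \eqref{eqn:sp rel}. The slot-swap discrepancy $Q_{m,n} - Q_{n,m} = m\delta_{m+n}^0 \cdot b \cdot \Id$, coming from \eqref{eqn:heisenberg alt} and the definition \eqref{eqn:b} of $b = \langle \Delta^{\tr}, \Delta\rangle$, is absorbed by the shift $\frac{b}{2}\Id$, so that no true central extension appears and $D_{n,m}$ for $n<m$ is identified with $(\sgn m)(\sgn n)D_{m,n}$, in parallel with the corresponding symmetry $d_{n,m} = (\sgn m)(\sgn n) d_{m,n}$ of the generators \eqref{eqn:sp gen}.

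The main obstacle is the careful bookkeeping in the last step when several Kronecker deltas fire simultaneously, for example when $m+m' = n+n' = 0$ with $m \neq \pm n$, or when $m+n' = n+m' = 0$. In such cases, multiple scalar contributions are produced by reordering $Q$-operators, and one must verify that the normal-ordering shift $\frac{b}{2}\Id$ is exactly tuned so that all these central pieces cancel. This calculation is formally the standard oscillator realization of $\fsp_{2\infty}$ by bilinears in a Heisenberg algebra on a $b$-dimensional space, which is precisely why the constant $\frac{b}{2}$ appears in \eqref{eqn:sp}.
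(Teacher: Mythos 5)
Your proposal is correct and follows essentially the same route as the paper, which likewise deduces the result by comparing the commutation relation \eqref{eqn:oioi} (with $\Delta^\tr$ in place of $\Delta$) against \eqref{eqn:sp rel} and attributes the constant $b/2$ to the reordering identity $\fq_{-n}\fq_n(\Delta^\tr) = \fq_n\fq_{-n}(\Delta^\tr) - nb\cdot\text{Id}$ coming from \eqref{eqn:b}. You merely make explicit two points the paper leaves implicit — that the validity of \eqref{eqn:oioi} for $\Delta^\tr$ rests on the idempotency $p_{13*}(\Delta^\tr_{12}\cdot\Delta^\tr_{23}) = \Delta^\tr_{13}$ (a consequence of \eqref{eqn:bv 7}), and the bookkeeping showing that the central contributions cancel — both of which check out.
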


\noindent The Proposition follows by comparing \eqref{eqn:oioi} (with $\Delta^\tr$ instead of $\Delta$) with \eqref{eqn:sp rel}. The occurence of $b$ stems from the fact that \eqref{eqn:b} implies that:
$$
\fq_{-n} \fq_n(\Delta^\tr) = \fq_n \fq_{-n}(\Delta^\tr) - n b \cdot \text{Id}
$$

\subsection{} Let us now analyze the submodule $W \subset A^*(\Hilb)$ generated by the operators \eqref{eqn:sp} acting on the vacuum vector $v$. It is easy to see that:
\begin{equation}
\label{eqn:highest}
D_{m,n} \cdot v = \frac b2 \cdot \delta_{m+n}^0 \qquad \forall m \geq n, 0 > n
\end{equation}
Therefore, we conclude that there exists a surjective map:
\begin{equation}
\label{eqn:verma}
M := U(\fg) \otimes_{U(\fp)} \BQ_\chi \twoheadrightarrow W
\end{equation}
where $\fp \subset \fg$ is the parabolic subalgebra consisting of all matrices with zeroes in the $(-\BN) \times \BN$ block, and the character $\chi : \fp \rightarrow \BQ$ is given by:
$$
\chi\begin{pmatrix} 0 & A \\ -A^T & B \end{pmatrix} = \frac b2 \cdot \text{Tr}(A)
$$
The $\fg$ module $M$ is called a parabolic Verma module, and we will write $v_\emptyset$ for the element $1 \otimes 1 \in M$. If we let $\ft \subset \fg$ be the Cartan subalgebra of diagonal matrices, then the weights of the Lie algebra $\fg$ can be expressed as:
\begin{equation}
\label{eqn:weights}
a_1 \e_1 + ... + a_n \e_n + ...
\end{equation}
where $a_i \in \BQ$ and $\e_i$ is the dual basis to the matrices $E_{-i,-i} - E_{i,i} \in \ft$ (to make sense of the weights of $\fsp_{2\infty}$, one must present this Lie algebra as the limit of $\fsp_{2N}$ as $N \rightarrow \infty$, whose weights take the form \eqref{eqn:weights} with $n$ up to $N$). The highest weight of the parabolic Verma module $M$ is $-b/2(\e_1+...+\e_n+...)$. Let us consider: 
$$
L \subset M
$$
to be the $\fg$--submodule generated by expressions:
\begin{equation}
\label{eqn:yin rep}
v^{m_1,...,m_{b+1}}_{n_1,...,n_{b+1}} = \sum_{\sigma \in \Sigma_{b+1}} (\sgn \sigma) d_{m_1,n_{\sigma(1)}} ... d_{m_{b+1},n_{\sigma(b+1)}} \cdot v_\emptyset
\end{equation}
as $m_1 < ... < m_{b+1}$ and  $n_1 < ... < n_{b+1}$ go over $\BN$. Note that the vectors \eqref{eqn:yin rep} correspond to the left-hand side of the Kimura relation \eqref{eqn:yin}, under the de Cataldo-Migliorini correspondence between $S^{2b+2}$ and $\Hilb$. Then Conjectures \ref{conj:main} and \ref{conj:v} would follow from the fact that \eqref{eqn:verma} factors through a map of $\fg$--modules:
$$
M/L \twoheadrightarrow W
$$
since this would ensure that the Kimura relation \eqref{eqn:yin} holds in Chow. Recall that the Levi subgroup $\fh$ of $\fp$ corresponds to submatrices whose rows and columns are indexed by $\BN$, and as such $\fh \cong \fgl_\infty$. We have the tautological representation $\fh \curvearrowright \BC^\infty$ with basis vectors $e_1,e_2,...$, rescaled so that:
\begin{equation}
\label{eqn:kin}
\fgl_\infty \ni \left(E_{m,n} - \frac b2 \delta_m^n \right) \cdot e_p = \delta_n^p e_m
\end{equation}
and we consider the representation $R = S^2(\wedge^{b+1} \BC^{\infty})$. \\

\begin{proposition}

There is a map of $\fg$-modules $U(\fg) \otimes_{U(\fp)} R \twoheadrightarrow L$ induced by:
\begin{equation}
\label{eqn:assignment}
1 \otimes (e_{m_1} \wedge ... \wedge e_{m_{b+1}})(e_{n_1} \wedge ... \wedge e_{n_{b+1}}) \leadsto v^{m_1,...,m_{b+1}}_{n_1,...,n_{b+1}}
\end{equation}
for all natural numbers $m_1 < ... < m_{b+1}$ and $n_1 < ... < n_{b+1}$. \\

\end{proposition}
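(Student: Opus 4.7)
The plan is to invoke the universal property of induced modules: any $\fp$--module map $R \to M$ extends uniquely to a $\fg$--module map $U(\fg) \otimes_{U(\fp)} R \to M$. I equip $R = S^2(\wedge^{b+1}\BC^{\infty})$ with the $\fp$--module structure in which the Levi $\fh \cong \fgl_\infty$ acts via the standard representation rescaled as in \eqref{eqn:kin}, while the nilpotent radical $\fn^+ \subset \fp$ (the block of $\fg$ opposite to the one containing the $d$--generators which appear in \eqref{eqn:yin rep}) acts by zero. Once the assignment \eqref{eqn:assignment} is shown to define a $\fp$--equivariant map $\phi : R \to M$, the extended $\fg$--map automatically surjects onto $L$, since $L$ is by definition the $\fg$--submodule of $M$ generated by the vectors $v^{m_1,\ldots,m_{b+1}}_{n_1,\ldots,n_{b+1}}$.

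The $\fp$--equivariance of $\phi$ splits into two checks. First, $\fn^+$ must annihilate every $v^{m_1,\ldots,m_{b+1}}_{n_1,\ldots,n_{b+1}}$: for a generator $d_{a,b} \in \fn^+$, note that the character $\chi$ vanishes on $\fn^+$, so $d_{a,b} \cdot v_\emptyset = 0$, and commute $d_{a,b}$ to the right through the product appearing in \eqref{eqn:yin rep}. By \eqref{eqn:sp rel} each commutator $[d_{a,b}, d_{m_i,n_{\sigma(i)}}]$ contributes up to four Kronecker--delta terms, and each of them replaces $d_{m_i,n_{\sigma(i)}}$ by a factor whose row/column indices are drawn from $\{a,b\}$. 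After such a substitution, two of the $b+1$ factors in the surviving product share an index, so the antisymmetrization over $\Sigma_{b+1}$ pairs the $\sigma$--term against the $\sigma \tau$--term, where $\tau$ is the transposition of the two now-coinciding indices; since $\sgn(\sigma \tau) = -\sgn(\sigma)$, these cancel in pairs.

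Second, a Levi element $h = E_{m,n} - E_{-n,-m} \in \fh$ must act on $v^{m_1,\ldots,m_{b+1}}_{n_1,\ldots,n_{b+1}}$ in agreement with the action of $E_{m,n} - \tfrac{b}{2} \delta_m^n$ on the tensor $(e_{m_1}\wedge\cdots\wedge e_{m_{b+1}})(e_{n_1}\wedge\cdots\wedge e_{n_{b+1}})$ dictated by \eqref{eqn:kin}. Commuting $h$ through the product in \eqref{eqn:yin rep} via \eqref{eqn:sp rel} replaces each occurrence of the index $-n$ by $-m$ in one position at a time, which reproduces the $E_{m,n}$--part of the Levi action summed across the $2(b+1)$ tensor slots. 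Meanwhile the direct action $h \cdot v_\emptyset = \chi(h) v_\emptyset = \tfrac{b}{2} \delta_m^n v_\emptyset$ reproduces exactly the scalar shift dictated by \eqref{eqn:kin}, once that shift is summed over the $2(b+1)$ tensor positions on the $R$--side.

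The main technical obstacle is the first check: collecting the four Kronecker--delta contributions coming out of \eqref{eqn:sp rel} and, for each of them, exhibiting the explicit transposition that produces the sign cancellation. The identity $d_{p,q} = (\sgn p)(\sgn q) d_{q,p}$ implicit in \eqref{eqn:sp gen} is crucial here: it lets a term arising from the commutator at position $i$ cancel against one arising at position $j$ after a row--column swap of the corresponding $d$--factor, which then corresponds to a genuine transposition of the antisymmetrizing permutation. Once this combinatorial pairing is carried out cleanly, the proposition follows immediately from Frobenius reciprocity together with the fact that the $v^{m_1,\ldots,m_{b+1}}_{n_1,\ldots,n_{b+1}}$ generate $L$ by definition.
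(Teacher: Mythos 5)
Your overall framework is the same as the paper's: reduce the Proposition to checking that the assignment \eqref{eqn:assignment} is $\fp$--equivariant (the nilpotent radical of $\fp$ annihilates the vectors \eqref{eqn:yin rep}, and the Levi acts compatibly with \eqref{eqn:kin}), after which the universal property of $U(\fg) \otimes_{U(\fp)} R$ and the definition of $L$ give the surjection; your Levi check is essentially the paper's comparison of \eqref{eqn:comm 1} with \eqref{eqn:comm 2}. The gap is in the nilpotent check. When you commute a generator $d_{-a,-c}$ of the nilpotent radical (with $a,c>0$) past a single factor $d_{m_i,n_{\sigma(i)}}$, relation \eqref{eqn:sp rel} does \emph{not} produce another creation operator ``whose row/column indices are drawn from $\{a,b\}$'': it produces a \emph{Levi} element such as $d_{-a,\,n_{\sigma(i)}}$. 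That element must then either be commuted past a second factor --- only this second commutator yields a creation operator, so the surviving terms are indexed by \emph{pairs} of positions --- or reach $v_\emptyset$, where it acts through the character $\chi$ and contributes a scalar proportional to $b$; this is the term $-b\,\delta_{\sigma(m)}^{n}(\cdots)$ on the first line of \eqref{eqn:label}. Your argument accounts for neither the character terms nor the fact that they must be cancelled by the ``mixed'' double-commutator terms.

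This is not merely a presentational omission, because the purely sign-theoretic cancellation you describe would apply verbatim to $\wedge^{k}\BC^\infty$ for any $k$, and the claim is false in that generality. Already for a single factor one computes $d_{-a,-c}\, d_{m,n}\, v_\emptyset = -b\left(\delta_a^n\delta_c^m+\delta_a^m\delta_c^n\right) v_\emptyset \neq 0$: the commutator $[d_{-a,-c},d_{m,n}]$ is a sum of Levi elements, each acting on $v_\emptyset$ by $\pm\tfrac{b}{2}\delta$ via \eqref{eqn:highest}, and there is no antisymmetrization available to kill this. The paper's proof therefore splits the double-commutator terms according to whether the two removed indices are both rows, both columns, or one of each: the same-color terms cancel in pairs against $\sigma\circ(\text{transposition})$ via $\sgn \sigma$ (this is the mechanism you identify), while the mixed terms cancel against the character contribution, and the count closes only because there are exactly $b+1$ wedge factors (hence $b$ mixed terms producing a given monomial) against the central value $\tfrac{b}{2}$ on each of the two matrix entries of a Levi generator. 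To complete your proof you must carry out the second-order commutator expansion explicitly and exhibit this second cancellation; the transposition pairing alone does not close the argument.
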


\begin{proof} First of all, let us prove that the nilpotent subalgebra of $\fp$ annihilates the right-hand side of \eqref{eqn:assignment}. To keep the notation simple, we will do it in the case when $m_i = n_i = i$, and leave the general case to the interested reader. We must prove the following for all $m,n > 0$ (the hats denote missing terms):
$$
d_{-m,-n} \cdot v_{1,...,b+1}^{1,...,b+1} = \sum_{\sigma \in \Sigma_{b+1}} (\sgn \sigma) d_{-m,-n} d_{1,\sigma(1)} ... d_{b+1,\sigma(b+1)} \cdot v_\emptyset \stackrel{\eqref{eqn:sp rel}} = \downarrow^{b+1}_{m-1}
$$
$$
\sum_{\sigma \in \Sigma_{b+1}} (\sgn \sigma) \Big[  d_{1,\sigma(1)}... d_{-n,\sigma(m)} ... d_{b+1,\sigma(b+1)} + d_{1,\sigma(1)}... d_{-n,\sigma^{-1}(m)} ... d_{b+1,\sigma(b+1)} \Big] v_\emptyset + (...)  
$$
where the summands marked by $(...)$ are the same ones as the terms directly preceding them, but with $m$ replaced by $n$. The symbol $\downarrow^{a}_{b}$ is 1 if $a>b$ and 0 otherwise. By \eqref{eqn:sp rel} and \eqref{eqn:highest}, the formula above equals:
\begin{equation}
\label{eqn:label}
\downarrow^{b+1}_{m-1} \downarrow^{b+1}_{n-1} \sum_{\sigma \in \Sigma_{b+1}} (\sgn \sigma) \left[ - b \delta_{\sigma(m)}^n d_{1,\sigma(1)}... \widehat{d_{m,n}}... d_{b+1, \sigma(b+1)}   + \downarrow^{\sigma^{-1}(n)}_{\sigma^{-1}(m)} \right.
\end{equation}
$$
... \widehat{d_{\sigma^{-1}(m),m}} ... \widehat{d_{\sigma^{-1}(n),n}} ... d_{\sigma^{-1}(m), \sigma^{-1}(n)} + \downarrow^{n}_{\sigma^{-1}(m)} ... \widehat{d_{\sigma^{-1}(m),m}} ... \widehat{d_{n,\sigma(n)}} ... d_{\sigma^{-1}(m),\sigma(n)} 
$$
$$
\left. +  \downarrow^{n}_{m} ... \widehat{d_{m,\sigma(m)}} ... \widehat{d_{n,\sigma(n)}} ... d_{\sigma(m),\sigma(n)} + \downarrow^{\sigma^{-1}(n)}_{m}... \widehat{d_{m,\sigma(m)}} ... \widehat{d_{\sigma^{-1}(n),n}} ... d_{\sigma(m),\sigma^{-1}(n)} \right] 
$$
$+(...)$. We claim that the expression above is 0. To see this, note that as $\sigma$ varies, the terms in the last two rows of the expression above (plus the corresponding two rows when $m$ and $n$ are switched, which are encoded in the summands denoted $(...)$) are in 1-to-1 correspondence to the outputs of the following algorithm: \\

\begin{itemize}
	
	\item draw a perfect matching between a set of red balls labeled by $1,...,b+1$ and a set of yellow balls labeled by $1,...,b+1$ (this corresponds to $\sigma$) \\
 	
	\item find any two balls labeled by $m$ and $n$, remove them, and then match together their former matches \\

\end{itemize}

\noindent If the two balls which were removed had the same color, their corresponding terms would cancel out from \eqref{eqn:label} due to the presence of the signature $\sgn \sigma$. If the two balls have different colors, then their corresponding terms are precisely canceled by the summand on the third line of \eqref{eqn:label}, which implies the fact that the total sum equals 0, as required. \\

\noindent The second thing we need to prove is that the action induced by the Levi subgroup $\fh \cong \fgl_\infty \subset \fg$ on the two sides of \eqref{eqn:assignment} is well-defined. To this end, let us identify the generator $d_{m,-n} \in \fh$ with the $\infty \times \infty$ matrix $E_{m,n}$ with entry 1 at the intersection of row $m$ and column $n$, and 0 everywhere else. As a consequence of \eqref{eqn:kin}:
$$
\left(E_{s,u} - \frac b2 \delta_s^u \right) \cdot (e_{m_1} \wedge ... \wedge e_{m_{b+1}})(e_{n_1} \wedge ... \wedge e_{n_{b+1}}) = 
$$
$$
= \sum^{i \text{ s.t.}}_{m_i = u} ( ... \wedge e_{m_{i-1}} \wedge e_s \wedge e_{m_{i+1}} \wedge ...)(e_{n_1} \wedge ... \wedge e_{n_{b+1}}) + 
$$
\begin{equation}
\label{eqn:comm 1}
+ \sum^{i \text{ s.t.}}_{n_i = u} (e_{m_1} \wedge ... \wedge e_{m_{b+1}})( ... \wedge e_{n_{i-1}} \wedge e_s \wedge e_{n_{i+1}} \wedge ...)
\end{equation}
is the sum of all terms obtained by all ways of isolating $e_u$ in the two wedge products, and replacing them by $e_s$. Similarly, formula \eqref{eqn:sp rel} implies that:
$$
\left(d_{s,-u} - \frac b2 \delta_s^u \right) v^{m_1,...,m_{b+1}}_{n_1,...,n_{b+1}} = \sum_{\sigma \in \Sigma_{b+1}} (\sgn \sigma) [d_{s,-u}, d_{m_1,n_{\sigma(1)}} ... d_{m_{b+1},n_{\sigma(b+1)}}] \cdot v_\emptyset =
$$
$$
= \sum_{\sigma \in \Sigma_{b+1}} (\sgn \sigma) \left[ \sum^{i \text{ s.t.}}_{m_i = u} ... d_{m_{i-1}, n_{\sigma(i-1)}} d_{s,n_{\sigma(i)}} d_{m_{i+1}, n_{\sigma(i+1)}} ...  +  \right. 
$$
$$
\left. + \sum^{i \text{ s.t.}}_{n_i = u} ... d_{m_{\sigma^{-1}(i-1)}, n_{i-1}} d_{s,m_i} d_{m_{\sigma^{-1}(i+1)}, n_{i+1}} ...  \right] \cdot v_\emptyset = 
$$
\begin{equation}
\label{eqn:comm 2}
= \sum^{i \text{ s.t.}}_{m_i = u} v_{...n_{i-1},n_i,n_{i+1},...}^{...,m_{i-1},s,m_{i+1},...} + \sum^{i \text{ s.t.}}_{n_i = u} v_{...n_{i-1},s,n_{i+1},...}^{...,m_{i-1},m_i,m_{i+1},...}
\end{equation}
Comparing \eqref{eqn:comm 1} with \eqref{eqn:comm 2} implies that \eqref{eqn:assignment} is a map of $\fgl_\infty$ modules. 
	
\end{proof}

\section{The geometry of nested Hilbert schemes}
\label{sec:lehn}

\subsection{}
\label{sub:nested}

The main purpose of the current Section is to prove Theorem \ref{thm:lehn}. Therefore, we let $S$ be an arbitrary smooth projective surface over $\BC$ for the remainder of this paper (in other words, we drop the K3 assumption), and let $\CI$ denote the universal ideal sheaf on $\Hilb_n \times S$. Because $\CI$ is flat over $\Hilb_n$, it inherits properties from the ideals of $\CO_S$ it parametrizes, such as having homological dimension 1: \\

\begin{proposition}
\label{prop:length 1}

(\cite{Shuf}) There exists a short exact sequence on $\eHilb_n \times S$:
\begin{equation}
\label{eqn:length 1}
0 \rightarrow \CW \rightarrow \CV \rightarrow \CI \rightarrow 0
\end{equation}
with $\CW$ and $\CV$ locally free. \\

\end{proposition}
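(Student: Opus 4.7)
The plan is to construct $\CV$ as an appropriate direct sum of line bundles surjecting onto $\CI$, and to verify that the kernel is automatically locally free for homological reasons. The argument splits into a preparatory bound on projective dimension, a cohomological construction of $\CV$, and a stalk-level check that the kernel is free.

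First I would check that $\CI$ has projective dimension at most $1$ at every stalk. Since $\Hilb_n$ is smooth of dimension $2n$ by Fogarty's theorem and $S$ is smooth of dimension $2$, the ambient variety $\Hilb_n \times S$ is smooth of dimension $2n+2$, and the universal subscheme $\CZ_n \subset \Hilb_n \times S$ is flat and finite of degree $n$ over the smooth base $\Hilb_n$. Being a finite flat cover of a Cohen--Macaulay scheme, $\CZ_n$ is itself Cohen--Macaulay of pure codimension $2$ in $\Hilb_n \times S$. The Auslander--Buchsbaum formula then yields $\mathrm{pd}\, \CO_{\CZ_n,p} = 2$ at every closed point $p$, and the defining sequence $0 \to \CI \to \CO_{\Hilb_n \times S} \to \CO_{\CZ_n} \to 0$ forces $\mathrm{pd}\, \CI_p \leq 1$.

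Next I would produce the surjection. Since $\Hilb_n \times S$ is projective, one may fix an ample line bundle $L$ and choose $k \gg 0$ so that $\CI \otimes L^{\otimes k}$ is globally generated; finitely many generating sections give a surjection $\CO^{\oplus N} \twoheadrightarrow \CI \otimes L^{\otimes k}$, and twisting back by $L^{\otimes -k}$ produces the desired surjection $\CV \twoheadrightarrow \CI$ with $\CV := (L^{\otimes -k})^{\oplus N}$ locally free.

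Finally, let $\CW$ be the kernel of this surjection. At each point $p$, the exact sequence $0 \to \CW_p \to \CV_p \to \CI_p \to 0$ realizes $\CW_p$ as the kernel of a surjection from a free module onto a module of projective dimension at most $1$; thus $\CW_p$ is projective, and finitely generated projective modules over Noetherian local rings are free, so the coherent sheaf $\CW$ is locally free. The only technical point of real substance is the first step---the Cohen--Macaulay property of $\CZ_n$---but this is immediate from its presentation as a finite flat cover of the smooth scheme $\Hilb_n$; the remaining steps are standard.
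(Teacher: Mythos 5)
Your proof is correct. Note that the paper does not prove this proposition at all --- it cites \cite{Shuf} --- and the sentence preceding the statement indicates the intended reasoning: since $\CI$ is flat over $\Hilb_n$ and restricts on each fiber $\{I\}\times S$ to an ideal sheaf of points on a smooth surface (a torsion-free rank-one sheaf, hence of homological dimension $\leq 1$), the universal sheaf inherits homological dimension $\leq 1$ from its fibers. You reach the same bound $\mathrm{pd}\,\CI_p \leq 1$ by a different route, working directly on the total space: $\CZ_n$ is finite flat over the smooth $\Hilb_n$, hence Cohen--Macaulay of pure codimension $2$, so Auslander--Buchsbaum gives $\mathrm{pd}\,\CO_{\CZ_n,p}=2$ and therefore $\mathrm{pd}\,\CI_p\leq 1$. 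This is a clean alternative that avoids invoking the semicontinuity/flatness argument needed to pass from fibers to the total space, at the cost of using the Cohen--Macaulay property of $\CZ_n$ (which, as you say, is immediate from finite flatness over a smooth base). The remaining steps --- Serre's theorem to get a surjection from a sum of line bundles, and the syzygy argument showing the kernel has free stalks, hence is locally free --- are the standard ones and coincide with what the reference does. The only cosmetic point: state the Auslander--Buchsbaum computation at points of $\CZ_n$ (elsewhere $\CI_p$ is free), and recall that local freeness of a coherent sheaf is detected on stalks, so checking closed points suffices.
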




\noindent Recall from Subsection \ref{sub:nakajima} the nested Hilbert scheme:
\begin{equation}
\label{eqn:def z1}
\Hilb_{n,n+1} = \Big \{ (I, I') \text{ such that } I \supset_x I' \text{ for some } x \in S \Big \} \subset \Hilb_{n} \times \Hilb_{n+1}
\end{equation}
Above and throughout this Section, we will write $I \supset_x I'$ if $I \supset I'$ and $I/I' \cong \BC_x$. \\

\begin{proposition}
\label{prop:smooth 1}

$\eHilb_{n,n+1}$ is smooth of dimension $2n+2$, and the morphism:
$$
\eHilb_{n,n+1} \xrightarrow{p_S} S, \qquad (I \supset I') \mapsto \emph{supp }I/I'
$$
is smooth. \\

\end{proposition}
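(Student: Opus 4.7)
The plan is to realize $\Hilb_{n,n+1}$ as the zero locus of a natural section of a vector bundle on a smooth ambient space built from the resolution in Proposition \ref{prop:length 1}. Concretely, let $\pi : \BP(\CV) := \BP_{\eHilb_n \times S}(\CV) \to \eHilb_n \times S$ be the Grothendieck projectivization parametrizing line bundle quotients of $\CV$, with tautological surjection $\tau : \pi^*\CV \twoheadrightarrow \CO_{\BP(\CV)}(1)$. The composition $s := \tau \circ \pi^*\iota : \pi^*\CW \to \CO_{\BP(\CV)}(1)$ gives a section of the rank-$\rank \CW$ bundle $\pi^*\CW^\vee \otimes \CO_{\BP(\CV)}(1)$. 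The key functorial observation is that a surjection $\CV|_{(I,x)} \twoheadrightarrow \BC$ annihilating $\CW|_{(I,x)}$ is the same as a surjection $I \twoheadrightarrow \CO_x$, which in turn corresponds to a choice of subideal $I' \subset_x I$. This exhibits $\Hilb_{n,n+1}$ as the zero locus $Z(s)$.

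The dimension count is straightforward: $\BP(\CV)$ is smooth of dimension $(2n+2) + \rank \CV - 1$, and the expected dimension of $Z(s)$ equals $\dim \BP(\CV) - \rank \CW = 2n + 1 + \rank \CI = 2n + 2$ since $\CI$ has generic rank $1$. The substantial part is verifying that $s$ vanishes transversally at every zero, which simultaneously gives the correct dimension and smoothness of $\Hilb_{n,n+1}$. At a point $p = (I, x, [q])$ of $Z(s)$, I would compute the differential $ds_p$ by splitting into vertical and horizontal directions relative to $\pi$: the vertical directions in $\BP(\CV)$ provide deformations of the quotient line, while horizontal directions provide deformations of $(I,x)$. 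The Koszul exactness of the length-1 resolution (which is exactly what Proposition \ref{prop:length 1} guarantees) is what ensures that the combined rank of $ds_p$ equals $\rank \CW$.

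For the smoothness of $p_S$, since both $\Hilb_{n,n+1}$ and $S$ are smooth, it suffices to show that $dp_S$ is surjective at every point $(I \supset_x I')$. Given $\xi \in T_x S$, I would produce a tangent vector $(\phi, \psi) \in T_{(I,I')} \eHilb_{n,n+1}$ mapping to $\xi$ by constructing an infinitesimal translation of the support point in the direction $\xi$: locally near $x \in S$, pick an étale coordinate chart, use the induced trivialization to translate $I$ and $I'$ infinitesimally along $\xi$, and check that this defines a deformation whose image under $dp_S$ is precisely $\xi$. This translation is not globally well-defined on $S$, but it only needs to be defined in a formal neighborhood of $x$ and can be glued with the trivial deformation away from $x$, since ideals are local.

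The main obstacle is the transversality of $s$ at points lying over the locus where $x \in \mathrm{supp}(\CO_S/I)$. At such points, the fiber of $\pi|_{Z(s)}$ jumps in dimension (because $\dim \Hom(I, \CO_x)$ jumps with the number of minimal generators of $I_x$), so one cannot deduce smoothness purely from the smoothness of $\pi$ restricted to the zero locus. The resolution is that the excess in the vertical direction is exactly compensated by an excess in the horizontal direction (as we move $(I,x)$), and this compensation is a direct consequence of the two-term locally free resolution of $\CI$ provided by Proposition \ref{prop:length 1}. Working out this cancellation explicitly, via a local model for $I$ near $x$, is the technical heart of the argument.
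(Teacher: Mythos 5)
The paper does not actually prove this proposition: it declares the smoothness of $\Hilb_{n,n+1}$ to be well-known (it is classical, due to Cheah, Tikhomirov, Ellingsrud--Str\o mme) and refers to \cite{Hecke} for the smoothness of $p_S$, noting that only flatness of $p_S$ is used later. Your ambient model --- the zero locus of $\sigma_- : \rho^*(\CW) \to \CO(1)$ on $\BP_{\Hilb_n \times S}(\CV)$ --- is exactly the paper's Proposition \ref{prop:1 minus}, but the paper uses it to compute pushforwards of powers of $c_1(\CL)$, not to deduce smoothness.

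There is a genuine gap in your argument, and it sits precisely where you place ``the technical heart.'' You assert that the Koszul exactness of the two-term resolution ``is what ensures that the combined rank of $ds_p$ equals $\rank \CW$.'' This is not correct: Koszul exactness of the section (i.e.\ regularity, which is what Proposition \ref{prop:1 minus} asserts) is equivalent to the zero locus having the expected codimension, hence being a local complete intersection; it says nothing about the differential having full rank pointwise, and a regular section can perfectly well have a singular zero locus (a nodal curve in a surface is already an example). The two-term locally free resolution exists for \emph{any} coherent sheaf of homological dimension one, so if smoothness followed formally from it, analogous nested correspondences that are genuinely singular would also come out smooth. The real content of the proposition is exactly the transversality at points where $x \in \mathrm{supp}(\CO_S/I)$ and the fiber of $\rho$ over $(I,x)$ jumps; you correctly identify this locus as the obstacle but then defer the verification to an unspecified ``local model,'' while simultaneously claiming it is ``a direct consequence'' of the resolution. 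It is not; the standard proofs proceed instead by a pointwise tangent-space computation, identifying $T_{(I \supset I')}\Hilb_{n,n+1}$ with the kernel of a natural map $\Hom(I,\CO_S/I) \oplus \Hom(I',\CO_S/I') \to \Hom(I',\CO_S/I)$ and showing its dimension is constantly $2n+2$ using Serre duality and the local structure of colength-one inclusions of ideals. Your sketch for the smoothness of $p_S$ (infinitesimally translating the pair $(I,I')$ in an \'etale chart near $x$) is the right idea and matches the submersion argument of \cite{Hecke}, but it presupposes the smoothness of $\Hilb_{n,n+1}$, which is the part your proposal leaves unproved.
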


\noindent The Proposition above is well-known, except perhaps the fact that $p_S$ is smooth. This fact is easy to show, for example it is proved in \cite{Hecke} by showing that $p_S$ is a submersion. All we will use in the present paper is that $p_S$ is flat. \\

\subsection{} 
\label{sub:nested scheme}

Let us describe the scheme structure of $\Hilb_{n,n+1}$. Consider the maps:
\begin{equation}
\label{eqn:diagram z1}
\xymatrix{& \Hilb_{n,n+1} \ar[ld]_{p_-} \ar[d]^{p_S} \ar[rd]^{p_+} & \\ \Hilb_{n} & S & \Hilb_{n+1}} \qquad \xymatrix{& (I \supset_x I')  \ar[ld]_{p_-} \ar[d]^{p_S} \ar[rd]^{p_+} & \\ I & x & I'}
\end{equation}
and the tautological line bundle on the nested Hilbert scheme:
\begin{equation}
\label{eqn:taut 1}
\xymatrix{\CL \ar@{..>}[d] \\ \Hilb_{n,n+1}} \qquad \CL|_{(I \supset I')} = I/I'
\end{equation}
Throughout the remainder of this paper, we will write $\BP(\CE) = \text{Proj} (\text{Sym} (\CE))$. \\

\begin{proposition}
\label{prop:1 minus}

Let $\CI$ be the universal ideal sheaf on $\eHilb_n \times S$, and let $\CV$, $\CW$ be the vector bundles of Proposition \ref{prop:length 1}. Then we have the commutative diagram:
\begin{equation}
\label{eqn:p minus} 
\xymatrix{\eHilb_{n,n+1} \ar@{^{(}->}[r] \ar[rd]_{p_- \times p_S} & \BP_{\eHilb_n \times S}(\CV) \ar[d]^\rho \\ & \eHilb_n \times S}
\end{equation}
where the horizontal arrow is the zero locus of the following map of vector bundles:
\begin{equation}
\label{eqn:section minus} 
\sigma_- : \rho^*(\CW) \rightarrow \rho^*(\CV) \rightarrow \CO(1)
\end{equation}
on $\BP_{\eHilb_n \times S}(\CV)$. Moreover, $\sigma_-$ is regular (i.e. its Koszul complex is acyclic except in the right-most place) and $\CL$ is isomorphic to the restriction of $\CO(1)$ to $\eHilb_{n,n+1}$. \\

\end{proposition}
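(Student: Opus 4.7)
The plan is to identify $\Hilb_{n,n+1}$ scheme-theoretically with the zero locus $Z(\sigma_-)$ by showing the two represent the same functor, and then to deduce regularity of $\sigma_-$ from a dimension count.

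First, I would describe the functor represented by $Z(\sigma_-) \subset \BP_{\Hilb_n \times S}(\CV)$. A $T$-point of $\BP_{\Hilb_n \times S}(\CV)$ consists of a morphism $(\phi, f) : T \to \Hilb_n \times S$ together with a line-bundle quotient $\alpha : (\phi, f)^*\CV \twoheadrightarrow L$, with $L$ the pullback of $\CO(1)$. The vanishing of $\sigma_-$ along such a point is equivalent to the composite $(\phi, f)^*\CW \to (\phi, f)^*\CV \to L$ being zero, i.e.\ to $\alpha$ factoring through a surjection $\bar\alpha : (\phi, f)^*\CI \twoheadrightarrow L$. Thus $Z(\sigma_-)$ represents the moduli problem of triples $(\phi, f, \bar\alpha)$.

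Next, I would exhibit a natural bijection between this functor and the one represented by $\Hilb_{n,n+1}$. Given $(\phi, f, \bar\alpha)$ as above, adjunction along the graph $\Gamma_f \subset T \times S$ converts $\bar\alpha$ into a surjection $(\phi \times \text{Id}_S)^*\CI \twoheadrightarrow \Gamma_{f,*}L$ on $T \times S$, whose kernel is a $T$-flat ideal sheaf with quotient of length $n+1$ on each fiber. This defines a classifying map $T \to \Hilb_{n+1}$ compatible with $\phi$ and $f$, hence a $T$-point of $\Hilb_{n,n+1}$. Conversely, a $T$-point of $\Hilb_{n,n+1}$ carries the short exact sequence $0 \to \CI_+ \to \CI_- \to \Gamma_{f,*}L \to 0$ on $T \times S$ (with $L$ the pullback of $\CL$), whose adjoint is the desired surjection of $(\phi, f)^*\CI$ onto $L$. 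These constructions are mutually inverse and functorial in $T$, giving an isomorphism $\Hilb_{n,n+1} \xrightarrow{\sim} Z(\sigma_-)$ of schemes over $\Hilb_n \times S$; the identification $\CL \cong \CO(1)|_{\Hilb_{n,n+1}}$ is built into the construction.

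To conclude regularity, I would count dimensions. Since $\CI$ has generic rank $1$, the resolution \eqref{eqn:length 1} forces $\rank \CV - \rank \CW = 1$, so $\dim \BP(\CV) = 2n+2+\rank \CV - 1$. Combined with $\dim \Hilb_{n,n+1} = 2n+2$ from Proposition \ref{prop:smooth 1}, this shows $\Hilb_{n,n+1} = Z(\sigma_-)$ has codimension exactly $\rank \CW$ in the smooth (hence Cohen-Macaulay) ambient $\BP(\CV)$. Since this matches the rank of the bundle $(\rho^*\CW)^\vee \otimes \CO(1)$ of which $\sigma_-$ is naturally a section, the standard criterion for a section of a vector bundle cutting out a subscheme of the expected codimension on a Cohen-Macaulay scheme implies that the Koszul complex of $\sigma_-$ is acyclic away from its rightmost term, i.e.\ $\sigma_-$ is regular. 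The main obstacle is the scheme-theoretic (as opposed to merely set-theoretic) matching in the second paragraph: one must handle the adjunction-plus-flatness argument carefully, verifying that what appears to be a surjection on fibers actually is one on $T \times S$. The remaining steps are either formal functorial bookkeeping or direct applications of standard results.
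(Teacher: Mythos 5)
The paper does not prove this proposition itself but defers to \cite{Hecke}, where the argument is essentially the one you give: identify $\Hilb_{n,n+1}$ with the zero locus of $\sigma_-$ via the functor of points (a line-bundle quotient $\rho^*\CV \twoheadrightarrow \CO(1)$ killing $\rho^*\CW$ is the same as a surjection from $\CI$ onto a line bundle supported on a graph, i.e.\ an elementary modification $I' \subset I$), and then deduce regularity of $\sigma_-$ from the fact that the zero locus has codimension $\rank \CW = \rank \CV - 1$ in the smooth ambient projectivization. Your write-up is correct and flags the two points that genuinely need care: the flatness argument showing the kernel of $\CI_T \twoheadrightarrow \Gamma_{f,*}L$ is a $T$-flat ideal sheaf of colength $n+1$, and the use of Cohen--Macaulayness of $\BP_{\Hilb_n \times S}(\CV)$ together with the purity of dimension of $\Hilb_{n,n+1}$ from Proposition \ref{prop:smooth 1} to conclude that the Koszul complex is exact away from the rightmost term.
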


\noindent Proposition \ref{prop:1 minus} was proved in \cite{Hecke}, as was the following analogous version with $p_-$ replaced by $p_+$. \\

\begin{proposition}
\label{prop:1 plus}

Let $\CI'$ be the universal ideal sheaf on $\eHilb_{n+1} \times S$, and let $\CV', \CW'$ be the vector bundles of Proposition \ref{prop:length 1}. Then we have the commutative diagram:
\begin{equation}
\label{eqn:p plus} 
\xymatrix{\eHilb_{n,n+1} \ar@{^{(}->}[r] \ar[rd]_{p_+ \times p_S} & \BP_{\eHilb_{n+1} \times S}({\CW'}^\vee \otimes \CK_S) \ar[d]^\rho \\ & \eHilb_{n+1} \times S}
\end{equation}
where the horizontal arrow is the zero locus of the following map of vector bundles:
\begin{equation}
\label{eqn:section plus} 
\sigma_+ : \rho^*({\CV'}^\vee \otimes \CK_S) \rightarrow \rho^*({\CW'}^\vee \otimes \CK_S) \rightarrow \CO(1)
\end{equation}
on $\BP_{\eHilb_{n+1} \times S}({\CW'}^\vee \otimes \CK_S)$. Moreover, $\sigma_+$ is regular and $\CL = \CO(-1)|_{\eHilb_{n,n+1}}$. \\

\end{proposition}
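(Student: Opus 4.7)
The proof parallels that of Proposition~\ref{prop:1 minus}, with Grothendieck--Serre duality along the smooth surface $S$ playing the role that the resolution of $\CI$ played in the ``minus'' version. The first step would be to analyze the map $\phi : \CV'^\vee \otimes \CK_S \to \CW'^\vee \otimes \CK_S$ on $\Hilb_{n+1} \times S$. Applying $R\sHom(-, \pi_2^*\CK_S)$ to the four-term complex $0 \to \CW' \to \CV' \to \CO \to \CO_{\CZ'} \to 0$, and using Grothendieck duality $R\sHom(\CO_{\CZ'}, \pi_2^*\CK_S) \cong \CO_{\CZ'}^\vee[-2]$ together with the local freeness of $\CW'$ and $\CV'$, the resulting spectral sequence collapses to
\[
0 \to \pi_2^*\CK_S \to \CV'^\vee \otimes \CK_S \xrightarrow{\phi} \CW'^\vee \otimes \CK_S \to \CO_{\CZ'}^\vee \to 0.
\]
In particular the cokernel of $\phi$ is canonically identified with $\CO_{\CZ'}^\vee$. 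Set-theoretically, then, the vanishing locus of $\sigma_+$ in $\BP_{\Hilb_{n+1} \times S}(\CW'^\vee \otimes \CK_S)$ consists of triples $(I',x,L)$ with $L$ a quotient line of $\CO_{\CZ'}^\vee|_{(I',x)} \cong \Ext^2(\CO_S/I', \BC_x \otimes \CK_S)$; applying Serre duality on $S$ fiberwise, such lines correspond to lines in $\Hom(\BC_x, \CO_S/I')$, i.e.\ to embeddings $\BC_x \hookrightarrow \CO_S/I'$ up to scalar, and hence to ideals $I \supset I'$ with $I/I' \cong \BC_x$ supported at $x$---namely, to points of $\Hilb_{n,n+1}$ over $(I',x)$.

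The second step would be to upgrade this bijection to a morphism $\iota : \Hilb_{n,n+1} \to \BP_{\Hilb_{n+1} \times S}(\CW'^\vee \otimes \CK_S)$ by producing a line-bundle quotient of $(p_+ \times p_S)^*(\CW'^\vee \otimes \CK_S)$ on $\Hilb_{n,n+1}$. The universal inclusion $\CI' \subset \CI$ on $\Hilb_{n,n+1} \times S$ sits in a short exact sequence $0 \to i_{\Delta*}\CL \to \CO_{\CZ'} \to \CO_{\CZ} \to 0$, where $i_\Delta$ is the graph of $p_S$; dualizing via Grothendieck duality along $i_\Delta$ and composing with the surjection $\CW'^\vee \otimes \CK_S \twoheadrightarrow \CO_{\CZ'}^\vee$ of the previous paragraph yields the desired quotient, canonically $\CL^{-1}$ (the sign coming from the Serre-duality twist). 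The universal property of $\BP$ then produces $\iota$, identifies $\CL \cong \CO(-1)|_{\Hilb_{n,n+1}}$, and exhibits $\iota$ as factoring through the vanishing locus of $\sigma_+$. The inverse morphism is constructed by mirroring this construction on the universal quotient on the vanishing locus, turning a quotient line into an embedding $\BC_x \hookrightarrow \CO_S/I'$ and extracting the associated $T$-point of $\Hilb_{n,n+1}$.

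Finally, regularity of $\sigma_+$ would follow from a dimension count: $\dim \Hilb_{n,n+1} = 2n+2$ (Proposition~\ref{prop:smooth 1}), $\dim \BP_{\Hilb_{n+1} \times S}(\CW'^\vee \otimes \CK_S) = 2n+3+\rank \CW'$, and $\rank \CV' - \rank \CW' = \rank \CI' = 1$ together force $\iota(\Hilb_{n,n+1})$ to have codimension exactly $\rank \CV'$ in the ambient bundle, matching the rank of the target bundle of $\sigma_+$; combined with smoothness of $\Hilb_{n,n+1}$, this forces the Koszul complex of $\sigma_+$ to be acyclic and upgrades the set-theoretic bijection to a scheme-theoretic identification. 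The main obstacle I expect is book-keeping the Grothendieck--Serre twists so as to canonically identify the cokernel of $\phi$ with $\CO_{\CZ'}^\vee$, produce the surjection $\CO_{\CZ'}^\vee \twoheadrightarrow i_{\Delta*}\CL^{-1}$ with the correct line-bundle twist, and thereby justify the sign change $\CL \cong \CO(-1)|_{\Hilb_{n,n+1}}$ relative to Proposition~\ref{prop:1 minus}.
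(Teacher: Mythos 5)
The paper itself does not prove this proposition --- it is quoted from \cite{Hecke}, where Propositions \ref{prop:1 minus} and \ref{prop:1 plus} are established together --- and your argument is essentially the one given there: dualizing the locally free resolution $0 \to \CW' \to \CV' \to \CI' \to 0$ against $\pi_2^*\CK_S$ identifies the cokernel of $\CV'^\vee \otimes \CK_S \to \CW'^\vee \otimes \CK_S$ with the (suitably twisted) linear dual of $\CO_{\CZ'}$, whose fiberwise quotient lines correspond by duality for finite-length modules to lines in $\Hom(\BC_x, \CO_S/I')$, i.e.\ to points of $\eHilb_{n,n+1}$, and regularity of $\sigma_+$ then follows from the dimension count against Proposition \ref{prop:smooth 1}. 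Your outline is correct; the only step deserving more care than a set-theoretic bijection plus a Cohen--Macaulay argument is the scheme-theoretic identification of the zero locus with $\eHilb_{n,n+1}$ (and the resulting identification $\CL \cong \CO(-1)$), which is cleanest via the functor of points exactly as you indicate in your second paragraph and as is done in \loccitt
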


\begin{proof} \emph{of Claim \ref{claim}:} We have the following formulas:
\begin{align}
&(p_+ \times p_S)_*(c_1(\CL)^k) = (-1)^k c_{k+2}(\CI \otimes \CK_S^{-1}) \label{eqn:segre 1} \\
&(p_- \times p_S)_*(c_1(\CL)^k) = (-1)^k c_k(-\CI) \label{eqn:segre 2}
\end{align}		
Let us provide a quick proof for \eqref{eqn:segre 2}, and leave the analogous case of \eqref{eqn:segre 1} as an exercise to the interested reader. Since top Chern classes of vector bundles are equal to zero loci of regular sections, Proposition \ref{prop:1 minus} implies that:
$$
(p_- \times p_S)_*(c_1(\CL)^k) = \rho_* \left(c(\rho^*(\CW^\vee), \CO(1)) \cdot c_1(\CO(1))^k \right)
$$
where $c(\CE,z) = \sum_{k = 0}^r c_k(\CE) z^{r-k}$ denotes the Chern polynomial of an arbitrary rank $r$ vector bundle $\CE$. Since $\rho$ is the projectivization of the vector bundle $\CV$, we have:
$$
\rho_* \left(c_1(\CO(1))^k \right) = c_{k-r+1}(-\CV^\vee) = \text{coefficient of }z^{-1} \text{ in } c(-\CV^\vee,z)\cdot z^k
$$
(by the theory of Segre classes). Combining the two displays above yields precisely:
$$
(p_- \times p_S)_*(c_1(\CL)^k) = \text{coefficient of }z^{-1} \text{ in } c(\CW^\vee-\CV^\vee,z) \cdot z^k
$$
Then $\CI = \CV/\CW$ and the fact that $c_k(-\CI^\vee) = (-1)^k c_k(-\CI)$ imply \eqref{eqn:segre 2}. \\

\noindent Since Claim \ref{claim} is stated in the context of a K3 surface $S$, we will assume $\CK_S \cong \CO_S$ for the remainder of this proof, as this will make our formulas simpler. Let us recall that the Chern character and the total Chern class:
$$
\ch(V) = \sum_{n \geq 0} \ch_n(V), \qquad \qquad c(V) = \sum_{n \geq 0} (-1)^n c_n(V)
$$
are connected by the operations:
\begin{align*} 
&\Psi(\ch(V)) = c(V), \quad \text{where} \quad \Psi \left(\sum_{n \geq 0} a_n \right) = \exp\left(\sum_{n \geq 1} -(n-1)!a_n \right) \\
&\Phi(c(V)) = \ch(V), \quad \text{where} \quad \Phi \left(\sum_{n \geq 0} - \frac {a_n}{(n-1)!} \right) = \log \left(\sum_{n \geq 0} a_n \right)
\end{align*} 
with $a_n$ being a degree $n$ class in the Chow group (the statements above are proved by checking them when $V$ is a line bundle, and then using the fact that $\ch$ is additive and $c$ is multiplicative). We have a short exact sequence on $\Hilb_{n,n+1} \times S$:
\begin{equation}
\label{eqn:ses}
0 \rightarrow p_+^*(\CI) \rightarrow p_-^*(\CI) \rightarrow \CL \otimes (p_S \times \text{Id})^*(\CO_\Delta) \rightarrow 0
\end{equation}
(where $\Delta :  S \hookrightarrow S \times S$ is the diagonal). In the relation above and throughout this computation, we abuse notation and write $\CL$ both for the tautological line bundle on $\Hilb_{n,n+1}$ and for its pull-back to $\Hilb_{n,n+1} \times S$ and $\Hilb_{n,n+1} \times S \times S$. The additivity of Chern character implies the following identity in $A^*(\Hilb_{n,n+1} \times S)$:
$$
\ch(p_\pm^*(\CI)) = \ch(p_\mp^*(\CI)) \mp \left(\sum_{n \geq 0} \frac {c_1(\CL)^n}{n!} \right) \cdot (p_S \times \text{Id})^*\left([\Delta] - \frac {[\Delta]^2}{12} \right)
$$
We may pass this identity through the transformation $\Psi$, and obtain:
\begin{equation}
\label{eqn:fra} 
c(p_\pm^*(\pm \CI)) = c(p_\mp^*(\pm \CI)) \left[1 + (p_S \times \text{Id})^*([\Delta]) \sum_{n=0}^\infty c_1(\CL)^n (n+1) \right] 
\end{equation}
(this fact uses $[\Delta]^3=0$, which follows from $[\Delta]$ being a codimension 2 class on the fourfold $S \times S$). With this in mind, we may perform the following computation:
\begin{multline*}
(p_+ \times p_{S_1} \times \text{Id}_{S_2})_* \circ \fr^a \circ (p_- \times \text{Id}_{S_2})^* \circ (p_- \times p_{S_2})_* (c_1(\CL)^b) \stackrel{\eqref{eqn:segre 2}}{=} \\ = (p_+ \times p_{S_1} \times \text{Id}_{S_2})_* \Big[c_1(\CL)^a \cdot (-1)^b c_b((p_- \times \text{Id}_{S_2})^*(-\CI)) \Big] \stackrel{\eqref{eqn:fra}}{=} \\ = (-1)^b (p_+ \times p_{S_1} \times \text{Id}_{S_2})_* \Big[c_1(\CL)^a c_b(-\CI_2) + [\Delta] \sum_{n=0}^\infty (-1)^{n} c_1(\CL)^{a+n} c_{b-n-2}(-\CI_2)  (n+1)  \Big]  \\
\stackrel{\eqref{eqn:segre 1}}{=} (-1)^{a+b} c_{a+2}(\CI_1) c_b(-\CI_2) + [\Delta] \sum_{n=0}^\infty (-1)^{a+b} c_{a+n+2}(\CI) c_{b-n-2}(-\CI) (n+1)
\end{multline*}
Above, $\CI_1$ and $\CI_2$ are the pull-backs of the universal ideal sheaves from the factors $\Hilb_n \times S_1$ and $\Hilb_n \times S_2$ (respectively) to the product $\Hilb_n \times S_1 \times S_2$. We suppress the indices on $\CI$ in the sum on the last line because $[\Delta] \CI_1 = [\Delta] \CI_2$. Similarly:
\begin{multline*} 
(p_- \times \text{Id}_{S_1} \times p_{S_2})_* \circ \fr^b \circ (p_+ \times \text{Id}_{S_1})^* \circ (p_+ \times p_{S_1})_* (c_1(\CL)^a) \stackrel{\eqref{eqn:segre 1}}{=} \\ = (p_- \times \text{Id}_{S_1} \times p_{S_2})_* \Big[ c_1(\CL)^b \cdot (-1)^a c_{a+2}((p_+ \times \text{Id}_{S_1})^*(\CI)) \Big] \stackrel{\eqref{eqn:fra}}{=} \\ = (-1)^a  (p_- \times \text{Id}_{S_1} \times p_{S_2})_* \Big[c_{a+2}(\CI_1) c_1(\CL)^b + [\Delta] \sum_{n=0}^\infty (-1)^n c_{a-n}(\CI_1) c_1(\CL)^{b+n}(n+1) \Big] \\ \stackrel{\eqref{eqn:segre 2}}{=}  (-1)^{a+b} c_{a+2}(\CI_1) c_b(-\CI_2) + [\Delta] \sum_{n=0}^\infty (-1)^{a+b} c_{a-n}(\CI) c_{b+n}(-\CI) (n+1)
\end{multline*}
Taking the difference between the two equations above yields:
\begin{equation}
\label{eqn:want 5}
\text{LHS of \eqref{eqn:want 4}} = [\Delta] \sum_{n \in \BZ} (-1)^{a+b} c_{a+n+1}(\CI) c_{b-n-1}(-\CI) n
\end{equation}
which we claim is precisely the right-hand side of \eqref{eqn:want 4}. This claim follows from the identity of Chern classes (we assume $a+b>0$ for simplicity, although the case $a+b \leq 0$ is analogous and left to the interested reader):
\begin{multline*}
\sum_{n \in \BZ} (-1)^{a+b} c_{a+n+1}(\CI) c_{b-n-1}(-\CI) n = \text{coefficient of }z^{a+b-1} \text{ in } \frac {d  c(\CI,z)}{dz} c(-\CI,z) - \\
- (a+1)  \cdot \text{coefficient of }z^{a+b} \text{ in } c(\CI,z) c(-\CI,z)= \text{coefficient of }z^{a+b-1} \text{ in } \frac {d \log(c(\CI,z))}{dz} 
\end{multline*}
$ = - (a+b)! \cdot \ch_{a+b}(\CI)$. The latter equality holds because both sides are additive in $\CI$, and it is straightforward to check it when $\CI$ is replaced by a line bundle. Since $[\CI] = 1 - [\CO_\CZ]$ in the $K$--theory group of $\Hilb \times S$, we have that $- \ch_{a+b}(\CI) = \ch_{a+b}(\CO_\CZ)$, hence the right-hand side of \eqref{eqn:want 5} equals the right-hand side of \eqref{eqn:want 4}. 

\end{proof}



\subsection{} 

Let us consider the following more complicated cousin of the scheme \eqref{eqn:def z1}:
\begin{multline}
\Hilb_{n-1,n,n+1} = \Big \{ (I, I',I'') \text{ such that } I \supset_x I' \supset_x I'' \\
\text{for some } x \in S \Big \} \subset \Hilb_{n-1} \times \Hilb_{n} \times \Hilb_{n+1} \label{eqn:punctual}
\end{multline}
The following result was proved in \cite{W surf}, in the analogous setup of moduli spaces of stable sheaves, but the modifications to the case of Hilbert schemes are minimal. \\


\begin{proposition}
	\label{prop:smooth 2}
	
$\eHilb_{n-1,n,n+1}$ is smooth of dimension $2n+1$. \\
	
\end{proposition}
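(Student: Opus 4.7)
The plan is to mimic the argument of \cite{W surf} for moduli of stable sheaves, replacing the universal sheaf by the universal ideal $\CI$ on $\Hilb_n \times S$. The starting observation is that, set-theoretically,
\[
\Hilb_{n-1,n,n+1} \;=\; \Hilb_{n-1,n} \times_{\Hilb_n \times S} \Hilb_{n,n+1},
\]
where the two maps to $\Hilb_n \times S$ send $(I, I') \mapsto (I', \mathrm{supp}(I/I'))$ and $(I', I'') \mapsto (I', \mathrm{supp}(I'/I''))$, respectively, and this identification upgrades to a scheme-theoretic one by the functorial descriptions of these nested Hilbert schemes.

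By Propositions \ref{prop:1 minus} and \ref{prop:1 plus}, each of the two factors embeds into a projective bundle over $\Hilb_n \times S$ as the zero locus of a regular section of an explicit vector bundle. Hence $\Hilb_{n-1,n,n+1}$ embeds into the fibered product $P$ of these two projective bundles over $\Hilb_n \times S$ --- a smooth variety of dimension $2n + \mathrm{rk}\,\CV + \mathrm{rk}\,\CW$ --- as the simultaneous zero locus of the two pulled-back sections $\sigma_-$ and $\sigma_+$. A naive count gives expected codimension $\mathrm{rk}\,\CV + \mathrm{rk}\,\CW$ and hence expected dimension $2n$, one less than the claimed $2n+1$. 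The discrepancy is explained by the fact that both $\sigma_-$ and $\sigma_+$ descend from the canonical inclusion $\CW \hookrightarrow \CV$ of Proposition \ref{prop:length 1}, and therefore share exactly a one-dimensional linear relation at every point of the common zero locus. Factoring out this relation produces a regular section of corank $\mathrm{rk}\,\CV + \mathrm{rk}\,\CW - 1$, whose zero locus is precisely $\Hilb_{n-1,n,n+1}$, smooth of the expected dimension $2n+1$.

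The main technical obstacle is the explicit identification of the shared linear relation and the verification that, modulo it, the combined section is jointly regular at every point. This is precisely the content of the computation in \cite{W surf}; since it relies only on the short exact sequence $0 \to \CW \to \CV \to \CI \to 0$ of Proposition \ref{prop:length 1} together with the projective bundle descriptions of Propositions \ref{prop:1 minus} and \ref{prop:1 plus}, it carries over essentially verbatim to the Hilbert scheme setting. An alternative route, bypassing the projective bundle embedding, is to compute the Zariski tangent space at an arbitrary closed point $(I, I', I'') \in \Hilb_{n-1,n,n+1}$ directly: identifying it with an $\mathrm{Ext}^1$ associated to the flag complex $I \supset I' \supset I''$ and using the smoothness of $\Hilb_{n-1}$, $\Hilb_n$, $\Hilb_{n+1}$ together with Serre duality on $S$, one checks by short-exact-sequence bookkeeping that the tangent space has dimension exactly $2n+1$, which combined with the upper bound from the projective bundle construction forces $\Hilb_{n-1,n,n+1}$ to be smooth of dimension $2n+1$ everywhere.
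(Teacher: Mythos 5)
The paper offers no argument for this Proposition beyond a citation: it states that the result ``was proved in \cite{W surf}, in the analogous setup of moduli spaces of stable sheaves,'' with minimal modifications. So the comparison is really between your sketch and that reference. Your global setup is consistent with how the paper later uses these spaces: the square \eqref{eqn:fiber square} is indeed Cartesian, the expected dimension of the fiber product is $2n$, and the excess is one-dimensional (the paper identifies the excess line bundle as $\CL_1 \otimes \CL_2^{-1} \otimes p_S^*(\CK_S^{-1})$ in Proposition \ref{prop:excess}, arising from the denominator in \eqref{eqn:section 2 plus}). But your primary route has a genuine gap: exhibiting $\Hilb_{n-1,n,n+1}$ as the zero locus of a \emph{regular} section of a vector bundle on a smooth ambient variety proves only that it is a local complete intersection of pure dimension $2n+1$ --- it does not prove smoothness. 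Regularity is a codimension/Koszul condition; smoothness of the zero locus requires the section to be transverse to the zero section at every point, an additional pointwise statement you never verify (a nodal cubic is the zero locus of a regular section of $\CO_{\BP^2}(3)$). Moreover, regularity of the induced corank-$(\mathrm{rk}\,\CV + \mathrm{rk}\,\CW - 1)$ section is itself usually established by first bounding the dimension of its zero locus, which is part of what is being proved, so this route is also partly circular. Finally, the ``shared one-dimensional linear relation'' needs to be formulated more carefully: the relevant factorization (as in Proposition \ref{prop:2 minus}) occurs for the second section only \emph{after} restricting to the zero locus of the first, via the tautological quotient $\Gamma^*(\CV) \twoheadrightarrow \CL$, not as a relation between the two sections on all of the ambient fibered product.

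Your alternative route --- computing the Zariski tangent space at every closed point $(I \supset I' \supset I'')$ and showing it is exactly $(2n+1)$-dimensional --- is the correct strategy and is essentially what \cite{W surf} does, but you only assert it. That the tangent space does not jump at any point, in particular at points where $I$ fails to be curvilinear at $x$, is the entire content of the Proposition and cannot be dismissed as ``short-exact-sequence bookkeeping'': smoothness of nested Hilbert schemes is the exception rather than the rule, and the result depends on special features of this particular two-step, common-support moduli problem. As written, your proposal establishes at most that $\Hilb_{n-1,n,n+1}$ is l.c.i.\ of dimension $2n+1$, with the smoothness still outsourced to \cite{W surf} --- which, to be fair, is exactly what the paper itself does.
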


\noindent Note that the scheme \eqref{eqn:punctual} is endowed with line bundles $\CL_1$ and $\CL_2$:
\begin{equation}
\label{eqn:taut 2}
\xymatrix{\CL_1, \CL_2 \ar@{..>}[d] \\ \Hilb_{n-1,n,n+1}} \qquad \CL_1|_{(I \supset I' \supset I'')} = I'/I'', \qquad \CL_2|_{(I \supset I' \supset I'')} = I/I'
\end{equation}
Consider also the natural maps which forget either $I''$ or $I$:
\begin{equation}
\label{eqn:diagram z2}
\xymatrix{& \Hilb_{n-1,n,n+1} \ar[ld]_{\pi_-} \ar[d]^{\pi_+} \\ \Hilb_{n-1,n} & \Hilb_{n,n+1}}
 \qquad \xymatrix{& (I \supset I' \supset I'') \ar[ld]_{\pi_-} \ar[d]^{\pi_+} \\ (I \supset I') & (I' \supset I'')}\end{equation}
Let $\Gamma : \Hilb_{n,n+1} \hookrightarrow \Hilb_{n,n+1} \times S$ denote the graph of the map $p_S$. \\ 

\begin{proposition}
\label{prop:2 minus}

(\cite{Hecke}) Let $\CI$ denote the universal ideal sheaf on $\eHilb_n \times S$. Then:
$$
\xymatrix{\eHilb_{n-1,n,n+1} \ar@{^{(}->}[r] \ar[rd]_{\pi_-} & \BP_{\eHilb_{n-1,n}}(\Gamma^*(\CV)) \ar[d]^{\rho_-} \\ & \eHilb_{n-1,n}} 
$$
$$
\xymatrix{\eHilb_{n-1,n,n+1} \ar@{^{(}->}[r] \ar[rd]_{\pi_+} & \BP_{\eHilb_{n,n+1}}(\Gamma^*({\CW}^\vee \otimes \CK_S)) \ar[d]^{\rho_+} \\ & \eHilb_{n,n+1}}
$$
where the horizontal arrows are the zero loci of the following maps of vector bundles:
\begin{align}
&\sigma'_- : \rho_-^* \left( \frac {\Gamma^*(\CW)}{\CL \otimes \CK_S} \right) \xrightarrow{\text{induced by }\sigma_-} \CO(1) \label{eqn:section 2 minus}  \\
&\sigma'_+ : \rho_+^* \left( \frac {\Gamma^*({\CV}^\vee \otimes \CK_S)}{\CL^{-1} \otimes \CK_S} \right) \xrightarrow{\text{induced by }\sigma_+} \CO(1) \label{eqn:section 2 plus} 
\end{align}
on $\BP_{\eHilb_{n-1,n}}(\Gamma^*(\CV))$ and $\BP_{\eHilb_{n,n+1}}(\Gamma^*({\CW}^\vee \otimes \CK))$, respectively (above, $\sigma_-$ and $\sigma_+$ denote the sections given by the same formulas as \eqref{eqn:section minus} and \eqref{eqn:section plus}, respectively). \\


\end{proposition}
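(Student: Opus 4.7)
The plan is to treat the two statements in parallel, each proved by base-changing Propositions \ref{prop:1 minus}/\ref{prop:1 plus} to $\Hilb_{n-1,n}$ (respectively $\Hilb_{n,n+1}$), and then observing that a canonical sub-line-bundle of the syzygy bundle accounts for the additional inclusion. Concretely for $\pi_-$, I would first identify the scheme-theoretic fiber product
$$\Hilb_{n,n+1} \times_{\Hilb_n \times S} \Hilb_{n-1,n} \cong \Hilb_{n-1,n,n+1},$$
where the two maps to $\Hilb_n \times S$ are $(p_-,p_S)$ and $(p_+,p_S)$ respectively (both remember the colength-$n$ ideal and the support point), and the isomorphism is the tautological $(I \supset_x I' \supset_x I'') \mapsto \bigl((I' \supset_x I''),(I \supset_x I')\bigr)$. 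By flat base change applied to Proposition \ref{prop:1 minus}, this presents $\Hilb_{n-1,n,n+1}$ as the zero locus of the pulled-back section $\Gamma^*\sigma_-$ inside $\BP_{\Hilb_{n-1,n}}(\Gamma^*\CV)$; at this stage the section is merely a morphism $\rho_-^*\Gamma^*\CW \to \CO(1)$ of rank equal to $\mathrm{rk}\,\CW$, which would give a zero locus of dimension $2n$, one too small.

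The heart of the argument is to verify that this section automatically vanishes along a canonical sub-line-bundle $\CL \otimes \CK_S \hookrightarrow \Gamma^*\CW$, so that it factors as $\sigma_-' : \rho_-^*(\Gamma^*\CW/\CL \otimes \CK_S) \to \CO(1)$. The sub-line-bundle should be produced from the tautological short exact sequence
$$0 \to \CI' \to \CI \to \CL \boxtimes \CO_\Delta \to 0$$
on $\Hilb_{n-1,n} \times S$ (an analog of \eqref{eqn:ses}), which encodes precisely the extra datum $I \supset I'$ not seen by $\Hilb_n$ alone. Applying $\sRHom(-,\CO)$ and comparing with the resolution $0 \to \CW \to \CV \to \CI' \to 0$, Serre duality for the codimension-two skyscraper factor $\CL \boxtimes \CO_\Delta$ introduces the $\CK_S$ twist and yields a canonical morphism $\CL \otimes \CK_S \to \Gamma^*\CW$ via the connecting homomorphism; its injectivity can be checked fiberwise, where $\CL|_{(I \supset I')} \otimes \CK_S|_x$ maps non-trivially into the $\mathrm{Ext}^1$-piece of the syzygies. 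The vanishing of $\Gamma^*\sigma_-$ along this sub-line-bundle is then a local statement: the relation it represents already factors through $\CI' \hookrightarrow \CI$, so it lies in the kernel of the composition $\rho_-^*\Gamma^*\CW \to \rho_-^*\Gamma^*\CV \to \CO(1)$.

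Regularity of $\sigma_-'$ follows from a dimension count using Proposition \ref{prop:smooth 2}: the expected codimension of its zero locus equals $\mathrm{rk}\,\CW - 1$, giving expected dimension $(2n + \mathrm{rk}\,\CV - 1) - (\mathrm{rk}\,\CW - 1) = 2n+1 = \dim \Hilb_{n-1,n,n+1}$, and smoothness then forces the Koszul complex of $\sigma_-'$ to be acyclic. The case of $\pi_+$ is proved by the dual construction: base change Proposition \ref{prop:1 plus} along $(p_-,p_S) : \Hilb_{n,n+1} \to \Hilb_n \times S$, then build the canonical sub-line-bundle $\CL^{-1} \otimes \CK_S \hookrightarrow \Gamma^*(\CV^\vee \otimes \CK_S)$ by dualizing the argument (the sign flip $\CL \rightsquigarrow \CL^{-1}$ reflects that $\pi_+$ adjoins a larger ideal $I$ above the pair, rather than a smaller $I''$ below). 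The main obstacle is the middle step — pinning down the canonical sub-line-bundle and its vanishing property — since $\CV,\CW$ are only defined up to non-canonical choices (Proposition \ref{prop:length 1}), so the construction must be phrased in a way that is intrinsic: either derived-categorically via $\sRHom$, or through universal extension classes that are manifestly independent of the chosen presentation.
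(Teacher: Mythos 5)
First, a caveat: the paper does not prove this Proposition at all --- it is quoted verbatim from \cite{Hecke} --- so your proposal can only be measured against that reference. With that understood, your outline is the right one and is essentially the argument: identify $\eHilb_{n-1,n,n+1}$ with the scheme-theoretic fiber product of \eqref{eqn:fiber square}, base-change the one-step presentations of Propositions \ref{prop:1 minus} and \ref{prop:1 plus}, observe that the pulled-back section is one equation too many (expected dimension $2n$ against the actual $2n+1$ of Proposition \ref{prop:smooth 2}), exhibit a line sub-bundle of the pulled-back syzygy bundle on which the section vanishes identically, and deduce regularity of the induced section from the dimension count together with the Cohen--Macaulayness of the smooth ambient projective bundle. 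Your dimension bookkeeping in the last step is correct. (``Flat base change'' is a misnomer --- the maps $p_\pm \times p_S$ are not flat --- but none is needed: zero loci of sections commute with arbitrary base change at the level of schemes.)

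The genuinely incomplete step is the middle one, and you have made it harder than necessary. You do not need $\sRHom$ or Serre duality to produce the sub-line-bundle, and the non-canonicity of $\CV, \CW$ is a red herring: the Proposition is stated for a fixed resolution $0 \to \CW \to \CV \to \CI \to 0$, and once that is fixed, Proposition \ref{prop:1 plus} (applied to the step $n-1 \to n$, so that its $\CW'$ is the present $\CW$) embeds $\eHilb_{n-1,n}$ into $\BP_{\eHilb_n \times S}(\CW^\vee \otimes \CK_S)$ with $\CL = \CO(-1)$; the tautological quotient $\Gamma^*(\CW^\vee \otimes \CK_S) \twoheadrightarrow \CL^{-1}$ then dualizes to a \emph{fiberwise injective} map $\CL \otimes \CK_S \hookrightarrow \Gamma^*(\CW)$, which gives both the sub-bundle and the local freeness of the quotient in one stroke (dually, $\CL^{-1} \otimes \CK_S \hookrightarrow \Gamma^*(\CV^\vee \otimes \CK_S)$ for the $\pi_+$ case is the dual of $\Gamma^*(\CV) \twoheadrightarrow \CO(1)|_{\eHilb_{n,n+1}} = \CL$ from Proposition \ref{prop:1 minus}). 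What still requires an actual argument --- and what you assert rather than prove --- is that this sub-bundle lies in the kernel of $\Gamma^*(\CW) \to \Gamma^*(\CV)$; since $\Hom_{\BP(\CE)}(\rho^*\CM, \CO(1)) \cong \Hom(\CM, \CE)$, this is exactly what is needed for the composite with $\CO(1)$ to vanish on the \emph{whole} projective bundle, so that $\sigma_-$ factors through the quotient as a map of bundles. The clean mechanism is the long exact sequence of sheaf-theoretic $\Tor(-, \CO_\Gamma)$ applied to the tautological sequence (the analogue of \eqref{eqn:ses} on $\eHilb_{n-1,n} \times S$): the boundary map $\Tor_2(\CL \otimes \CO_\Gamma, \CO_\Gamma) \cong \CL \otimes p_S^*(\CK_S) \to \Tor_1(\CI', \CO_\Gamma) = \ker(\Gamma^*(\CW) \to \Gamma^*(\CV))$ is injective because the colength-$(n-1)$ ideal sheaf has homological dimension $1$, and the normal bundle of the graph $\Gamma$ being $p_S^*(\eTan_S)$ is what produces the $\CK_S$ twist. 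One must then match this boundary map with the sub-bundle above. Your sketch points at this computation but does not carry it out; it is the content of the proof in \cite{Hecke}.
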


\noindent Moreover, the line bundles $\CL_1$ and $\CL_2$ are isomorphic to the restrictions to $\Hilb_{n-1,n,n+1}$ of the tautological line bundles $\CO_{\BP_{\Hilb_{n-1,n}}}(1)$ and $\CO_{\BP_{\Hilb_{n,n+1}}}(-1)$, respectively. Therefore, the definition of Chern/Segre classes implies that:
\begin{align}
&\pi_{+*}(c_1(\CL_2)^k) = (-1)^k c_{k+1} \left(\CI \otimes \CK_S^{-1} - \CL \otimes \CK_S^{-1} \right) \label{eqn:segre 4} \\
&\pi_{-*}(c_1(\CL_1)^k) = (-1)^{k-1}  c_{k-1}\left(- \CI - \CL \otimes \CK_S \right) \label{eqn:segre 3} 
\end{align}

\subsection{} Suppose we have a fiber square of schemes with all maps being proper:
\begin{equation}
\label{eqn:fiber square 0}
\xymatrix{Y' \ar[r]^{\iota'} \ar[d]_{\eta'} & X' \ar[d]^\eta \\ Y \ar[r]^\iota & X}
\end{equation}
and we assume that the map $\iota$ is a regular embedding, cut out by a section $\sigma : \CO_X \rightarrow V$ of a vector bundle $V$ on $X$. It is well-known that if $X$ and $X'$ are Cohen-Macaulay, then the fiber square is derived (i.e. $\iota'$ is a regular embedding cut out by the section $\eta^*(\sigma) : \CO_{X'} \rightarrow \eta^*(V)$) if and only if:
\begin{equation}
\label{eqn:equality}
\dim X' - \dim Y' = \dim X - \dim Y
\end{equation}
On the other hand, suppose $\eta^*(\sigma)$ lands in the kernel of a map $\eta^*(V) \rightarrow E$, where $E$ is a rank $r$ vector bundle on $X'$. Then the embedding $\iota'$ is regular, and cut out by the induced map $\eta^*(\sigma) : \CO_{X'} \rightarrow \text{Ker }(\eta^*(V) \rightarrow E)$, if and only if:
\begin{equation}
\label{eqn:excess equality}
\dim X' - \dim Y' = \dim X - \dim Y + r
\end{equation}
We will refer to this situation as \emph{excess intersection}, and call $E$ the \emph{excess bundle}. Then the following formulas are well-known (\cite{F}): \\

\begin{lemma}
\label{lem:intersection}

If the square \eqref{eqn:fiber square 0} is derived (i.e. the setup of \eqref{eqn:equality} holds), then we have the following equality of morphisms of Chow groups:
\begin{equation}
\label{eqn:base change}
\eta'_* \circ {\iota'}^* = \iota^* \circ \eta_* 
\end{equation}
If we are in the excess intersection situation (i.e. the setup of \eqref{eqn:excess equality} holds), then we have the following equality of morphisms of Chow groups:
\begin{equation}
\label{eqn:excess base change}
\eta'_* \circ (e(E) \cdot {\iota'}^*) = \iota^* \circ \eta_* 
\end{equation}
where $e$ denotes the Euler class (or top Chern class) of the vector bundle $E$. \\

\end{lemma}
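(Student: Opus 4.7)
The plan is to package both identities as consequences of the standard behavior of the refined Gysin pullback along a regular embedding, as developed in Chapter 6 of \cite{F}, rather than verifying them by direct cycle manipulations. The formulas \eqref{eqn:base change} and \eqref{eqn:excess base change} are exactly the shadows on Chow groups of two special cases of a single general principle, and handling them uniformly keeps the argument short.

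The first step is to invoke, for the regular embedding $\iota:Y \hookrightarrow X$ of codimension $\rank V$, the refined Gysin morphism $\iota^! : A_*(X') \to A_{*-\rank V}(Y')$ together with its fundamental compatibility with proper pushforward, namely $\eta'_* \circ \iota^! = \iota^* \circ \eta_*$ (Theorem 6.2(a) of \loccitt). With this in hand, both halves of the lemma reduce to the single task of identifying $\iota^!$ with the appropriate explicit operator on $A_*(X')$ appearing on the left-hand side of each formula.

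In the derived case, the Cohen--Macaulay hypothesis together with the dimensional equality \eqref{eqn:equality} guarantees that the pulled-back section $\eta^*(\sigma) \in \Gamma(X', \eta^*(V))$ is again regular and cuts out $Y'$ in $X'$ in the expected codimension, with normal bundle $N_{Y'/X'} \cong {\eta'}^*(N_{Y/X})$. In this Tor-independent situation the refined and naive pullbacks agree, $\iota^! = {\iota'}^*$ on cycles pulled back from $X'$, yielding \eqref{eqn:base change}. In the excess case, the section $\eta^*(\sigma)$ factors through the kernel subbundle of $\eta^*(V) \twoheadrightarrow E$, and the dimensional equality \eqref{eqn:excess equality} forces this subbundle to cut out $Y'$ regularly; the normal bundle of $\iota'$ is then that kernel restricted to $Y'$, and $E|_{Y'}$ is precisely the excess bundle. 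The excess intersection formula (Proposition 6.3 of \loccitt) then gives $\iota^! = e(E) \cdot {\iota'}^*$ on cycles pulled back from $X'$, which is \eqref{eqn:excess base change}.

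The only portion requiring any genuine verification is the match between the excess bundle exhibited in the statement---the prescribed quotient $E$ of $\eta^*(V)$---and the excess bundle appearing in Fulton's formula, which is defined intrinsically from the two normal bundle sequences. This comparison is a direct diagram chase from the regularity of $\sigma$ and its factored pullback, and presents no conceptual obstacle; consequently the proof contains no hard step, but one has to be careful about the distinction between refined and naive pullbacks in the excess case.
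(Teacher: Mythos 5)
Your argument is correct and is exactly the route the paper intends: the paper offers no proof of Lemma \ref{lem:intersection} beyond declaring the formulas ``well-known'' with a citation to \cite{F}, and your write-up is precisely the standard unpacking of that citation --- compatibility of the refined Gysin map with proper pushforward (Fulton, Theorem 6.2(a)) combined with the excess intersection formula (Fulton, Theorem 6.3), with the derived case being the rank-zero-excess-bundle special case. The only stylistic quibble is the phrase ``on cycles pulled back from $X'$'': the refined Gysin $\iota^!$ is defined on all of $A_*(X')$, which is what the operator identities \eqref{eqn:base change} and \eqref{eqn:excess base change} require, and your argument in fact delivers that.
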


\subsection{} Consider the following diagram, obtained by combining \eqref{eqn:diagram z1} with \eqref{eqn:diagram z2}:
\begin{equation}
\label{eqn:fiber square}
\xymatrix{& \Hilb_{n-1,n,n+1} \ar[ld]_{\pi_-} \ar[rd]^{\pi_+} & \\ \Hilb_{n-1,n} \ar[rd]_{p_+ \times p_S} & & \Hilb_{n,n+1} \ar[ld]^{p_- \times p_S} \\
& \Hilb_{n} \times S &}
\end{equation}
This diagram is a fiber square, as can be observed by recalling the definitions of the nested Hilbert schemes involved as answers to moduli problems. It is not a derived fiber square, which can be observed by comparing \eqref{eqn:section minus} with \eqref{eqn:section 2 minus}. However, it is an instance of excess intersection \eqref{eqn:excess equality} (see \cite{Hecke} for a proof), hence we obtain the following special case of \eqref{eqn:excess base change}: \\

\begin{proposition}
\label{prop:excess}

We have the following equality of maps between Chow groups:
\begin{equation}
\label{eqn:excess}
\pi_{+*} \circ \Big[(l_1 - l_2 - p_S^*(t)) \cdot \pi_-^* \Big] = (p_- \times p_S)^* \circ (p_+ \times p_S)_*
\end{equation}
where $l_1 = c_1(\CL_1)$, $l_2 = c_1(\CL_2)$, $t = c_1(\CK_S)$ are classes in the Chow groups of all spaces in \eqref{eqn:fiber square}. The analogous result holds with the roles of $+$ and $-$ switched. \\

\end{proposition}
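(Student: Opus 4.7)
The plan is to apply the excess intersection formula of Lemma \ref{lem:intersection}, after reinterpreting the fiber square \eqref{eqn:fiber square} as a two-story diagram passing through the projective bundles of Propositions \ref{prop:1 minus} and \ref{prop:2 minus}. First, I would factor $p_- \times p_S = \rho \circ j$, where $j$ is the regular embedding of Proposition \ref{prop:1 minus} cut out by $\sigma_-$, and then base change this factorization along $p_+ \times p_S$ to obtain
\begin{equation*}
\xymatrix{
\eHilb_{n-1,n,n+1} \ar[r]^-{j'} \ar[d]_{\pi_+} & \BP_{\eHilb_{n-1,n}}(\Gamma^*\CV) \ar[r]^-{\bar\rho} \ar[d]^{\bar p} & \eHilb_{n-1,n} \ar[d]^{p_+ \times p_S} \\
\eHilb_{n,n+1} \ar[r]^-{j} & \BP_{\eHilb_n \times S}(\CV) \ar[r]^-{\rho} & \eHilb_n \times S
}
\end{equation*}
with $j'$ the regular embedding of Proposition \ref{prop:2 minus} and with $\bar\rho \circ j' = \pi_-$. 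The right square is a projective bundle pullback, hence a derived fiber square to which base change \eqref{eqn:base change} applies, giving $\rho^* \circ (p_+ \times p_S)_* = \bar p_* \circ \bar\rho^*$.

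For the left square, I would identify it as an instance of excess intersection whose excess line bundle restricts on $\eHilb_{n-1,n,n+1}$ to one with first Chern class $l_1 - l_2 - p_S^*(t)$. The key input is that the pullback $\bar p^*(\sigma_-)$ factors through the subbundle inclusion $\bar\rho^*(\Gamma^*\CW/(\CL \otimes \CK_S))^\vee \otimes \CO(1) \hookrightarrow \bar\rho^*\Gamma^*\CW^\vee \otimes \CO(1)$ as the section $\sigma'_-$; this is precisely the comparison between \eqref{eqn:section minus} and \eqref{eqn:section 2 minus}. Dualizing the short exact sequence $0 \to \CL \otimes \CK_S \to \Gamma^*\CW \to \Gamma^*\CW/(\CL \otimes \CK_S) \to 0$ and tensoring with $\CO(1)$ identifies the excess bundle as $E = \bar\rho^*(\CL^{-1} \otimes \CK_S^{-1}) \otimes \CO(1)$. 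Restricting to $\eHilb_{n-1,n,n+1}$, Proposition \ref{prop:2 minus} supplies $\CO(1)|_{\eHilb_{n-1,n,n+1}} = \CL_1$, while the definitions \eqref{eqn:taut 1}--\eqref{eqn:taut 2} give $\bar\rho^*\CL|_{\eHilb_{n-1,n,n+1}} = \pi_-^*\CL = \CL_2$, yielding the desired value of $c_1(E|_{\eHilb_{n-1,n,n+1}})$.

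Lastly, I would assemble these pieces into the chain
\begin{equation*}
(p_- \times p_S)^* (p_+ \times p_S)_* = j^* \rho^* (p_+ \times p_S)_* = j^* \bar p_* \bar\rho^* = \pi_{+*}\bigl[(l_1 - l_2 - p_S^*(t)) \cdot {j'}^* \bar\rho^*\bigr] = \pi_{+*}\bigl[(l_1 - l_2 - p_S^*(t)) \cdot \pi_-^*\bigr],
\end{equation*}
applying in turn the factorization $p_- \times p_S = \rho \circ j$, the base change \eqref{eqn:base change} on the right square, the excess intersection formula \eqref{eqn:excess base change} on the left square, and the identity $\bar\rho \circ j' = \pi_-$. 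The statement with the roles of $+$ and $-$ switched follows by the symmetric argument using Proposition \ref{prop:1 plus} in place of Proposition \ref{prop:1 minus}. The main technical point I anticipate is verifying that $\bar p^*(\sigma_-)$ really lies in the stated subbundle (equivalently, that the pulled-back section factors through the quotient of Proposition \ref{prop:2 minus}); once this is in hand, the identification of the excess line bundle and the closing computation are routine.
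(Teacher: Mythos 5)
Your argument is correct and is essentially the paper's own proof, made explicit: the paper simply notes that \eqref{eqn:fiber square} is an excess intersection situation with excess bundle $\CL_1 \otimes \CL_2^{-1} \otimes p_S^*(\CK_S^{-1})$ (deferring the verification to \cite{Hecke}), while you supply the factorization through the projective bundles of Propositions \ref{prop:1 minus} and \ref{prop:2 minus} that reduces the claim to Lemma \ref{lem:intersection}. The only ingredient you leave implicit is the regularity of the induced section $\sigma'_-$, i.e.\ that \eqref{eqn:excess equality} holds with $r=1$, which follows from the dimension counts of Propositions \ref{prop:smooth 1} and \ref{prop:smooth 2} --- exactly the input the paper itself cites.
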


\noindent Indeed, the only thing to note is that the excess bundle is $\CL_1 \otimes \CL_2^{-1} \otimes p_S^*(\CK_S^{-1})$ (which arises from the dual of the denominator of \eqref{eqn:section 2 plus}). The fact that \eqref{eqn:excess equality} holds with $r=1$ is a consequence of Propositions \ref{prop:smooth 1} and \ref{prop:smooth 2}. \\


\subsection{} The Hilbert scheme $\Hilb_n$ has dimension $2n$. If we fix a closed point $x \in S$, then we may define the \emph{defect} of an ideal $I \subset \CO_S$ at the point $x$: this is simply the length at $x$ of the finite length sheaf $\CO_S/I$. We have the locally closed stratificaton:
\begin{equation}
\label{eqn:strata}
\Hilb = \bigsqcup_{d=0}^\infty \Hilb_{\text{def }d}
\end{equation}
by defect at the chosen point $x \in S$. It is well-known that $\Hilb_{\text{def }0}$ is open, while:
\begin{equation}
\label{eqn:codim}
\text{codim } \Hilb_{\text{def }d} = d + 1
\end{equation}
for any $d>0$ (see, for example, Lemma 6.10 of \cite{Nak}). \\

\begin{proposition}
\label{prop:generators}

Consider any finite colength ideal $I \subset \CO_S$ and any closed point $x \in S$. Then we have the following estimate:
\begin{equation}
\label{eqn:generators}
\dim_{\BC} \emph{Hom}(I,\BC_x) - 1 \leq \sqrt{2n + \frac 14} - \frac 12
\end{equation}
where $n$ is the colength of $I$ at the point $x$. When $n$ is small, we actually have:
\begin{align}
&\text{if }n=0, \text{ then } \dim_{\BC} \emph{Hom}(I,\BC_x) - 1 = 0 \label{eqn:generators 0} \\
&\text{if }n=1, \text{ then } \dim_{\BC} \emph{Hom}(I,\BC_x) - 1 = 1 \label{eqn:generators 1} \\
&\text{if }n=2, \text{ then } \dim_{\BC} \emph{Hom}(I,\BC_x) - 1 = 1 \label{eqn:generators 2} \\
&\text{if }n=3, \text{ then } \dim_{\BC} \emph{Hom}(I,\BC_x) - 1 = 1 \text{ or }2 \label{eqn:generators 3}
\end{align}
where in \eqref{eqn:generators 3}, the value $2$ is taken on a positive codimension locus of ideals. \\
	
\end{proposition}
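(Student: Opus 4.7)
The plan is to reinterpret $\dim_{\BC}\Hom(I,\BC_x)$ as the minimal number of generators $\mu(I_x)$ of the stalk $I_x \subset \CO_{S,x}$, and then bound $\mu(I_x)$ by means of the Hilbert--Burch theorem. The first step is routine: because $\BC_x$ is the skyscraper at $x$ and any $\CO_{S,x}$-linear map $I_x \to \BC$ factors through $I_x / \fm_x I_x$, Nakayama's lemma gives
\[
\dim_{\BC}\Hom(I,\BC_x) \;=\; \dim_{\BC}\bigl(I_x/\fm_x I_x\bigr) \;=\; \mu(I_x),
\]
and we denote this integer by $\mu$. The case $n = 0$ is trivial (then $I_x = \CO_{S,x}$ and $\mu = 1$), so we assume $n > 0$ from now on.

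Because $\CO_{S,x}$ is a $2$-dimensional regular local ring and $I_x$ has finite colength, $\CO_{S,x}/I_x$ has depth $0$, so the Auslander--Buchsbaum formula gives $\mathrm{pd}_{\CO_{S,x}}(\CO_{S,x}/I_x) = 2$, and $I_x$ is a perfect ideal of grade $2$. The Hilbert--Burch theorem then produces a minimal free resolution
\[
0 \longrightarrow \CO_{S,x}^{\mu - 1} \xrightarrow{\;A\;} \CO_{S,x}^{\mu} \longrightarrow I_x \longrightarrow 0,
\]
in which every entry of the $\mu \times (\mu - 1)$ matrix $A$ lies in $\fm_x$ (by minimality of the resolution), and $I_x$ is generated, up to a unit, by the $(\mu - 1) \times (\mu - 1)$ minors of $A$. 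Each such minor is a signed sum of products of $\mu - 1$ elements of $\fm_x$, so $I_x \subseteq \fm_x^{\mu - 1}$. Taking colengths then yields
\[
n \;=\; \dim_{\BC}\CO_{S,x}/I_x \;\geq\; \dim_{\BC}\CO_{S,x}/\fm_x^{\mu - 1} \;=\; \binom{\mu}{2},
\]
and solving this inequality for $\mu$ produces exactly the bound \eqref{eqn:generators}.

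The small-$n$ values then follow by direct inspection. For $n = 1$ the bound forces $\mu = 2$, and indeed $I_x = \fm_x$ is the only colength-$1$ ideal. For $n = 2$ the bound gives $\mu \leq 2$, while $\mu = 1$ is excluded because any principal ideal in the $2$-dimensional domain $\CO_{S,x}$ has infinite colength. For $n = 3$ the bound permits $\mu \in \{2, 3\}$, and $\mu = 3$ corresponds to the equality case in the displayed inequality, which forces $I_x = \fm_x^{2}$; this is a single point inside the $2$-dimensional punctual Hilbert scheme of colength-$3$ ideals at $x$ (the dimension being Brian\c{c}on's theorem), and therefore defines a locus of positive codimension.

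I anticipate no serious obstacle. The only points requiring a little attention are the verification of the projective-dimension and grade hypotheses of Hilbert--Burch (standard Auslander--Buchsbaum) and the use of minimality of the resolution to place the entries of $A$ into $\fm_x$. Once these are in place, the containment $I_x \subseteq \fm_x^{\mu - 1}$ and the length comparison $\binom{\mu}{2}$ are immediate, and sharpness of Hilbert--Burch in the equality case delivers the codimension claim in \eqref{eqn:generators 3}.
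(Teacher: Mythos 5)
Your proof is correct, but it takes a genuinely different route from the paper's. The paper works on $\BA^2$, invokes upper semicontinuity of $\dim_{\BC}\Hom(I,\BC_x)$ under the torus action to reduce to monomial ideals $I_\lambda$, and then reads off the count as the number of distinct parts of $\lambda$, which gives the same numerical inequality $n \geq \binom{\mu}{2}$ that you obtain; the small cases are handled there by observing that curvilinear ideals are dense. Your argument replaces the degeneration by pure local commutative algebra: the identification $\dim_{\BC}\Hom(I,\BC_x) = \mu(I_x)$ via Nakayama, then Hilbert--Burch (with the grade and projective-dimension hypotheses correctly supplied by Auslander--Buchsbaum) to get $I_x \subseteq \fm_x^{\mu-1}$ from the minimality of the resolution, and finally the length comparison with $\CO_{S,x}/\fm_x^{\mu-1}$. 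This buys you something the paper's proof does not state explicitly: a rigid description of the equality case, namely $I_x = \fm_x^{\mu-1}$, which makes your treatment of \eqref{eqn:generators 3} cleaner --- the value $2$ occurs exactly at the single point $\fm_x^2$ of the irreducible two-dimensional punctual Hilbert scheme, rather than via a genericity argument for curvilinear ideals. Conversely, the paper's torus-fixed-point computation is more explicit about which values of $\mu$ actually occur on each stratum. Both proofs are complete; yours requires no semicontinuity and no reduction to the affine plane beyond localization at $x$.
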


\begin{proof} \emph{of Proposition \ref{prop:generators}:} The problem is purely local, so we may assume that $S = \BA^2$ and $x = (0,0)$. In this case, the torus action $T = \BC^* \times \BC^* \curvearrowright \BA^2$ extends to the projective variety $\Hilb^\bullet_n \subset \Hilb_n$ parametrizing length $n$ subschemes of $\BA^2$ supported at $(0,0)$. It is well-known that the $T$--fixed points of this action are:
$$
I_\lambda = (x^{\lambda_1}, x^{\lambda_2}y,...,x^{\lambda_t} y^{t-1}) \subset \BC[x,y]
$$
as $\lambda = (\lambda_1 \geq \lambda_2 \geq ... \geq \lambda_t)$ goes over all partitions of $n$. Because the dimension \eqref{eqn:generators} is upper semicontinuous in $I$, it is enough to prove \eqref{eqn:generators} when $I = I_\lambda$ for some partition $\lambda$. In this case, it is easy to see that:
$$
\dim_{\BC} \Hom(I_\lambda,\BC_x) - 1 = \# \text{ number of different parts of } \lambda
$$
If we assume that $\lambda$ consists of the distinct natural numbers $n_1,...,n_s$ with multiplicities $m_1,...,m_s \in \BN$, then the inequality \eqref{eqn:generators} is a consequence of:
$$
s = \sqrt{2 (1+2+...+s) + \frac 14} - \frac 12 \leq \sqrt{2(n_1m_1+...+n_sm_s) + \frac 14} - \frac 12 =  \sqrt{2n + \frac 14} - \frac 12
$$
Formula \eqref{eqn:generators 0} is trivial. To establish \eqref{eqn:generators 1}--\eqref{eqn:generators 3}, one observes that for $n \in \{1,2\}$ all ideals $I \in \Hilb_n$ are curvilinear near $x$, i.e. contain the ideal $J$ of a smooth curve. When $n=3$ almost all ideals $I \in \Hilb_n$ are curvilinear near $x$, except the square of the maximal ideal of the closed point $x$, which leads to the value 2 in \eqref{eqn:generators 3}. It is easy to prove that an ideal $I$ which is curvilinear near a point $x \in S$ has the property that $\dim_{\BC} \Hom(I,\BC_x) - 1 = 1$, and this implies \eqref{eqn:generators 1}--\eqref{eqn:generators 3}. In general, on the irreducible variety $\Hilb_n^\bullet$, curvilinear ideals form a dense open set.
	
\end{proof}

\subsection{}

The following is our main geometric computation (see \cite{W surf} for an analogous version in the context of the $K$--theory of moduli spaces of stable sheaves): \\

\begin{lemma}
\label{lem:main}

Consider the schemes $\eHilb_{n,n+1} = \{(I_0 \subset I_1)\}$ as well as:
\begin{equation}
\label{eqn:var 1}
\eHilb_{n,n+1} \underset{\eHilb_{n} \times S}{\times} \eHilb_{n,n+1} = \{(I_0 \subset I_1 \supset I_0')\}
\end{equation}
\begin{equation}
\label{eqn:var 2}
\eHilb_{n-1,n,n+1} \underset{\eHilb_{n-1,n}}{\times} \eHilb_{n-1,n,n+1} = \{(I_0 \subset I_1 \supset I_0', I_1 \subset I_2)\}
\end{equation}
where all the inclusions are required to be supported at the same closed point, henceforth denoted by $x$, which is allowed to vary over $S$. We have the natural maps:
$$
\xymatrix{\eHilb_{n,n+1} \ar[d]^-\delta \\ \eHilb_{n,n+1} \underset{\eHilb_{n} \times S}{\times} \eHilb_{n,n+1}} 
$$
given by $\delta(I_0 \subset I_1) = (I_0 \subset I_1 \supset I_0)$, and:
$$
\xymatrix{\eHilb_{n-1,n,n+1} \underset{\eHilb_{n-1,n}}{\times} \eHilb_{n-1,n,n+1} \ar[d]^-\e \\ \eHilb_{n,n+1} \underset{\eHilb_{n} \times S}{\times} \eHilb_{n,n+1}}
$$
given by forgetting $I_2$. Then we have the formulas:
\begin{equation}
\label{eqn:formula 1}
\e_*(1) + \delta_*(1) = 1
\end{equation}
\begin{equation}
\label{eqn:formula 2}
\e_*(l_2) + \delta_*(l) =  - t \cdot \e_*(1) + l_1 + l_1'
\end{equation}
where $l_1, l_1', l_2$ are the first Chern classes of the line bundles which keep track of the quotients denoted by $I_1/I_0$, $I_1/I_0'$, $I_2/I_1$ in the diagrams above, and $l = l_1|_\delta = l_2|_\delta$. The class $t$ denotes $c_1(\CK_S)$, pulled back from $A^*(S)$ to the Chow groups of the various moduli spaces above via the map that remembers the support point $x$. \\

\end{lemma}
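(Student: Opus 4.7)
The plan is to analyze the fiber product $Y:=\Hilb_{n,n+1}\underset{\Hilb_n\times S}{\times}\Hilb_{n,n+1}$ by exploiting the projective-bundle realizations of Propositions \ref{prop:1 minus} and \ref{prop:2 minus}. Specifically, I would embed
$$Y\hookrightarrow\BP_{\Hilb_n\times S}(\CV)\underset{\Hilb_n\times S}{\times}\BP_{\Hilb_n\times S}(\CV)$$
as the vanishing locus of two copies of the Koszul-regular section $\sigma_-$ from Proposition \ref{prop:1 minus}, and similarly embed $Z=\Hilb_{n-1,n,n+1}\times_{\Hilb_{n-1,n}}\Hilb_{n-1,n,n+1}$ in an analogous product of projective bundles over $\Hilb_{n-1,n}$. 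This ambient description allows fundamental classes and first Chern classes of the tautological line bundles to be computed by standard Segre-class/Chern-class formalism.

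For \eqref{eqn:formula 1}, the key geometric input is that $Y$ has (generically) two irreducible components: the diagonal $\delta(\Hilb_{n,n+1})$ corresponding to $I_0=I_0'$, and an off-diagonal component where $I_0\neq I_0'$. In the off-diagonal case, simplicity of $I_1/I_0\cong\BC_x$ forces $I_0+I_0'=I_1$, hence $I_0\cap I_0'$ has colength $n+2$; a choice of $I_2\supset I_1$ (parametrized by $\BP(\Hom(I_1,\BC_x)^\vee)$) then lifts the off-diagonal component to $Z$ compatibly with $\e$. Showing that both components appear with multiplicity one in the Chow class of $Y$, so that $[Y]=\delta_*(1)+\e_*(1)$, is verified by the regularity of the two Koszul sections together with a local analysis at the boundary locus $I_0=I_0'$ with $x\in\mathrm{supp}(\CO_S/I_1)$.

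For \eqref{eqn:formula 2}, I would refine this analysis by tracking the tautological line bundles through the decomposition. On $\delta(\Hilb_{n,n+1})$ one has $\CL_1=\CL_1'=\CL$, contributing $\delta_*(l)$ to $l_1+l_1'$. The remaining contribution $l_1+l_1'-\delta_*(l)$ supported on the off-diagonal component is identified with $\e_*(l_2)+t\cdot\e_*(1)$ via the excess intersection formula \eqref{eqn:excess} of Proposition \ref{prop:excess}: the excess bundle of the fiber square \eqref{eqn:fiber square} is precisely $\CL_1\otimes\CL_2^{-1}\otimes p_S^*(\CK_S^{-1})$, and its Euler class $l_1-l_2-p_S^*(t)$ is exactly the discrepancy appearing in \eqref{eqn:formula 2}. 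Applying this formula (and its variant with $+$ and $-$ swapped) after base change by $p_-\times p_S$ to land in $A^*(Y)$ should produce the stated identity.

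The main technical obstacle is that $p_-\times p_S:\Hilb_{n,n+1}\to\Hilb_n\times S$ is not flat: its fibers jump in dimension over the universal subscheme $\CZ\subset\Hilb_n\times S$, so the scheme-theoretic fiber product $Y$ may carry excess or non-reduced structure near this locus. Controlling the multiplicities precisely at the interface between the two components --- in particular, verifying that the Koszul-regular intersection of the two sections in the ambient projective bundle yields exactly the cycle $\delta_*(1)+\e_*(1)$ and no hidden excess --- is the delicate step, and is where the techniques of Proposition \ref{prop:excess} (following the analogous $K$-theoretic arguments in \cite{W surf}) are essential.
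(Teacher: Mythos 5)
Your overall skeleton matches the paper's: formula \eqref{eqn:formula 1} is an identity of $(2n+2)$-dimensional cycles on \eqref{eqn:var 1}, whose two top-dimensional components are the images of $\delta$ and $\e$, and formula \eqref{eqn:formula 2} follows from an excess-intersection formula for the rank-one excess bundle $\CL_1\otimes\CL_2^{-1}\otimes p_S^*(\CK_S^{-1})$. (The paper applies that formula to the square with vertical arrows $\e$ and $\pi_+$ and horizontal arrows the projections onto the right-hand Cartesian factors, then combines $\pi_{+*}(1)=-l$ from \eqref{eqn:segre 4} with the projection formula and with \eqref{eqn:formula 1} itself --- essentially the route you sketch.)

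However, there is a genuine gap at exactly the point you flag as ``the delicate step'': you never supply the argument that makes \eqref{eqn:formula 1} true. Regularity of $\sigma_-$ on each projective bundle separately does not imply that the concatenated section cutting out \eqref{eqn:var 1} inside $\BP_{\Hilb_n\times S}(\CV)\times_{\Hilb_n\times S}\BP_{\Hilb_n\times S}(\CV)$ is regular; that regularity is \emph{equivalent} to \eqref{eqn:var 1} having pure dimension $2n+2$, which is the actual content to be proved. The paper establishes this by stratifying \eqref{eqn:var 1} according to the defect of $I_1$ at $x$ and invoking the estimate $\dim_{\BC}\Hom(I,\BC_x)-1\le\sqrt{2d+\tfrac 14}-\tfrac 12$ of Proposition \ref{prop:generators}, which forces every stratum of defect $\ge 2$ to have dimension $<2n+2$; only then does criterion \eqref{eqn:equality} apply, making the fiber square derived, the scheme l.c.i.\ with exactly two components (the closures of the defect-$0$ and defect-$1$ loci, which agree with the closures of your diagonal and off-diagonal loci) each of multiplicity one. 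The same stratification is needed to show that \eqref{eqn:var 2} is irreducible of dimension $2n+2$ and that its square over $\Hilb_{n-1,n}$ is derived --- facts your proposal does not address but which are required both for $\e_*(1)$ to equal the class of the second component and for the excess-intersection step giving \eqref{eqn:formula 2}. A smaller inaccuracy: the lift of a point of the second component to \eqref{eqn:var 2} is not a choice in $\BP(\Hom(I_1,\BC_x)^\vee)$ --- that projective space parametrizes the ideals $I_0\subset I_1$, not the ideals $I_2\supset I_1$; for a defect-one $I_1$ the ideal $I_2$ is unique, and this uniqueness is precisely what makes $\e$ birational onto the second component and hence $\e_*(1)$ equal to its class with coefficient one. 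Without these dimension and degree estimates the proposal does not close.
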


\begin{proof} As we have observed in Proposition \ref{prop:smooth 1}, $\Hilb_{n,n+1}$ is smooth of dimension $2n+2$. It is also connected, as the natural map $p_- \times p_S : \Hilb_{n,n+1} \rightarrow \Hilb_n \times S$ has all geometric fibers isomorphic to projective spaces, see Proposition \ref{prop:1 minus}. We will prove that the variety \eqref{eqn:var 1} has dimension $2n+2$, and has two irreducible components of top dimension, by stratifying it according to the defect of the ideal $I_1$ at the point $x$: \\

\begin{enumerate}

\item if $I_1$ is locally free at the point $x$ (i.e. has defect 0), then: \\

\begin{itemize}

\item $I_1$ contributes $2n$ to the dimension \\

\item $x$ contributes $2$ to the dimension \\

\item $I_0$, $I_0'$ each contributes $\dim \Hom(I_1,\BC_x) - 1 = 0$ to the dimension \\ 

\end{itemize}

\item if $I_1$ has defect $d>0$ at the point $x$, then: \\

\begin{itemize}

\item $I_1$ contributes $2n-1-d$ to the dimension, by \eqref{eqn:codim} \\

\item $x$ contributes $2$ to the dimension \\

\item $I_0$, $I_0'$ each contributes $\dim \Hom(I_1,\BC_x) - 1$ to the dimension \\ 

\end{itemize}

\end{enumerate}

\noindent The stratum (1) has dimension $2n+2$. Similarly, the dimension of stratum (2) when $d=1$ is also $2n+2$. When $d=2$, Proposition \ref{prop:generators} implies that the dimension of stratum (2) is:
$$
2n - 1 + 2 \cdot 1 < 2n+2
$$
while if $d = 3$ its dimension is:
$$
2n - 2 - (0 \text{ or }\underline{1}) + 2 \cdot (1 \text{ or }2) < 2n + 2
$$
The explanation for the word ``or" is that, as we have seen in \eqref{eqn:generators 3}, a colength 3 ideal $I_1$ having $\dim_{\BC} \Hom(I_1,\BC_x) - 1 = 2$ is a positive codimension property, hence the underlined $-1$ appearing in the left-hand side above. Finally, if $d \geq 4$, the dimension of stratum (2) may be estimated using \eqref{eqn:generators}:
$$
\leq 2n - 1 - d + 2 + 2 \left( \sqrt{2d+ \frac 14} - \frac 12 \right) < 2n + 2
$$
We conclude that \eqref{eqn:var 1} has dimension $2n+2$, with two irreducible components of top dimension: one is the closure of the locus where $I_1$ has no defect at $x$, and the other is the closure of the locus where $I_1$ has defect 1 at $x$. Therefore, the square:
\begin{equation}
\label{eqn:fiber square 1}
\xymatrix{& \text{variety } \eqref{eqn:var 1} \ar[ld] \ar[rd] & \\ \Hilb_{n,n+1} \ar[rd] & & \Hilb_{n,n+1} \ar[ld] \\
& \Hilb_{n} \times S&}
\end{equation}
consists only of varieties of dimension $2n+2$. Since the maps on the bottom are local complete intersection morphisms (see Proposition \ref{prop:1 minus}), we conclude that \eqref{eqn:equality} applies and the fiber square \eqref{eqn:fiber square 1} is derived. Therefore, the variety \eqref{eqn:var 1} is l.c.i., hence only has two irreducible components. Similarly, we claim the variety \eqref{eqn:var 2} is irreducible of dimension $2n+2$. Indeed, we can show that its dimension is $\leq 2n + 2$ by considering the stratification according to the defect of the ideal $I_2$ at the point $x$: \\

\begin{enumerate}

\item if $I_2$ is locally free at the point $x$, then: \\

\begin{itemize}

\item $I_2$ contributes $2n-2$ to the dimension \\

\item $x$ contributes $2$ to the dimension \\

\item $I_1$ contributes $\dim \Hom(I_2,\BC_x) - 1 = 0$ to the dimension \\

\item $I_0$, $I_0'$ each contributes $\dim \Hom(I_1,\BC_x) - 1 = 1$ to the dimension \\

\end{itemize}

\item if $I_2$ has colength $d > 0$ at the point $x$, then: \\

\begin{itemize}

\item $I_2$ contributes $2n-3-d$ to the dimension, by \eqref{eqn:codim}  \\

\item $x$ contributes $2$ to the dimension \\

\item $I_1$ contributes $\dim \Hom(I_2,\BC_x) - 1$ to the dimension \\

\item $I_0$, $I_0'$ each contributes $\dim \Hom(I_1,\BC_x) - 1$ to the dimension \\

\end{itemize}

\end{enumerate}

\noindent The dimension of stratum (1) is precisely $2n+2$, and it clearly has a single irreducible component of this dimension. In case (2), we may use Proposition \ref{prop:generators} to obtain that the dimension of the stratum with $d=1$ is:
$$
\leq 2n - 2 + 1 + 2 < 2n + 2
$$
while the dimension of the stratum with $d=2$ is:
$$
\leq 2n - 3 + (0 \text{ or }1) + 2\cdot (2 \text{ or } 1) < 2n + 2
$$
(the explanation for the ``or" is that a colength 3 ideal $I_1$ having $\dim_{\BC}\Hom(I_1,\BC) - 1 = 2$ is a positive codimension property). Finally, the dimension of the stratum (2) with $d \geq 3$ is:
$$
\leq 2n - 1 - d + \underline{\sqrt{2d + \frac 14} - \frac 12} - \boxed{1} + 2 \left( \sqrt{2(d+1) + \frac 14} - \frac 12 \right) < 2n + 2
$$
(the boxed $1$ was subtracted because on the dense open locus of ideals $I_2$ which are curvilinear at $x$, we may replace the underlined term with $1$ in the formula above). Consider the fiber square:
\begin{equation}
\label{eqn:fiber square 2}
\xymatrix{& \text{variety } \eqref{eqn:var 2} \ar[ld] \ar[rd] & \\ \Hilb_{n-1,n,n+1} \ar[rd] & & \Hilb_{n-1,n,n+1} \ar[ld] \\
& \Hilb_{n-1,n}&}
\end{equation}
The dimension of the four spaces in the diagram above are, from top to bottom, $2n+2$, $2n+1$, $2n+1$, $2n$. We claim that the fiber square above is derived, which follows from equality \eqref{eqn:equality} applied to the square \eqref{eqn:fiber square 2}, and the bottom-most maps being l.c.i. morphisms, due to Proposition \ref{prop:2 minus}. We conclude that the variety \eqref{eqn:var 2} is a local complete intersection, hence irreducible of dimension $2n+2$. \\

\noindent Formula \eqref{eqn:formula 1} is simply an equality on the top dimensional irreducible components, and it follows from the fact that the two irreducible components of \eqref{eqn:var 1} are each mapped onto by $\Hilb_{n,n+1}$ and the variety \eqref{eqn:var 2}, respectively, under the maps $\delta$ and $\e$, respectively. As for formula \eqref{eqn:formula 2}, let us consider the fiber square:
$$
\xymatrix{\text{variety \eqref{eqn:var 2}} \ar[d]_\e \ar[r]^a & \Hilb_{n-1,n,n+1} \ar[d]^{\pi_+} \\
\text{variety \eqref{eqn:var 1}} \ar[r]^b & \Hilb_{n,n+1}}
$$
where $a$ and $b$ denote the projections onto the right Cartesian product factor in \eqref{eqn:var 1} and \eqref{eqn:var 2}. Because the dimensions of the varieties above are $2n+2$, except for that of $\Hilb_{n-1,n,n+1}$ which is $2n+1$, the excess intersection formula (as in Proposition \ref{prop:excess}) implies the following equality of morphisms:
$$
\e_* \circ \left( (l_1-l_2-t) \cdot a^* \right) = b^* \circ \pi_{+*}
$$
Applying this equality to the fundamental class gives us:
\begin{equation}
\label{eqn:ja 1}
\e_*(l_2) = \e_*(l_1-t) - b^*(\pi_{+*}(1))
\end{equation}
Because the divisor class $l_1-t$ is pulled back from the variety \eqref{eqn:var 1}, we have: 
\begin{multline}
\e_*(l_1-t) = (l_1-t)\e_*(1) \stackrel{\eqref{eqn:formula 1}}= \\ = l_1(1-\delta_*(1)) - t\e_*(1) = l_1 - \delta_*(l) - t\e_*(1) \label{eqn:ja 2}
\end{multline}
Moreover, as a consequence of \eqref{eqn:segre 4}, we have $\pi_{+*}(1) = - l$, and therefore:
\begin{equation}
\label{eqn:ja 3}
b^*(\pi_{+*}(1)) = b^*(-l) = -l_1'
\end{equation}
Formulas \eqref{eqn:ja 1}, \eqref{eqn:ja 2} and \eqref{eqn:ja 3} yield:
$$
\e_*(l_2) = l_1 - \delta_*(l) - t \e_*(1) + l_1'
$$
which proves \eqref{eqn:formula 2}. 

\end{proof} 

\subsection{}

We will now use the computations in Lemma \ref{lem:main} to obtain certain equalities between the Nakajima operators $\fq_k$ of \eqref{eqn:nakajima}, thus leading to Theorem \ref{thm:lehn}. However, the correspondence \eqref{eqn:diagram zk} has the disadvantage that it is rather badly behaved, and it is hard to use it in order to explicitly compute the operators $\fq_k$ in terms of tautological classes. Therefore, we find it more convenient to factor the Nakajima operators in terms of the nested Hilbert schemes of Subsection \ref{sub:nested}: \\

\begin{theorem}
\label{thm:nakajima}

Consider the operators:
\begin{align*}
&e_\uparrow : \bigoplus_{n=0}^\infty A^*(\eHilb_n) \longrightarrow \bigoplus_{n=0}^\infty A^*(\eHilb_{n,n+1}), & e_\uparrow = p_-^* \\
&e_\downarrow : \bigoplus_{n=0}^\infty A^*(\eHilb_{n,n+1}) \longrightarrow \bigoplus_{n=0}^\infty A^*(\eHilb_{n+1} \times S), & e_\downarrow = (p_+ \times p_S)_* \\
&e_\rightarrow : \bigoplus_{n=1}^\infty A^*(\eHilb_{n-1,n}) \longrightarrow \bigoplus_{n=0}^\infty A^*(\eHilb_{n,n+1}), & e_{\rightarrow} = \pi_{+*} \pi_-^* \\
&f_\uparrow : \bigoplus_{n=0}^\infty A^*(\eHilb_{n}) \longrightarrow \bigoplus_{n=0}^\infty A^*(\eHilb_{n-1,n}), & f_\uparrow = p_+^* \\
&f_\downarrow : \bigoplus_{n=0}^\infty A^*(\eHilb_{n,n+1}) \longrightarrow \bigoplus_{n=0}^\infty A^*(\eHilb_{n} \times S), & f_\downarrow = - (p_- \times p_S)_* \\
&f_\leftarrow : \bigoplus_{n=0}^\infty A^*(\eHilb_{n,n+1}) \longrightarrow \bigoplus_{n=1}^\infty A^*(\eHilb_{n-1,n}), & f_{\leftarrow} = - \pi_{-*} \pi_+^*
\end{align*}
with the maps $p_\pm$ and $\pi_\pm$ as in \eqref{eqn:diagram z1} and \eqref{eqn:diagram z2}. Then we have:
\begin{align}
\fq_k &= e_\downarrow \circ \underbrace{e_\rightarrow \circ ... \circ e_\rightarrow}_{k-1 \text{ operators}} \circ e^\uparrow \label{eqn:factor alpha 1} \\
\fq_{-k} & = f_\downarrow \circ \underbrace{f_\leftarrow \circ ... \circ f_\leftarrow}_{k-1 \text{ operators}} \circ f^\uparrow \label{eqn:factor alpha 2}
\end{align}

\end{theorem}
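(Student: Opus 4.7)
The plan is to introduce the iterated flag Hilbert scheme
\begin{equation*}
F_{d,k} = \big\{(I_0 \supset_x I_1 \supset_x \cdots \supset_x I_k) : I_j \in \Hilb_{d+j}(S), \text{ with common support point } x \in S \big\}
\end{equation*}
with natural forgetful maps $\rho_- : F_{d,k} \to \Hilb_d$, $\rho_+ : F_{d,k} \to \Hilb_{d+k}$, and $\rho_S : F_{d,k} \to S$. I will identify both sides of \eqref{eqn:factor alpha 1} with the pull-push operator $(\rho_+ \times \rho_S)_* \circ \rho_-^*$ through $F_{d,k}$: an iterated base-change argument handles the left-hand side, while a birational comparison with $\Hilb_{d,d+k}$ matches this with $\fq_k$.

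For the base-change step, I would proceed by induction on $k$ using the Cartesian square
\begin{equation*}
\xymatrix{F_{d, k+1} \ar[r] \ar[d] & \Hilb_{d+k-1, d+k, d+k+1} \ar[d]^{\pi_-} \\ F_{d, k} \ar[r] & \Hilb_{d+k-1, d+k}}
\end{equation*}
whose bottom map sends $(I_0, \ldots, I_k)$ to $(I_{k-1}, I_k)$ and whose top map sends $(I_0, \ldots, I_{k+1})$ to $(I_{k-1}, I_k, I_{k+1})$; Cartesianness is immediate since a compatible quotient $I_{k-1}/I_k$ forces the support points of the two flags to agree. Because $\pi_-$ is an l.c.i.\ morphism by Proposition \ref{prop:2 minus}, refined Chow-theoretic base change applies, and induction gives $e_\rightarrow^{k-1} \circ e_\uparrow = \bar\rho_{+*} \circ \rho_-^*$ where $\bar\rho_+ : F_{d,k} \to \Hilb_{d+k-1, d+k}$ reads off the last two ideals. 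Composing with $e_\downarrow = (p_+ \times p_S)_*$ yields $e_\downarrow \circ e_\rightarrow^{k-1} \circ e_\uparrow = (\rho_+ \times \rho_S)_* \circ \rho_-^*$.

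For the identification with $\fq_k$, I would compare through the natural forgetful map $\phi : F_{d,k} \to \Hilb_{d,d+k}$ given by $(I_0, \ldots, I_k) \mapsto (I_0, I_k)$. Both schemes have dimension $2d+k+1$, and $\phi$ is generically one-to-one: over the dense open stratum of $\Hilb_{d, d+k}$ where the quotient $I_0/I_k$ is curvilinear at its support point, the only composition series of $\BC[t]/(t^k)$ is the obvious chain $0 \subset (t^{k-1}) \subset \cdots \subset (t) \subset \BC[t]/(t^k)$. Consequently $\phi_* \circ \phi^* = \Id$ and $(\rho_+ \times \rho_S)_* \circ \rho_-^* = (p_+ \times p_S)_* \circ p_-^* = \fq_k$. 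The analog \eqref{eqn:factor alpha 2} is proved identically with the roles of $\pi_-/\pi_+$ and $\rho_-/\rho_+$ swapped, the sign $(-1)^k$ in \eqref{eqn:nak exp 2} being collected from one minus sign in $f_\downarrow$ and $k-1$ minus signs in the copies of $f_\leftarrow$.

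The main obstacle is ensuring that $F_{d,k}$ has no extraneous top-dimensional components, so that $\phi_*[F_{d,k}] = [\Hilb_{d,d+k}]$ holds literally. This requires stratifying $F_{d,k}$ by the defect of $I_0$ at the common support point and bounding the dimensions of positive-defect strata by the same techniques used in Proposition \ref{prop:generators} and Lemma \ref{lem:main}, showing that only the curvilinear stratum contributes at the top dimension and that it indeed maps birationally onto $\Hilb_{d,d+k}$.
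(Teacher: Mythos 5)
Your proposal is correct, and its core is the same as the paper's: both arguments reduce \eqref{eqn:factor alpha 1} to the statement that the composite correspondence is a top-dimensional cycle supported on $\Hilb_{n,n+k}$, invoke the fact (from \cite{Nak}) that this variety has a unique irreducible component of top dimension $2n+k+1$, and compute the multiplicity at a generic curvilinear point, where the completion of $(I \supset I')$ to a full flag is unique because the only composition series of $\CO_C/\fm_x^k$ on a smooth curve $C$ is given by the powers of $\fm_x$. The one genuine difference is in the packaging, and it affects how much work is left over. By routing the argument through the explicit flag scheme $F_{d,k}$ and requiring each base-change square to be derived, you oblige yourself to prove $\dim F_{d,k} = 2d+k+1$ (your ``main obstacle''), since otherwise the refined Gysin class $\pi_-^![F_{d,k}]$ need not equal the fundamental class $[F_{d,k+1}]$ and excess terms would appear. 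The paper's formulation sidesteps this: the composition $e_\downarrow \circ e_\rightarrow \circ \cdots \circ e_\rightarrow \circ e_\uparrow$, viewed as a composition of correspondences, is by the general formalism a Chow class of the expected dimension $2n+k+1$ whose support is automatically contained in $\Hilb_{n,n+k}$ (all length-one subquotients in a chain are forced to sit at the same point of $S$), regardless of whether the intermediate set-theoretic fiber products have excess dimension. So the only inputs are Nakajima's irreducibility statement and the multiplicity-one computation on the curvilinear locus, and the defect stratification of $F_{d,k}$ can be dispensed with entirely. Two minor points: your sign count for \eqref{eqn:factor alpha 2} is right, though the paper obtains that identity by transposing the correspondence for $\fq_k$ rather than rerunning the argument; and $\phi^*$ is not literally defined on Chow groups ($\Hilb_{d,d+k}$ is singular and $\phi$ is not flat), but the statement you actually need, and do state, is the equality of correspondences $\phi_*[F_{d,k}] = [\Hilb_{d,d+k}]$.
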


\begin{proof} We will prove \eqref{eqn:factor alpha 1}, as \eqref{eqn:factor alpha 2} is deduced from it by transposition. The desired formula is an equality of top-dimensional cycles on $\Hilb_{n,n+k}$. Since this variety has a single irreducible component of top dimension (see \cite{Nak}), the formula boils down to proving that for a generic point:
$$
(I, I') \in \Hilb_{n,n+k}
$$
(with $\text{supp } I/I' = \{x\}$) there is a unique way to complete it to a full flag: 
\begin{equation}
\label{eqn:full flag}
I' = I_{n+k} \subset I_{n+k-1} \subset ... \subset I_{n+1} \subset I_n = I
\end{equation}
The reason for this is that the generic point of $\Hilb_{n,n+k}$ is curvilinear, i.e. the quotient $I/I'$ is a quotient of $\CO_C$ for a smooth curve $C \subset S$, and in this case the only choice for the flag \eqref{eqn:full flag} is given by the powers of $\fm_x \subset \CO_C$. 

\end{proof}

\subsection{} 

As a consequence of formulas \eqref{eqn:factor alpha 1} and \eqref{eqn:factor alpha 2}, we have the following result: \\

\begin{proposition}
\label{prop:preserve}

For any $k \in \BZ \backslash 0$ and any $\gamma \in R(S)$, the map $\fq_k(\gamma)$ preserves $A_{\emph{big}}(\eHilb)$. Similarly, for any $k,k' \in \BZ \backslash 0$, the map $\fq_k \fq_{k'}(\Delta)$ preserves $A_{\emph{big}}(\eHilb)$. \\

\end{proposition}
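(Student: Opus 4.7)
The strategy is to factor the Nakajima operators through the nested Hilbert schemes via Theorem \ref{thm:nakajima}, and to show that the elementary pullback and pushforward operators appearing in this factorization preserve a suitable enlargement of the subring of big tautological classes. To set this up, for each nested Hilbert scheme $X$ among $\Hilb_n$, $\Hilb_{n,n+1}$, $\Hilb_{n-1,n,n+1}$ and each $m\ge 0$, introduce the enlarged subspace $\widetilde{A}^*(X\times S^m)\subset A^*(X\times S^m)$ generated by products of (i) Chern characters of universal subschemes pulled back to $X\times S$ via the various natural projections $X\to\Hilb_*$, (ii) the tautological line bundle classes $c_1(\CL), c_1(\CL_1), c_1(\CL_2)$ when applicable, and (iii) pulled-back classes from $R(S^j)$ along the support maps and $S^m$-projections. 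On $X=\Hilb_n$ with $m=0$ this coincides with $A^*_\big(\Hilb_n)$, and base change shows that each pullback $p_\pm^*$ and $\pi_\pm^*$ sends $\widetilde{A}^*$ to $\widetilde{A}^*$ on the target.

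The crucial bridge is the short exact sequence \eqref{eqn:ses} on $\Hilb_{n,n+1}\times S$, which yields the $K$-theoretic identity
$$
[(p_-\times\id)^*\CO_{\CZ_n}] \;=\; [(p_+\times\id)^*\CO_{\CZ_{n+1}}] \;-\; [\CL\otimes(p_S\times\id)^*\CO_\Delta].
$$
Since $\ch(\CO_\Delta)\in R(S\times S)$ for a K3 surface (as a direct GRR computation using $\td(S)=1+2c$ and the Beauville-Voisin relations yields $\ch(\CO_\Delta)=\Delta-2c_1c_2$), this identity converts each Chern character $\ch_k$ of a $p_-$-pulled-back $\CO_{\CZ_n}$ into $\ch_k$ of a $p_+$-pulled-back $\CO_{\CZ_{n+1}}$ plus corrections of the form $c_1(\CL)^a\cdot(p_S\times\id)^*\gamma$ with $\gamma\in R(S\times S)$. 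Analogous identities on $\Hilb_{n-1,n,n+1}\times S$ for the three universal subschemes and the line bundles $\CL_1,\CL_2$ follow from Proposition \ref{prop:2 minus}.

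To analyze the pushforward $(p_+\times p_S)_*$ applied to a class in $\widetilde{A}^*(\Hilb_{n,n+1})$, first use the conversion above to rewrite all $p_-$-pulled-back Chern characters in terms of $p_+$-pulled-back ones plus $\CL$-corrections. The projection formula then passes the $p_+^*$-parts through the pushforward, leaving only powers of $c_1(\CL)$ to be pushed down. The Segre class identities \eqref{eqn:segre 1}--\eqref{eqn:segre 4} compute $(p_\pm\times p_S)_*(c_1(\CL)^a)$ and $\pi_{\pm*}(c_1(\CL)^a)$ as Chern classes of universal ideal sheaves on the target, hence as big tautological classes. Iterating through the factorizations $\fq_k = e_\downarrow\circ e_\rightarrow^{k-1}\circ e_\uparrow$ and its $f$-analogue from Theorem \ref{thm:nakajima}, multiplying by $\pi_2^*\gamma$ with $\gamma\in R(S)$ and pushing forward along $\pi_1$, we conclude that $\fq_{\pm k}(\gamma)$ preserves $A^*_\big(\Hilb)$.

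For $\fq_k\fq_{k'}(\Delta)$, apply the above argument twice: the composition $\fq_k\fq_{k'}$ maps $A^*_\big(\Hilb)$ into the two-variable enlargement $\widetilde{A}^*(\Hilb\times S^2)$ in which the two $S$-factors carry independently pulled-back Chern characters of $\CO_\CZ$. Multiplying by $\pi_2^*\Delta$ restricts to the diagonal $\Hilb\times\Delta\subset\Hilb\times S^2$, identifying the two pulled-back copies of $\CO_\CZ$, so that each Chern character product collapses to a product involving a single $\CO_\CZ$ on $\Hilb\times S$; pushing forward along $\pi_1$ yields a big tautological class on $\Hilb$. The main obstacle is the pushforward step above: verifying that arbitrary products of pulled-back Chern characters, multiplied by powers of $c_1(\CL)$, push forward cleanly to big tautological classes requires careful iterated use of the projection formula together with the Segre-class identities, essentially generalizing Claim \ref{claim} from the case of two Chern characters to arbitrary products.
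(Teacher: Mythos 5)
Your proposal is correct and follows essentially the same route as the paper's proof: factor the Nakajima operators through the nested Hilbert schemes via Theorem \ref{thm:nakajima}, define enlarged tautological subrings on $\Hilb_{n,n+1}$ and $\Hilb_{n,n+1,n+2}$ containing $c_1(\CL)$ and the pulled-back universal classes, use the exact sequence \eqref{eqn:ses} to trade $p_-$-pulled-back Chern characters for $p_+$-pulled-back ones plus $c_1(\CL)$-corrections (with $\ch(\CO_\Delta)$ landing in the Beauville--Voisin ring), and finish with the projection formula and the Segre-class identities \eqref{eqn:segre 1}--\eqref{eqn:segre 4}; the case of $\fq_k\fq_{k'}(\Delta)$ is likewise handled by running the argument on $\Hilb_n\times S\times S$ and restricting to the diagonal. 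The final ``obstacle'' you flag is not really one: after the conversion step only pure powers of $c_1(\CL)$ remain to be pushed forward, so no generalization of Claim \ref{claim} is needed, exactly as in the paper.
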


\noindent Strictly speaking, the notion of big tautological classes was only defined for a K3 surface $S$, but the Proposition above holds for any surface $S$, as long as $R(S)$ that appears in Definition \ref{def:a big} is replaced by a subring of $A^*(S)$ that contains the Chern classes of the tangent bundle. \\

\begin{proof} Let us first prove the statement about $\fq_k(\gamma)$, assuming $k>0$ (the case $k<0$ is analogous). Recall that $A^*_\big(\Hilb_n) \subset A^*(\Hilb_n)$ is the subring generated by:
\begin{equation}
\label{eqn:classes}
\pi_{1*} \left[ \ch_{k_1} \left( \frac {\CO_{\Hilb_n \times S}}{\CI} \right)  ... \ch_{k_t} \left( \frac {\CO_{\Hilb_n \times S}}{\CI} \right)\cdot \pi_2^*(\gamma) \right]
\end{equation}
for any $k_1,...,k_t>0$ and any $\gamma \in R(S) \subset A^*(S)$. In a similar vein, let: 
\begin{equation}
\label{eqn:fo}
A^*_\big(\Hilb_n \times S) \subset A^*(\Hilb_n \times S)
\end{equation}
denote the subring generated by the pull-backs of classes \eqref{eqn:classes} from $\Hilb_n$, the pull-back of classes in $R(S)$ from $S$, and the Chern character of $\CI$ itself. Let:
$$
A^*_\big(\Hilb_{n,n+1}) \subset A^*(\Hilb_{n,n+1})
$$
denote the subring generated by $c_1(\CL)$, the classes $p_S^*(R(S))$, the pull-backs of classes \eqref{eqn:classes} from either $\Hilb_n$ or $\Hilb_{n+1}$, and arbitrary Chern classes of $\Gamma^*(\CI)$ and $\Gamma^*(\CI')$ (where $\CI$ and $\CI'$ are the two tautological ideal sheaves on the space $\Hilb_{n,n+1} \times S$, and $\Gamma : \Hilb_{n,n+1} \rightarrow \Hilb_{n,n+1} \times S$ is the graph of the map $p_S$). Let:
$$
A^*_\big(\Hilb_{n,n+1,n+2}) \subset A^*(\Hilb_{n,n+1,n+2})
$$
be defined analogously, with respect to all possible ideal sheaves on $\Hilb_{n,n+1,n+2}$. To show that $\fq_k(\gamma)$ preserves the ring of big tautological classes, it suffices by \eqref{eqn:factor alpha 1} to show that the maps $p_-^*, \pi_-^*, \pi_{+*}, (p_+ \times p_S)_*$ send $A^*_\big(...)$ to $A^*_\big(...)$. This is obvious for the pull-back maps by the very definitions of the various rings above, so we only need to prove it for the push-forwards. For example, we must show that:
\begin{equation}
\label{eqn:1}
(p_+ \times p_S)_* \left( \prod_{k_i,\gamma} \pi_{1*} \left[ \prod_i \ch_{k_i} \left( \frac {\CO_{\Hilb_{n,n+1} \times S}}{\CI} \right) \cdot \pi_2^*(\gamma) \right] \cdot \prod_{j} \ch_{k'_j}(\Gamma^*(\CI)) \cdot c_1(\CL)^d \right)
\end{equation}
lies in $A^*_\big(\Hilb_{n+1} \times S)$, for any choice of $k_i,k'_j,d,\gamma$ (there is no reason to also include factors where $\CI$ is replaced by $\CI'$ since these are pulled back via $p_+ \times p_S$, and hence pass through the direct image, due to the projection formula). The short exact sequence $0 \rightarrow \CI' \rightarrow \CI \rightarrow \pi_1^*(\CL) \otimes (p_S \times \text{Id})^*(\CO_\Delta) \rightarrow 0 $ on $\Hilb_{n,n+1} \times S$ yields:
$$
\pi_{1*} \left[ \prod_i \ch_{k_i} \left( \frac {\CO_{\Hilb_{n,n+1} \times S}}{\CI} \right) \cdot \pi_2^*(\gamma) \right] =
$$
$$
= \pi_{1*} \left[ \prod_i \left( \ch_{k_i} \left( \frac {\CO_{\Hilb_{n,n+1} \times S}}{\CI'} \right) - \sum_{a+b = k} \frac {\pi_1^*(c_1(\CL))^a}{a!} \cdot (p_S \times \text{Id})^*(\ch_b(\CO_\Delta)) \right) \cdot \pi_2^*(\gamma) \right]
$$
Because the Chern character of $\CO_\Delta$ equals $[\Delta]$ multiplied by a class in $R(S)$, then:
\begin{equation}
\label{eqn:2}
\pi_{1*} \left[ \prod_i \ch_{k_i} \left( \frac {\CO_{\Hilb_{n,n+1} \times S}}{\CI} \right) \cdot \pi_2^*(\gamma) \right] =
\end{equation} 
$$
= \text{sum of expressions of the form } c_1(\CL)^{a'} \cdot \pi_{1*} \left[ \prod_i \left( \ch_{k'_i} \left( \frac {\CO_{\Hilb_{n,n+1} \times S}}{\CI'} \right) \right) \cdot \pi_2^*(\gamma') \right]
$$
for various $a',k_i' \in \BN, \gamma' \in R(S)$ (above, we used $\pi_{1*}(p_S \times \text{Id})^*([\Delta]) = 1$). The right-hand side of \eqref{eqn:2} lies in $A^*_\big(\Hilb_{n,n+1})$, as expected. Similarly, we have:
\begin{equation}
\label{eqn:3}
\ch_{k'}(\Gamma^*(\CI)) = \ch_{k'}(\Gamma^*(\CI')) + \ch_{k'}(\CL \otimes (p_S)^*(\CO_\Delta|_\Delta)) = 
\end{equation}
$$
= \ch_{k'}(\Gamma^*(\CI')) + \text{sum of expressions of the form } c_1(\CL)^{a'} \cdot p_S^*(\gamma')
$$
for various $a' \in \BN$, $\gamma' \in R(S)$. Using formulas \eqref{eqn:2} and \eqref{eqn:3}, one may write \eqref{eqn:1} as a sum of products of big tautological classes on $\Hilb_{n+1} \times S$, times:
$$
(p_+ \times p_S)_*(c_1(\CL)^{d}) \stackrel{\eqref{eqn:segre 2}}= (-1)^{d} \ch_{d+2}(\CI) \qquad \text{for various } d \in \BN
$$
Therefore, we conclude that \eqref{eqn:1} is a big tautological class, hence $(p_+ \times p_S)_*$ maps $A^*_\big(...)$ to $A^*_\big(...)$. The computation that shows that $\pi_{+*}$ maps $A^*_\big(...)$ to $A^*_\big(...)$ is analogous, so we leave it as an exercise to the interested reader. \\

\noindent Let us now prove the statement about $\fq_k\fq_{k'}(\Delta)$, assuming $k,k' > 0$ (the cases when $k$ or $k'$ are negative are analogous). By \eqref{eqn:factor alpha 1}, the operator $\fq_k \fq_{k'}(\Delta)$ is given by:
$$
\xymatrixcolsep{4pc}\xymatrix{A^*(\Hilb_{n-k-k'}) \ar[r]^-{e_\downarrow \circ (e_\rightarrow)^{k'-1} \circ e^\uparrow} & A^*(\Hilb_{n-k} \times S) \ar[r]^-{e_\downarrow \circ (e_\rightarrow)^{k-1} \circ e^\uparrow} & A^*(\Hilb_{n} \times S \times S) \ar[dll]_-{\text{Id}_{\Hilb_n} \times \Delta^*} \\ A^*(\Hilb_{n} \times S) \ar[rr]^{\pi_{1*}} & & A^*(\Hilb_{n})}
$$ 
Repeating the argument for $\fq_k(\gamma)$ from the previous paragraphs shows that applying the top-most two maps in the display above to any big tautological class takes it to a sum of products of the following types of classes on $\Hilb_{n} \times S \times S$: \\

\begin{itemize}
	
\item pull-backs of classes \eqref{eqn:classes} from $A^*(\Hilb_{n})$ \\

\item pull-backs of classes in $R(S \times S) \subset A^*(S \times S)$ \\

\item the Chern classes of the universal ideal sheaves $\CI_1$ and $\CI_2$, which are pulled back from either of the two projections $\Hilb_n \times S  \times S \rightarrow \Hilb_n \times S$ \\

\end{itemize}

\noindent When we restrict the classes above to the diagonal $\Delta : S \hookrightarrow S \times S$, we simply obtain a big tautological class on $\Hilb_n \times S$, as defined in \eqref{eqn:fo}. Pushing forward such a class to $\Hilb_n$ via the first projection lands in the subring generated by big tautological classes (by the very definition of the latter), as was needed to prove.

\end{proof}

\subsection{}

Recall the tautological line bundle $\CL$ on $\Hilb_{n,n+1}$, and the operator of multiplication by its first Chern class:
\begin{equation}
\label{eqn:tau}
\fr : \bigoplus_{n=0}^\infty A^*(\Hilb_{n,n+1}) \xrightarrow{\cdot c_1(\CL)} \bigoplus_{n=0}^\infty A^*(\Hilb_{n,n+1})
\end{equation}
Consider the following operators, analogous to those of Theorem \ref{thm:nakajima} (the maps $p_{\pm}, p_S, \pi_\pm$ were defined in \eqref{eqn:diagram z1} and \eqref{eqn:diagram z2}):
\begin{align*}
&e_\downarrow^{(1)} : \bigoplus_{n=0}^\infty A^*(\Hilb_{n,n+1}) \longrightarrow \bigoplus_{n=0}^\infty A^*(\Hilb_{n+1} \times S), \qquad e_\downarrow^{(1)} = (p_+ \times p_S)_* \circ \fr \\
&e_\rightarrow^{(1)} : \bigoplus_{n=0}^\infty A^*(\Hilb_{n,n+1}) \longrightarrow \bigoplus_{n=0}^\infty A^*(\Hilb_{n+1,n+2}), \ \qquad e_\rightarrow^{(1)} = \pi_{+*} \pi^*_- \circ \fr
\end{align*}
as well as the analogous notation for the $f$ operators. In the following formulas, we will often refer to $t$ as a class on $\Hilb_{n,n+1}$, explicitly given by $p_S^*(c_1(\CK_S))$. \\

\begin{proposition}
\label{prop:aux 1}

We have the following equalities:
\begin{equation}
\label{eqn:aux 1}
\pi_{+*} \pi_-^* \pi_{-*} \pi_+^* + \emph{Id} = (p_- \times p_S)^* \circ (p_- \times p_S)_* 
\end{equation}
\begin{multline}
\pi_{+*} \pi_-^* \circ \fr \circ \pi_{-*} \pi_+^* + \fr = \\
= - p_S^*(t) \cdot \pi_{+*} \pi_-^* \pi_{-*} \pi_+^* +  \fr \circ (p_- \times p_S)^* \circ (p_- \times p_S)_* + (p_- \times p_S)^* \circ (p_- \times p_S)_* \circ \fr \label{eqn:aux 2}
\end{multline}
of operators $\bigoplus_{n=0}^\infty A^*(\eHilb_{n,n+1}) \rightarrow \bigoplus_{n=0}^\infty A^*(\eHilb_{n,n+1})$. \\

\end{proposition}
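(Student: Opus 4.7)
The plan is to reduce both identities to the geometric formulas \eqref{eqn:formula 1} and \eqref{eqn:formula 2} of Lemma \ref{lem:main} via base change along two fiber squares. Write $Y$ for the variety \eqref{eqn:var 1} with its pair of projections $\tilde q_1, \tilde q_2 : Y \to \Hilb_{n,n+1}$ along $p_- \times p_S$, and $X$ for the variety \eqref{eqn:var 2} with its pair of projections $q_1, q_2 : X \to \Hilb_{n-1,n,n+1}$ along $\pi_-$. Since both fiber squares \eqref{eqn:fiber square 1} and \eqref{eqn:fiber square 2} were shown to be derived in the course of proving Lemma \ref{lem:main}, base change gives
$$\pi_-^* \circ \pi_{-*} = q_{1*} \circ q_2^*, \qquad (p_- \times p_S)^* \circ (p_- \times p_S)_* = \tilde q_{1*} \circ \tilde q_2^*.$$
Moreover, direct inspection of the moduli descriptions yields the compatibilities $\pi_+ \circ q_i = \tilde q_i \circ \e$ and $\tilde q_i \circ \delta = \text{Id}_{\Hilb_{n,n+1}}$ for $i = 1,2$.

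For \eqref{eqn:aux 1}, combining these with the projection formula will give $\pi_{+*} \pi_-^* \pi_{-*} \pi_+^* = (\tilde q_1 \e)_* (\tilde q_2 \e)^* = \tilde q_{1*}\bigl(\e_*(1) \cdot \tilde q_2^*(-)\bigr)$. Substituting $\e_*(1) = 1 - \delta_*(1)$ from \eqref{eqn:formula 1}, the first piece becomes $(p_- \times p_S)^*(p_- \times p_S)_*$ by the second base change identity, while the second collapses to $\text{Id}$ since $\tilde q_i \circ \delta = \text{Id}$.

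For \eqref{eqn:aux 2}, I carry out the same maneuver with multiplication by $c_1(\CL_{\Hilb_{n-1,n}})$ inserted in the middle. Its pullback via $\pi_-$ is $l_2$ on $\Hilb_{n-1,n,n+1}$, and this class further pulls back identically through $q_1$ and $q_2$ to a single class on $X$; the projection formula then yields $\pi_{+*} \pi_-^* \circ \fr \circ \pi_{-*} \pi_+^* = \tilde q_{1*}\bigl(\e_*(l_2) \cdot \tilde q_2^*(-)\bigr)$. Substituting $\e_*(l_2) = -\delta_*(l) - t \cdot \e_*(1) + l_1 + l_1'$ from \eqref{eqn:formula 2} produces four terms: the $-\delta_*(l)$ term contracts to $-\fr$ via $\tilde q_i \circ \delta = \text{Id}$; the $l_1 = \tilde q_1^*(l)$ and $l_1' = \tilde q_2^*(l)$ terms produce $\fr \circ (p_- \times p_S)^*(p_- \times p_S)_*$ and $(p_- \times p_S)^*(p_- \times p_S)_* \circ \fr$ respectively by the projection formula along $\tilde q_1$ and $\tilde q_2$; and the $-t \cdot \e_*(1)$ term, using that $t$ pulls back from $S$ and hence factors through the pushforward, contributes $-p_S^*(t) \cdot \pi_{+*} \pi_-^* \pi_{-*} \pi_+^*$ by reusing the computation established for \eqref{eqn:aux 1}.

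The only delicate point is the validity of the two base change identities, which hinges on the derivedness of the squares \eqref{eqn:fiber square 1} and \eqref{eqn:fiber square 2}; this was already handled inside Lemma \ref{lem:main} via the dimension equality \eqref{eqn:equality} combined with the l.c.i.\ structure of $p_-$ and $\pi_-$ provided by Propositions \ref{prop:1 minus} and \ref{prop:2 minus}. Once this input is secured, the remainder is essentially bookkeeping with the projection formula, sorting the four terms of \eqref{eqn:formula 2} into the four operators on the right-hand side of \eqref{eqn:aux 2}.
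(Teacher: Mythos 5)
Your proof is correct and follows exactly the route the paper indicates: it deduces \eqref{eqn:aux 1} and \eqref{eqn:aux 2} from the cycle identities \eqref{eqn:formula 1} and \eqref{eqn:formula 2} by base change along the derived squares \eqref{eqn:fiber square 1} and \eqref{eqn:fiber square 2} together with the projection formula. The paper leaves these details as a remark after the Proposition, and your writeup fills them in faithfully (including the correct matching of the four terms of \eqref{eqn:formula 2} with the four operators in \eqref{eqn:aux 2}).
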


\noindent  Formulas \eqref{eqn:aux 1} and \eqref{eqn:aux 2} are straightforward restatements of the equalities \eqref{eqn:formula 1} and \eqref{eqn:formula 2} of cycles (for the convenience of the reader, the individual summands in \eqref{eqn:aux 1} and \eqref{eqn:aux 2} precisely match the respective summands in \eqref{eqn:formula 1} and \eqref{eqn:formula 2}, in order from left to right). This fact uses base change \eqref{eqn:base change} and the fact that the squares \eqref{eqn:fiber square 1} and \eqref{eqn:fiber square 2} are derived. \\

\begin{proposition}
\label{prop:aux 2}

For any $k \in \BN$, we have the formulas:
\begin{equation}
\label{eqn:aux 3} 
\sum_{i+j = k}^{i,j > 0} \fq_i \fq_j \Big|_\Delta = e^{(1)}_\downarrow \circ \underbrace{e_\rightarrow \circ ... \circ e_\rightarrow}_{k-1 \text{ operators}} \circ e^\uparrow - e_\downarrow \circ \underbrace{e_\rightarrow \circ ... \circ e^{(1)}_\rightarrow}_{k-1 \text{ operators}} \circ e^\uparrow - t(k-1) \fq_k
\end{equation}
and:
\begin{equation}
\label{eqn:aux 4} 
\sum_{i+j = k}^{i,j > 0} \fq_{-i} \fq_{-j} \Big|_\Delta =  f_\downarrow \circ \underbrace{f_\leftarrow \circ ... \circ f^{(1)}_\leftarrow}_{k-1 \text{ operators}} \circ f^\uparrow - f^{(1)}_\downarrow \circ \underbrace{f_\leftarrow \circ ... \circ f_\leftarrow}_{k-1 \text{ operators}} \circ f^\uparrow - t(k-1) \fq_{-k}
\end{equation}
where we recall that $t = c_1(\CK_S) \in A^*(S)$ multiplies $\fq_k : A^*(\eHilb) \rightarrow A^*(\eHilb \times S)$ by multiplying the $S$ factor. \\

\end{proposition}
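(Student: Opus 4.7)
The plan is to recognize the right-hand side of \eqref{eqn:aux 3} as (essentially) a commutator of $\fr$ with the operator $(e_\rightarrow)^{k-1}$, expand it telescopically, and match each term with a single $\fq_i\fq_j|_\Delta$. Observing that $e^{(1)}_\downarrow = e_\downarrow\circ\fr$ and $e^{(1)}_\rightarrow = e_\rightarrow\circ\fr$ (with $\fr$ always acting on the source), the RHS of \eqref{eqn:aux 3} becomes
\[
e_\downarrow \circ \bigl( \fr \circ (e_\rightarrow)^{k-1} - (e_\rightarrow)^{k-1} \circ \fr \bigr) \circ e^\uparrow - t(k-1)\fq_k,
\]
and the standard telescope gives $\fr (e_\rightarrow)^{k-1} - (e_\rightarrow)^{k-1}\fr = \sum_{j=1}^{k-1}(e_\rightarrow)^{j-1}[\fr,e_\rightarrow](e_\rightarrow)^{k-1-j}$, where $[\fr,e_\rightarrow]:=\fr\circ e_\rightarrow - e_\rightarrow\circ\fr$ involves two copies of $\fr$ living on adjacent nested Hilbert schemes. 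The problem reduces to computing this single commutator.

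Since $\pi_+^*\CL = \CL_1$ and $\pi_-^*\CL = \CL_2$ on $\eHilb_{n-1,n,n+1}$, the projection formula yields $[\fr,e_\rightarrow] = \pi_{+*}\bigl((l_1-l_2)\pi_-^*\bigr)$; combined with the excess intersection of Proposition \ref{prop:excess} and the identity $p_S\circ\pi_\pm=p_S$, this becomes
\[
[\fr,e_\rightarrow] = (p_-\times p_S)^*\circ e_\downarrow + p_S^*(t)\cdot e_\rightarrow .
\]
Substituting splits the telescope into two sums. The $p_S^*(t)$-piece is handled by observing that $p_S^*(t)$ commutes past each $e_\rightarrow$ (again by $p_S\circ\pi_\pm=p_S$ and the projection formula) and is absorbed by the final $e_\downarrow=(p_+\times p_S)_*$ as multiplication by $t$ on the $S$-factor; summing yields exactly $(k-1)t\cdot\fq_k$, canceling the explicit correction term. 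What remains is $\sum_{j=1}^{k-1}e_\downarrow (e_\rightarrow)^{j-1}(p_-\times p_S)^*\fq_{k-j}$, where we have used $\fq_{k-j}=e_\downarrow(e_\rightarrow)^{k-j-1}e^\uparrow$ from Theorem \ref{thm:nakajima}.

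To conclude, we identify each summand with $\fq_j\fq_{k-j}|_\Delta$, i.e.\ we prove
\[
\fq_i\fq_j|_\Delta = e_\downarrow (e_\rightarrow)^{i-1}\circ (p_-\times p_S)^*\circ\fq_j \qquad (i+j=k).
\]
Writing $\fq_i$ in ``family form'' as an operator $A^*(\eHilb_{d+j}\times S_1) \to A^*(\eHilb_{d+k}\times S_2\times S_1)$, base change on the square relating the diagonal $\Delta\colon S\to S_1\times S_2$ with $(p_+\times p_S)\times\text{Id}_{S_1}$ rewrites $|_\Delta\circ\fq_i$ as $e_\downarrow\circ\Gamma^*\circ(e_\rightarrow\times\text{Id}_{S_1})^{i-1}\circ(e^\uparrow\times\text{Id}_{S_1})$, where $\Gamma$ is the graph of $p_S$. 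Propagating $\Gamma^*$ rightward through each $(e_\rightarrow\times\text{Id}_{S_1})$ by further base changes (using $p_S\circ\pi_\pm=p_S$ at every level) and finally collapsing $\Gamma^*\circ(e^\uparrow\times\text{Id}_{S_1}) = \bigl((p_-\times\text{Id}_{S_1})\circ\Gamma\bigr)^* = (p_-\times p_S)^*$ yields the desired identity, proving \eqref{eqn:aux 3}. The identity \eqref{eqn:aux 4} follows by the same argument with the roles of $\pi_\pm,p_\pm$ switched and careful tracking of the signs in $f_\downarrow,f_\leftarrow$. The main obstacle is this final identification: one has to keep track of the two distinct $S$-factors produced by $\fq_j$ and $\fq_i$ through a chain of base changes involving the graph $\Gamma$, where it is essential that the various $p_S$ maps at different levels of the nested Hilbert schemes are compatible under $\pi_\pm$.
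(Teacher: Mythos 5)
Your argument is correct and is essentially the paper's own proof read in the opposite direction: both reduce everything to Theorem \ref{thm:nakajima} together with the excess-intersection identity of Proposition \ref{prop:excess} applied at each junction, with the same telescoping over the position of the marked operator $\fr$ and the same cancellation of the $k-1$ copies of $t\,\fq_k$. Your final identification $\fq_i\fq_j|_\Delta = e_\downarrow\circ(e_\rightarrow)^{i-1}\circ(p_-\times p_S)^*\circ\fq_j$ via base change is exactly the step the paper compresses into the phrase ``formula \eqref{eqn:excess} translates into the identity $e_\rightarrow\circ e^\uparrow\circ e_\downarrow = e_\rightarrow^{(1)}\circ e_\rightarrow - e_\rightarrow\circ e_\rightarrow^{(1)} - t\, e_\rightarrow\circ e_\rightarrow$,'' so you have merely made explicit what the paper leaves implicit.
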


\begin{proof} We will only prove \eqref{eqn:aux 3}, as \eqref{eqn:aux 4} is analogous. By \eqref{eqn:factor alpha 1}, we have:
\begin{equation}
\label{eqn:boo}
\fq_i \fq_j \Big|_\Delta = e_\downarrow \circ \underbrace{e_\rightarrow \circ ... \circ e_\rightarrow}_{i-1 \text{ operators}} \circ e^\uparrow \circ e_\downarrow \circ \underbrace{e_\rightarrow \circ ... \circ e_\rightarrow}_{j-1 \text{ operators}} \circ e^\uparrow \Big|_\Delta
\end{equation}
Formula \eqref{eqn:excess} translates into the identity $e_\rightarrow \circ e^\uparrow \circ e_\downarrow = e_\rightarrow^{(1)} \circ e_\rightarrow - e_\rightarrow \circ e_\rightarrow^{(1)} - t e_\rightarrow \circ e_\rightarrow$, and therefore the right-hand side of \eqref{eqn:boo} equals:
$$
e_\downarrow \circ \underbrace{e_\rightarrow \circ ... \circ e_\rightarrow}_{i-1 \text{ operators}} \circ e^\uparrow \circ e_\downarrow \circ \underbrace{e_\rightarrow \circ ... \circ e_\rightarrow}_{j-1 \text{ operators}} \circ e^\uparrow = e_\downarrow \circ \underbrace{e_\rightarrow \circ ... \circ e^{(1)}_\rightarrow}_{i-1 \text{ operators}} \circ e_\rightarrow \circ \underbrace{e_\rightarrow \circ ... \circ e_\rightarrow}_{j-1 \text{ operators}} \circ e^\uparrow - 
$$
$$
- e_\downarrow \circ \underbrace{e_\rightarrow \circ ... \circ e_\rightarrow}_{i-1 \text{ operators}} \circ e_\rightarrow^{(1)} \circ \underbrace{e_\rightarrow \circ ... \circ e_\rightarrow}_{j-1 \text{ operators}} \circ e^\uparrow - t e_\downarrow \circ \underbrace{e_\rightarrow \circ ... \circ e_\rightarrow}_{i-1 \text{ operators}} \circ e_\rightarrow \circ \underbrace{e_\rightarrow \circ ... \circ e_\rightarrow}_{j-1 \text{ operators}} \circ e^\uparrow
$$
Restricting to $\Delta$ and summing the right-hand sides over all $i+j = k$ yields \eqref{eqn:aux 3}. 

\end{proof}

\begin{proof} \emph{of Theorem \ref{thm:lehn}:} We will first prove \eqref{eqn:degree}. Fix $n \in \BN$, and for any $k \in \{0,...,n\}$ denote by $S_k$ the composition below (notation as in \eqref{eqn:diagram z1} and \eqref{eqn:diagram z2}):
\begin{multline*}
A^*(\Hilb_n \times S) \xleftarrow{(p_+ \times p_S)_*} A^*(\Hilb_{n-1,n}) \xleftarrow{\pi_{+*} \pi_-^{*}}  ... \xleftarrow{\pi_{+*} \pi_-^{*}} \\
\xleftarrow{\pi_{+*} \pi_-^{*}} A^*(\Hilb_{n-k-1,n-k}) \xleftarrow{\pi_{-*} \pi_+^{*}} ... \xleftarrow{\pi_{-*} \pi_+^{*}}  A^*(\Hilb_{n-1,n}) \xleftarrow{p_+^*} A^*(\Hilb_n) 
\end{multline*}
If we apply \eqref{eqn:aux 1}, we obtain for all $k \geq 1$:
$$
S_k + S_{k-1} = e_\downarrow \circ \underbrace{e_\rightarrow \circ ... \circ e_\rightarrow}_{k-1 \text{ operators}} \circ e^\uparrow \circ (-1)^k f_\downarrow \circ \underbrace{f_\leftarrow \circ ... \circ f_\leftarrow}_{k-1 \text{ operators}} \circ f^\uparrow \Big|_\Delta = 
$$
\begin{equation}
\label{eqn:s k}
= (-1)^k \fq_{k} \fq_{-k} \Big|_\Delta
\end{equation}
where the last equality combines \eqref{eqn:factor alpha 1} and \eqref{eqn:factor alpha 2}. Meanwhile, $S_0 = (p_+ \times p_S)_*p_+^*$ and the projection formula together with \eqref{eqn:segre 1} implies that $S_0$ is equal to the usual pullback map $A^*(\Hilb_n) \rightarrow A^*(\Hilb_n \times S)$ followed by:
$$
\text{multiplication by } (p_+ \times p_S)_*(1) = \text{multiplication by } c_2 (\CI \otimes \CK^{-1}_S) =
$$
\begin{equation}
\label{eqn:s 0}
= \text{multiplication by } \ch_2 \left( \CO_{\CZ} \right)
\end{equation}
Taking the alternating sum of \eqref{eqn:s k} for all $k\geq 1$ with \eqref{eqn:s 0} yields precisely \eqref{eqn:degree}. \\

\noindent Now let us prove \eqref{eqn:lehn}. Fix $n \in \BN$, and for any $k \in \{0,...,n\}$ let us denote by $A_k$, $B_k$, $C_k$ the three compositions below (notation as in \eqref{eqn:diagram z1} and \eqref{eqn:diagram z2}):
\begin{multline*}
A^*(\Hilb_n \times S) \xleftarrow{(p_+ \times p_S)_*} A^*(\Hilb_{n-1,n}) \xleftarrow{\pi_{+*} \pi_-^{*}}  ... \xleftarrow{\pi_{+*} \pi_-^{*}} A^*(\Hilb_{n-k-1,n-k}) \xleftarrow{\fr} \\
\xleftarrow{\fr} A^*(\Hilb_{n-k-1,n-k}) \xleftarrow{\pi_{-*} \pi_+^{*}} ... \xleftarrow{\pi_{-*} \pi_+^{*}}  A^*(\Hilb_{n-1,n}) \xleftarrow{p_+^*} A^*(\Hilb_n) 
\end{multline*}
\begin{multline*}
A^*(\Hilb_n \times S) \xleftarrow{(p_+ \times p_S)_*} A^*(\Hilb_{n-1,n}) \xleftarrow{\fr} A^*(\Hilb_{n-1,n}) \xleftarrow{\pi_{+*} \pi_-^{*}}  ... \xleftarrow{\pi_{+*} \pi_-^{*}} \\
\xleftarrow{\pi_{+*} \pi_-^{*}} A^*(\Hilb_{n-k-1,n-k}) \xleftarrow{\pi_{-*} \pi_+^{*}} ... \xleftarrow{\pi_{-*} \pi_+^{*}}  A^*(\Hilb_{n-1,n}) \xleftarrow{p_+^*} A^*(\Hilb_n) 
\end{multline*}
\begin{multline*}
A^*(\Hilb_n \times S) \xleftarrow{(p_+ \times p_S)_*} A^*(\Hilb_{n-1,n}) \xleftarrow{\pi_{+*} \pi_-^{*}}  ... \xleftarrow{\pi_{+*} \pi_-^{*}} A^*(\Hilb_{n-k-1,n-k}) \xleftarrow{\pi_{-*} \pi_+^{*}} \\
 \xleftarrow{\pi_{-*} \pi_+^{*}} ... \xleftarrow{\pi_{-*} \pi_+^{*}}  A^*(\Hilb_{n-1,n}) \xleftarrow{\fr} A^*(\Hilb_{n-1,n}) \xleftarrow{p_+^*} A^*(\Hilb_n) 
\end{multline*}
If we apply \eqref{eqn:aux 2}, we obtain:
\begin{multline*}
A_k + A_{k-1} = - t S_k + e_\downarrow \circ \underbrace{e_\rightarrow \circ ... \circ e^{(1)}_\rightarrow}_{k-1 \text{ operators}} \circ e^\uparrow \circ (-1)^k f_\downarrow \circ \underbrace{f_\leftarrow \circ ... \circ f_\leftarrow}_{k-1 \text{ operators}} \circ f^\uparrow + \\  + e_\downarrow \circ \underbrace{e_\rightarrow \circ ... \circ e_\rightarrow}_{k-1 \text{ operators}} \circ e^\uparrow \circ (-1)^k f^{(1)}_\downarrow \circ \underbrace{f_\leftarrow \circ ... \circ f_\leftarrow}_{k-1 \text{ operators}} \circ f^\uparrow
\end{multline*}
while if we apply \eqref{eqn:aux 1}, we have:
$$
B_k + B_{k-1} = e_\downarrow^{(1)} \circ \underbrace{e_\rightarrow \circ ... \circ e_\rightarrow}_{k-1 \text{ operators}} \circ e^\uparrow \circ (-1)^k f_\downarrow \circ \underbrace{f_\leftarrow \circ ... \circ f_\leftarrow}_{k-1 \text{ operators}} \circ f^\uparrow
$$
$$
C_k + C_{k-1} = e_\downarrow \circ \underbrace{e_\rightarrow \circ ... \circ e_\rightarrow}_{k-1 \text{ operators}} \circ e^\uparrow \circ (-1)^k f_\downarrow \circ \underbrace{f_\leftarrow \circ ... \circ f^{(1)}_\leftarrow}_{k-1 \text{ operators}} \circ f^\uparrow
$$
The three relations above, together with \eqref{eqn:aux 3} and \eqref{eqn:aux 4}, yield:
\begin{multline}
-A_k-A_{k-1}+B_k+B_{k-1}+C_k+C_{k-1} = t S_k + (-1)^k \cdot \\
\left[ \sum_{i+j = k}^{i,j > 0} \fq_i\fq_j \fq_{-k} \Big|_\Delta + t \sum_{k=1}^\infty (k-1) \fq_k \fq_{-k} \Big|_\Delta + \sum_{i+j = k}^{i,j > 0} \fq_k\fq_{-i}\fq_{-j} \Big|_\Delta + t \sum_{k=1}^\infty (k-1) \fq_k \fq_{-k} \Big|_\Delta \right] \label{eqn:may 1}
\end{multline}
Meanwhile, $-A_0 + B_0 + C_0$ is equal to $(p_+ \times p_S)_* \circ \fr \circ p_+^*$, hence the projection formula together with \eqref{eqn:segre 1} implies that:
\begin{multline}
- A_0 + B_0 + C_0 = \text{multiplication by } (p_+ \times p_S)_*(c_1(\CL)) \stackrel{\eqref{eqn:segre 1}}= \\ = - \text{multiplication by } c_3 (\CI \otimes \CK^{-1}_S) = \text{multiplication by } 2 \ch_3 \left( \CO_{\CZ} \right) - t \ch_2 \left( \CO_{\CZ} \right) \label{eqn:may 2}
\end{multline}
Since multiplication by $\ch_2(\CO_\CZ)$ is $- \sum_{k = 1}^\infty \fq_k \fq_{-k} |_\Delta$ by \eqref{eqn:degree}, we may take the alternating sum of \eqref{eqn:may 1} for all $k\geq 1$ with \eqref{eqn:may 2} and obtain:
\begin{multline*}
\text{multiplication by } 2 \ch_3 \left( \CO_{\CZ} \right) + t \sum_{k = 1}^\infty \fq_k \fq_{-k} \Big|_\Delta  = t \sum_{k=1}^\infty (-1)^{k-1} S_k - \\
- \sum_{i+j = k}^{i,j > 0} \fq_i\fq_j \fq_{-k} \Big|_\Delta - t(k-1) \fq_k \fq_{-k} \Big|_\Delta - \sum_{i+j = k}^{i,j > 0} \fq_k\fq_{-i}\fq_{-j} \Big|_\Delta - t(k-1) \fq_k \fq_{-k} \Big|_\Delta
\end{multline*}
By combining \eqref{eqn:s k} and \eqref{eqn:s 0}, we obtain $\sum_{k=1}^\infty (-1)^{k-1} S_k =  \sum_{k=1}^\infty (k-1) \fq_k\fq_{-k}|_\Delta$. Therefore, the relation above implies:
$$
\text{multiplication by } 2 \ch_3 \left( \CO_{\CZ} \right) = - \sum_{i+j = k}^{i,j > 0} \fq_i\fq_j \fq_{-k} \Big|_\Delta - \sum_{i+j = k}^{i,j > 0} \fq_k\fq_{-i}\fq_{-j} \Big|_\Delta - tk \fq_k \fq_{-k} \Big|_\Delta
$$
Dividing by $2$ yields formula \eqref{eqn:lehn}. 

\end{proof}

\end{document}